\documentclass{article}
\usepackage{color}
\usepackage{amsfonts}
\usepackage{tikz}
\usepackage{tikz-feynman}
\usepackage{amssymb}
\usepackage{amsmath}
\usepackage{circuitikz}
\usepackage{amsthm}
\usepackage[mathscr]{euscript}
\usepackage{hyperref}
\usepackage{geometry}
\usepackage{comment}
\usepackage{float}  % 需要先引入这个包
\usetikzlibrary{matrix}%%使用矩阵

\usetikzlibrary{er,positioning}
\newtheorem{theorem}{Theorem}[section]
\newtheorem{corollary}{Corollary}[section]
\newtheorem{definition}{Definition}[section]
\newtheorem{lemma}{Lemma}[section]
\newtheorem{proposition}{Proposition}[section]
\newtheorem{remark}{Remark}[section]

\newtheorem{assump}{Assumption}[section]
\numberwithin{equation}{section}

\newcommand{\m}{\mathbb}
\newcommand{\ml}{\mathcal}
\newcommand{\mk}{\mathfrak}
\newcommand{\p}{\partial}

\makeatletter
\newcommand\RSloop{\@ifnextchar\bgroup\RSloopa\RSloopb}
\makeatother
\newcommand\RSloopa[1]{\bgroup\RSloop#1\relax\egroup\RSloop}
\newcommand\RSloopb[1]%
{\ifx\relax#1%
	\else
	\ifcsname RS:#1\endcsname
	\csname RS:#1\endcsname
	\else
	\GenericError{(RS)}{RS Error: operator #1 undefined}{}{}%
	\fi
	\expandafter\RSloop
	\fi
}
\newcommand\X{0}
\newcommand\RS[1]%
{\begin{tikzpicture}
		[every node/.style=
		{circle,draw,fill,minimum size=1.5pt,inner sep=0pt,outer sep=0pt},
		line cap=round
		]
		\coordinate(\X) at (0,0);
		\RSloop{#1}\relax

	\end{tikzpicture}
}
\makeatother
\newcommand\RSdef[1]{\expandafter\def\csname RS:#1\endcsname}
\newlength\RSu
\RSdef{i}{\draw (\X) -- +(90:\RSu) node{};}
\RSdef{l}{\draw (\X) -- +(135:\RSu) node{};}
\RSdef{r}{\draw (\X) -- +(45:\RSu) node{};}
\RSdef{I}{\draw (\X) -- +(90:\RSu) coordinate(\X I);\edef\X{\X I}}
\RSdef{L}{\draw (\X) -- +(135:\RSu) coordinate(\X L);\edef\X{\X L}}
\RSdef{R}{\draw (\X) -- +(45:\RSu) coordinate(\X R);\edef\X{\X R}}
\RSdef{n}{\draw node{};}
\RSdef{c}{\draw node[fill=none]{};}
\begin{document}
	\author{ShuoLin Zhang,\quad Zhaonan Luo,\quad Zhaoyang Yin}
	\title{A Generalized Framework for Singular Fractional Burgers Equations with Stochastic Forcing }
	\date{}
	\maketitle
	\tableofcontents
	\begin{abstract}
		In this paper, we investigate the regularization effect of fractional stochastic forcing on Burgers-type equations with fractional dissipation, with an application to the Degasperis--Procesi (DP) equation. In particular, we consider the perturbation induced by the singular noise $|D|^{1/2}\xi$ and establish local well-posedness in the negative Sobolev space $H^{-1/4+\delta}$ for some small $\delta>0$. Due to the singular nature of the nonlinear interactions, classical solution theories cannot be directly applied.
\par Inspired by the framework developed in \cite{hairer2013solving,gubinelli2017kpz}, we introduce a generalized solution theory based on an enhanced structure and derive the effective equations satisfied by these generalized solutions. Our main contribution is the establishment of a general framework for describing singular PDEs driven by rough data. Moreover, we prove the convergence of the associated non-Gaussian rough structures in the fractional dissipation setting. As an application, we apply this framework to the stochastic Degasperis--Procesi equation and obtain its local well-posedness in the low-regularity regime.
	\end{abstract}
\section{Introduction}
The Degasperis-Procesi (DP) equation 
\begin{equation}\label{eq;DP equation;no dissipative}
		\p_tm+u\p_xm+3\p_xum=0,\quad m(t,x)=(1-\partial_x^2)u(t,x),\quad u(0,x)=u_0(x).
	\end{equation}
or equivalently, in terms of the the fluid velocity $u(t,x):=g\ast m:=(1-\p_x^2)^{-1}m$ and rewrite the DP equation as
	\begin{equation}\label{eq;DP equation;general}
		\p_tu+\frac{1}{2}\p_x(u^2)+\frac{3}{2}(1-\partial_x^2)^{-1}\partial_x(u^2)=0,\quad u(0,x)=u_0(x).
	\end{equation}
   The Degasperis--Procesi (DP) equation was introduced by Degasperis and Procesi 
\cite{degasperis1999asymptotic,degasperis2002new}  through the method of asymptotic integrability. It serves as a model for nonlinear shallow water wave propagation and 
shares the same asymptotic accuracy as the Camassa--Holm equation \cite{CAMASSA19941}. 
 	 \par The following citations \cite{degasperis1999asymptotic,degasperis2002new} provide physical background for this class of equations; a more systematic discussion can be found in \cite{ai2010global}. From the mathematical perspective,  local well-posedness of \eqref{eq;DP equation;no dissipative} for initial data in $H^s$ with $s>\frac{3}{2}$ was established in \cite{liu2006global}, together with sharp blow-up criteria and finite-time singularity results in \cite{liu2007blow}.
\par In \cite{ai2010global}, Ai and Gui established the local well-posedness of equation \eqref{eq;DP equation;general} in the case $\gamma=2$, with solutions in $H^s$ for $s>0$. However, the extension to the negative regularity regime remains largely open, because the nonlinear transport term is no longer well-defined in the classical distributional framework.
The singular structure of the nonlinear transport term in the Degasperis--Procesi (DP) equation is essentially the same as that of the Burgers nonlinearity. This observation motivates us to develop an abstract framework for a broader class of fractional Burgers-type equations, within which the DP equation can be treated as a particular example.
\par One approach to studying solutions with negative regularity is provided by the theory of random initial data, which has received considerable attention in recent years; see, for instance, \cite{burqRandomDataCauchy2008a,burqRandomDataCauchy2008c,deng2022random,deng2024probabilistic,deng2024invariant}. More precisely, in \cite{chen2024probabilistic}, the first author, together with Chen and Gao, studied the DP equation and the fractional Burgers equation with initial data in \(H^{-\alpha}\), for \(\alpha>0\) and \(\gamma\in(5/4,3/2)\). In this paper, we present a different approach: the presence of suitable stochastic forcing enables us to establish well-posedness for the equation in distribution spaces of negative regularity. The key idea is to introduce an enhanced structure that captures the singular interactions generated by the noise and provides a meaningful interpretation of the nonlinear terms.
\par Before presenting our main results, we briefly review the role of noise in regularizing singular PDEs and the framework of SPDEs for handling ill-defined nonlinear interactions.
\subsection{Noise-induced regularization}
\par Deterministic partial differential equations are often derived under idealized assumptions that neglect unresolved microscopic fluctuations and external random influences. In many applications, such effects are neither negligible nor amenable to explicit modeling at the macroscopic level. A standard approach to incorporate such uncertainty is to introduce stochastic forcing terms, leading to stochastic partial differential equations (SPDEs) of the form
\begin{equation}
    \p_t u=Lu+\ml N(u)+\xi
\end{equation}
typically $\xi$ represents a space-time noise. From a mathematical standpoint, SPDEs extend the classical deterministic framework by allowing the driving terms to be generalized stochastic processes, thereby providing a rigorous setting for studying dynamics under random perturbations; see \cite{walsh2006introduction} and \cite{da2014stochastic}. One primary motivation for such models is that randomness may be interpreted as an effective description of unresolved degrees of freedom, consistent with the fluctuation–dissipation principle in statistical physics \cite{kubo1966fluctuation}.
\par In addition to their modeling relevance, stochastic perturbations can improve the analytical properties of the underlying equations. In various settings, noise has been shown to prevent finite-time blow-up, enhance well-posedness, or induce regularizing effects on solutions in ODEs and PDEs (see, e.g., \cite{catellier2016rough,catellier2016averaging,athreya2024well,flandoli2010well}).  Moreover, SPDEs often admit invariant measures and exhibit ergodic behavior, which allows for a probabilistic characterization of long-time dynamics beyond pointwise trajectories.
\par In the present paper,  the regularization effect refers not to a direct improvement of the regularity of solutions, but to the fact that stochastic forcing provides an enhanced structure allowing singular nonlinearities to be rigorously interpreted.  Consider the DP equation with fractional disspation driven by $|D|^{1/2}\xi$ on torus $\mathbb T=\mathbb R/2\pi\mathbb Z$ 
\begin{equation}\label{eq;DP;Burgers-like}
    \p_tu-\Lambda^\gamma u+\frac12\partial_x(u^2)
    +\frac32(1-\partial_x^2)^{-1}\partial_x(u^2)
    =|D|^{1/2}\xi,
    \qquad u(0,x)=u_0(x),
\end{equation}
for $\gamma>3/2$, where $\xi$ is a space-time white noise satisfying
\begin{equation}\label{eq;white noise}
\mathbb E[\xi(\omega,t,x)\xi(\omega,s,y)]
=\delta(t-s)\delta(x-y),
\end{equation}
for $t,s\geq0$ and $x,y\in\mathbb T$.
\par Remarkably, we show that the presence of space-time white noise provides a rigorous interpretation of the equation for a non-trivial class of initial data in negative Sobolev spaces $H^s$ beyond the deterministic well-posedness regime. The noise does not simply smooth the solution; rather, it generates an enhanced algebraic structure that allows us to interpret the singular nonlinear interactions and establish local well-posedness in a regime where the deterministic equation is not classically defined.
\par  A key feature of the DP equation is the presence of a nonlinear transport term, which shares the same singular structure as the Burgers nonlinearity. Therefore, instead of directly studying the DP equation, we first investigate  fractional singular Burgers equation
\begin{equation}
    \p_t u-\Lambda^\gamma u=\p_x(u)^2+|D|^{1/2}\xi,
\end{equation}
which captures the essential singular nonlinear mechanism of the DP equation. The well-posedness theory developed for the generalized Burgers framework can then be applied to the DP equation. 
\par Before presenting our main results, we first clarify the interpretation of standard deterministic PDEs driven by noise in both the physical and mathematical sense, and briefly recall how singularities are handled in the theory of SPDEs.
\subsection{Singularity in SPDEs}
\par The singularity of nonlinear terms in stochastic PDEs driven by rough noise has become a central difficulty in the theory of singular SPDEs. We begin with one of the most representative examples, the Kardar--Parisi--Zhang (KPZ) equation introduced in \cite{kardar1986dynamic},
\begin{equation}
    \partial_t h-\Delta h=(\partial_x h)^2+\xi ,
\end{equation}
which describes the evolution of randomly growing interfaces. Due to the irregularity of the solution, the nonlinear term $(\partial_x h)^2$ is not well-defined in the classical sense, making the equation ill-posed without further interpretation.
\par Over the past decades, various approaches have been developed to construct and understand solutions of the KPZ equation, including the Cole--Hopf transformation, energy solutions, and probabilistic approaches related to universality; see, for example, \cite{bertini1997stochastic,gonccalves2014nonlinear,alberts2014intermediate}. A decisive breakthrough was achieved by Hairer \cite{hairer2013solving}, who introduced the theory of regularity structures and provided a robust framework for singular stochastic PDEs through renormalization and fixed-point arguments. Related ideas were further developed through paracontrolled calculus and have been successfully applied to other singular equations, including Rough Burgers equations; see \cite{gubinelli2015paracontrolled,gubinelli2017kpz,catellier2018paracontrolled,gubinelli2023paracontrolled,hairerTheoryRegularityStructures2014}.
\par To briefly recall the development of singular KPZ-type equations, let 
$\xi_\epsilon$ be a smooth approximation of the space-time white noise $\xi$ obtained by convolution with a smooth mollifier,
\[
\xi_\epsilon=\rho_\epsilon *\xi,
\qquad 
\rho_\epsilon=\epsilon^{-1}\rho(\epsilon^{-1}\cdot).
\]
The corresponding renormalized KPZ equation is given by
\begin{equation}
    \partial_t h_\epsilon-\Delta h_\epsilon
    =(\partial_x h_\epsilon)^2-C_\epsilon+\xi_\epsilon ,
\end{equation}
where $C_\epsilon$ denotes the renormalization constant.
\par A fundamental contribution of Hairer was to provide a rigorous interpretation of the singular nonlinear term $(\partial_x h)^2$ by introducing an enhanced structure for the irregular component of the solution. More precisely, the solution is decomposed into a rough part and a regular remainder,
\[
h=\sum_{\tau\in\ml T'}c(\tau)X^\tau+v,
\]
where the collection $\{X^\tau\}$ encodes the singular interactions generated by the noise and $v$ represents the regular component. Under this framework, the ill-defined nonlinear equation can be transformed into a well-defined equation for the remainder term, which allows one to establish the convergence of the approximating solutions and the well-posedness of the limiting equation.
\par Following this idea, similar approaches have been developed for Rough Burgers equations. In particular, Gubinelli and Pekowoski \cite{gubinelli2017kpz} studied the Rough Burgers equation
\begin{equation}
    \partial_t u-\Delta u=\partial_x(u^2)+D\xi,
\end{equation}
as well as the KPZ equation on the torus $\mathbb T$. Moreover, the energy solution framework on $\mathbb R$ was further developed in \cite{gubinelli2018energy}.
\par Inspired by the theory developed for singular stochastic PDEs such as the KPZ equation, we aim to construct a general framework for nonlinear equations with rough stochastic forcing. Unlike regularity structures, our approach does not aim to construct a universal model space. Instead, we formulate a generalized equation directly driven by enhanced stochastic objects. In order to unify the treatment of the fractional Burgers equation and the DP equation, we introduce the Singular Fractional Burgers-type(SFB) equation
\begin{equation}\label{eq;generalized Burgers-type}
    \partial_t u-\Lambda^\gamma u=f(D)(u^2)+|D|^{1/2}\xi .
\end{equation}
The operator $f(D)$ represents a first-order Fourier multiplier and covers both the local transport operator in Burgers equations and the nonlocal transport operator appearing in the DP equation. The main result is stated as follows.
\subsection{Main results}
\begin{theorem}\label{th;main result}
	Let $\gamma\in(\frac{3}{2},2]$, $f(D)=g(D)D$ where $g(D)$ is a bounded Fourier multiplier of order zero. Consider the approximating equation
	\begin{equation}\label{eq;Burgers;app}
		\p_t u^\epsilon-\Lambda^\gamma u^\epsilon=f(D)(u^\epsilon)^2+\vert D\vert^{\frac{1}{2}}\xi_\epsilon,\quad u^\epsilon(0)=u^\epsilon_0,
	\end{equation}
	then for some small $\delta>0$ and a class of initial data $u_0\in W^{-\frac{1}{4}+\delta}$, there exists $T^*$ and $T_\epsilon$ satisfying $\lim_{\epsilon\to0}T_\epsilon= T^*>0$ such that $u\in C_{T^*}W^{-\frac{1}{4}+\delta}$ and  $\Vert u^\epsilon - u\Vert_{C_{T_\epsilon\wedge T^*}W^{-1/4+\delta}}\to 0$ as $\epsilon\to 0$ in probability sense. 
\end{theorem}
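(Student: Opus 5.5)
We follow the Da Prato--Debussche / Gubinelli--Perkowski strategy, adapted to fractional dissipation $\gamma\in(3/2,2]$. The solution is decomposed into a tree expansion plus a regular remainder. We then solve a well-defined fixed-point problem for the remainder, driven by finitely many stochastic objects, and prove convergence of those objects in probability.

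\textbf{Step 1: expansion.} Let $X^\epsilon=\int_{-\infty}^t e^{(t-s)\Lambda^\gamma}|D|^{1/2}\xi_\epsilon\,ds$ be the stationary linear solution, with the zero mode removed (equivalently, work with mean-zero data).

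Since $|D|^{1/2}\xi$ has space--time regularity $-1-$ and the semigroup gains $\gamma$, we get $X\in C_TC^{\gamma-3/2-}$. In the scaling of $W^{s,p}$ this is compatible with regularity $-1/4+\delta$ near $\gamma=3/2$; the regularity of $X$ improves as $\gamma$ grows.

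Next define the iterated objects
\[
X^{\RS{r}}=I(f(D)(X\diamond X)),\qquad X^{\RS{rl}}=I(f(D)(X X^{\RS{r}})),
\]
and so on, where $I$ is the Duhamel operator for $\partial_t-\Lambda^\gamma$. The product $X\diamond X$ is the Wick product. No renormalization constant appears in the equation, because $f(D)$ annihilates constants.

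We truncate the expansion at the order $n$ at which the remaining terms are classically well defined. Write $u=X+X^{\RS{r}}+\dots+v$, where $v$ solves
\[
\partial_t v-\Lambda^\gamma v=f(D)\big(2\textstyle\sum X^\tau v+v^2+\text{higher tree products}\big),\qquad v(0)=u_0-X(0)-\cdots.
\]
The ``class of initial data'' in Theorem \ref{th;main result} is taken to be $u_0=X(0)+(\text{tree terms at }0)+v_0$, with $v_0$ in a positive-regularity space.

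For $\gamma>3/2$ the products $X^\tau v$ should be classical once $\gamma$-smoothing puts $v$ in $C^{\beta}$ with $\beta+\mathrm{reg}(X)>0$. This holds because $2(\gamma-3/2)-1+\gamma>\tfrac{3}{2}-\gamma$ roughly, so a paracontrolled ansatz for $v$ ($v=v^\prime\prec X+v^\sharp$) should be needed only if a resonant term survives; in that case we add the resonant object $X^{\RS{r}}\circ X$ to the enhanced structure.

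\textbf{Step 2: deterministic fixed point.} Given the enhanced data $\mathbb X=(X,X^{\RS{r}},\dots)$ in the appropriate product of $C_TC^{\alpha_\tau}$ spaces, we use Schauder estimates for $e^{t\Lambda^\gamma}$ (gain of $\gamma$ derivatives with $t^{-\theta/\gamma}$ weights) and the paraproduct bounds. These show the map $v\mapsto\Phi(v)$ is a contraction on a ball of $C_TC^{\beta}$ (time-weighted near $0$) for $T\le T^*(\|\mathbb X\|,\|v_0\|)$. Local Lipschitz dependence of the solution on $(\mathbb X,v_0)$ gives continuity of the solution map $\mathcal S$. Since $\|\mathbb X^\epsilon\|\to\|\mathbb X\|$, the existence times $T_\epsilon$ (chosen from the norms) converge to $T^*$.

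\textbf{Step 3: stochastic convergence (main obstacle).} We must show $\mathbb X^\epsilon\to\mathbb X$ in $L^p(\Omega;C_TC^{\alpha_\tau})$. For each tree we compute Fourier-side second moments such as
\[
\mathbb E|\Delta_q X^{\tau}_\epsilon(t)|^2=\sum_{k_1+k_2=k}\frac{|g(k)k|^2\,|k_1||k_2|}{|k_1|^\gamma|k_2|^\gamma(|k_1|^\gamma+|k_2|^\gamma+|k|^\gamma)}\cdots,
\]
together with time-increment bounds. We then apply Gaussian hypercontractivity: the trees are finite Wiener chaoses, so they are non-Gaussian but live in finitely many chaoses, and Kolmogorov's criterion upgrades moment bounds to $C_TC^{\alpha}$. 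Convergence follows from the same bounds with a factor $\epsilon^{\kappa}$ coming from $|1-\hat\rho(\epsilon k)|\lesssim(\epsilon|k|)^\kappa$.

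The delicate part is the discrete convolution sums with fractional exponents. These are summable only because $\gamma>3/2$, and the threshold must be checked tree by tree, especially for the resonant/contracted terms, where possibly divergent constants must be shown to cancel by oddness of $f(D)$.

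Finally, convergence in probability of $u^\epsilon\to u$ on $[0,T_\epsilon\wedge T^*]$ follows by composing the continuous map $\mathcal S$ with the convergence in probability of $\mathbb X^\epsilon$.
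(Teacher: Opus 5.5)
Your overall route is the paper's. You expand $u=Y+X^{\RS{lr}}+2X^{\RS{rLlr}}+v$ (your $X+X^{\RS{r}}+2X^{\RS{rl}}+v$) and solve a contraction problem for $v$ driven by the enhanced data, with continuous dependence giving $T_\epsilon\to T^*$ (Theorem \ref{th;local result;gRBE;subcritical}). You then prove convergence of the finitely many trees in $L^2(\Omega;C_TW^\alpha)$ via second-moment bounds, hypercontractivity and Kolmogorov (Propositions \ref{prop;key computation}, \ref{prop;regularity;2 order}, \ref{prop;regularity;3 order}). The contracted pairing $P_3$ is handled there by exactly the odd-symbol cancellation you anticipate (Lemma \ref{le;package}), and you conclude by continuity of the solution map.

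However, the regularity count in your Step 1 is wrong, and the closing of the expansion rests on it. From $\mathbb E|\hat X(k)|^2\sim|k|^{1-\gamma}$, which is the covariance you yourself use in Step 3, one gets $X\in C_TW^{\gamma/2-1-}$ in both the H\"older and Sobolev scales. Its regularity therefore lies in $(-1/4,0]$ for $\gamma\in(3/2,2]$, and this, not a $W^{s,p}$ scaling effect, is where the exponent $-1/4+\delta$ in the theorem comes from. Your value $\gamma-3/2-$ would be positive for every $\gamma>3/2$. That would make $X^2$ classical and the tree expansion superfluous, contradicting your own use of the Wick product. Moreover, your check $2(\gamma-3/2)-1+\gamma>3/2-\gamma$ is equivalent to $\gamma>11/8$. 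So it says nothing about the threshold $3/2$, and leaves open (as you do) whether a paracontrolled ansatz is needed.

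With the correct value $\alpha=\gamma/2-1-$ and the gain $b=\gamma-1$ (Schauder gain $\gamma$ minus the derivative in $f(D)$), the bookkeeping is as follows. First, $X^{\RS{r}}\in C_TW^{2\alpha+b}$, i.e. regularity $2\gamma-3-$. The product $XX^{\RS{r}}$ requires $3\alpha+b>0$, which fails for $\gamma\le 8/5$, so the third tree $X^{\RS{rl}}$ is genuinely needed.

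After subtracting $X+X^{\RS{r}}+2X^{\RS{rl}}$, every remaining source term has regularity at least $\alpha-1$; this covers $f(D)(XX^{\RS{rl}})$, $f(D)((X^{\RS{r}})^2)$, $f(D)(Xv)$, $f(D)(v^2)$, and so on. Hence $v\in C_TW^s$ for any $s<\alpha+b$. The product $Xv$ is classical provided $s>-\alpha$, and $XX^{\RS{rl}}$ is classical because $X^{\RS{rl}}\in C_TW^{\alpha+b}$. Both conditions amount to $2\alpha+b=2\gamma-3>0$.

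Thus $\gamma>3/2$ is precisely the hypothesis $2\alpha+b>0$ of Theorem \ref{th;local result;gRBE;subcritical}. The expansion stops at three trees, and no paracontrolled ansatz or resonant object $X^{\RS{r}}\circ X$ is required. Once Step 1 is corrected in this way, your sketch matches the paper's proof.
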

\begin{remark}
The choice $g(D)=1$ gives the fractional singular Burgers equation. The choice $g(D)=-\frac12-\frac32(1-\partial_x^2)^{-1}$ recovers the stochastic Degasperis--Procesi equation. Therefore, the above theorem provides a unified framework for both equations.
\end{remark}
\par We point out that Jara and Gubinelli also consider the fractional Rough Burgers equation
\begin{equation}
    \p_t u-\Lambda^\gamma u=\p_x(u^2)+|D|^{\gamma/2}\xi.
\end{equation}
They prove the existence for $\theta>1/2$ by the regularization effect of noise and the uniqueness for $\theta>\frac{5}{4}$. In our setting, the noise is not such low regularity but the solution has uniqueness and continuously dependence. In fact, we restrict our attention to $\gamma>\frac
32$ in order to focus on the main ideas; the more singular case $\gamma\leq\frac{3}{2}$ requires additional tech such as the rough path theory or paracontrolled method and is not treated here. 
\par It should be noted that the solution $u$ constructed in Theorem \ref{th;main result} is not a classical solution to in the usual sense. Due to the singular nature of the nonlinear interactions, the generalized solution cannot be interpreted as satisfying the original equation directly. This naturally leads to the question of identifying the effective equation satisfied by $u$.  It can be seen as the Generalized of original equation, hence we called Generalized Singular fractional Burgers-type equation 
We introduce a method based on a fixed‑point argument for a generalized equation driven by enhanced data. The proof of Theorem \ref{th;main result} can be divided into two steps. In the first step, we construct a generalized rough Burgers equation and prove its well‑posedness. In the second step, we establish the convergence of the non‑Gaussian rough path to the enhanced data; the convergence in Theorem \ref{th;main result} then follows from a continuous dependence argument. Compared with a direct approach in \cite{gubinelli2017kpz}, our method offers two advantages: (1) it provides an equation satisfied by the generalized solution, and (2) it allows us to study the generalized solution directly using PDE theory, thereby reducing the complexity inherent in asymptotic analysis. For example, we prove an energy estimate, though at the cost of a high regularity requirement on $\gamma$.

\subsection{Organized of the paper}
\par This paper is organized as follows. In Section \ref{sec;basic tools}, we collects preliminary tools. In Section \ref{sec;GSFB} introduces the generalized framework. In Section \ref{sec;well-posedness} proves the well-posedness result. In Section \ref{sec;high order} establishes the convergence of stochastic enhanced data.
\section{Basic tools}\label{sec;basic tools}
We first recall some basic facts concerning Littlewood-Paley theory and Besov spaces. We introduce the nonhomogeneous Littlewood-Paley decomposition $\{\Delta_j\}_{j\geq -1}$ such that for any$u\in\mathcal S'(\mathbb T^d)$ (the same construction is valid on
$\mathbb R^d$), we have
\[
u=\sum_{j\geq -1}\Delta_j u .
\]

More precisely, the dyadic blocks $\{\Delta_j\}_{j\geq -1}$ are defined through the Fourier transform.
Let $\chi$ and $\rho$ be smooth radial functions taking values in $[0,1]$
such that
\[
\chi(k)+\sum_{j\geq0}\rho(2^{-j}k)=1,
\qquad k\in\mathbb Z^d,
\]
where $\chi$ is supported in a neighborhood of the origin and $\rho$ is supported in an annulus. Then we define
\[
\mathcal F(\Delta_{-1}u)(k)=\chi(k)\hat u(k),
\]
and
\[
\mathcal F(\Delta_j u)(k)=\rho(2^{-j}k)\hat u(k),
\qquad j\geq0 .
\]

For $s\in\mathbb R$ and $1\leq p,r\leq\infty$, the Besov space $B^s_{p,r}$ is defined by the norm
\[
\|u\|_{B^s_{p,r}}
=
\left(
\sum_{j\geq-1}
(2^{js}\|\Delta_j u\|_{L^p})^r
\right)^{1/r},
\]
with the usual modification when $r=\infty$. In particular, we define $\ml C^s=B_{\infty,\infty}^s$ for $s\in\m R$, then for $s\in(0,1)$, this space coincides with the classical Hölder space:
\[
\mathcal C^s
=
\left\{
f\in L^\infty:
\sup_{x\neq y}
\frac{|f(x)-f(y)|}{|x-y|^s}<\infty
\right\}.
\]
Throughout the paper, we use the notation
\[
W^s=\mathcal C^s\cap H^s,
\qquad s\in\mathbb R .
\]
For a Banach space $X$, we denote
\[
C_TX=C([0,T];X)
\]
with the norm
\[
\|f\|_{C_TX}
=
\sup_{t\in[0,T]}\|f(t)\|_X .
\]

For $s>0$, both $W^s$ and $C_TW^s$ are Banach algebras, namely,
\[
\|fg\|_{W^s}\lesssim \|f\|_{W^s}\|g\|_{W^s},\quad \Vert fg\Vert_{C_TW^s}\lesssim \Vert f\Vert_{C_TW^s}\Vert g\Vert_{C_TW^s}.
\]

We will repeatedly use the following estimate for heat flow.

\begin{lemma}[\cite{zhang2025resultsfractionalroughburgers}]
\label{le;est;heat flow}
Let $\gamma\geq1$, $\delta\in(0,\gamma)$ $\alpha\in\mathbb R$, and define $\mathcal L=\partial_t-\Lambda^\gamma$. Assume that $f(0)\in W^{\alpha+\delta} $, $\mathcal Lf\in W^\alpha$, then for $T\leq 1$, we have
\[
\|f(t)\|_{W^{\alpha+\delta}}
\leq
C\|f(0)\|_{W^{\alpha+\delta}}
+
Ct^{1-\frac{\delta}{\gamma}}
\|\mathcal Lf\|_{W^\alpha}.
\]
\end{lemma}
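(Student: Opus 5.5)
Write $f(t)=e^{t\Lambda^\gamma}f(0)+\int_0^t e^{(t-s)\Lambda^\gamma}\mathcal Lf(s)\,ds$ via Duhamel, where $e^{t\Lambda^\gamma}$ is understood as the dissipative semigroup associated with $\mathcal L$; on the torus this is a Fourier multiplier with symbol $e^{-t|k|^\gamma}$. The claim then reduces to two separate estimates, one for the free part and one for the Duhamel integral. Each will be proved for the $\mathcal C^s$ and the $H^s$ components of the norm, since $W^s=\mathcal C^s\cap H^s$. The right-hand side should be read with $\sup_{s\le t}\|\mathcal Lf(s)\|_{W^\alpha}$, i.e.\ the $C_tW^\alpha$ norm.

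\textbf{Free part.} Boundedness $\|e^{t\Lambda^\gamma}g\|_{W^{\beta}}\lesssim\|g\|_{W^\beta}$ uniformly in $t\ge0$ follows blockwise.
\begin{itemize}
\item In $H^\beta$ this is immediate from Plancherel, since $|e^{-t|k|^\gamma}|\le1$.
\item In $\mathcal C^\beta$ we use that $\Delta_j e^{t\Lambda^\gamma}=\tilde\Delta_j e^{t\Lambda^\gamma}\Delta_j$, with $\tilde\Delta_j$ a slightly enlarged block. The kernel of $\tilde\rho(2^{-j}\cdot)e^{-t|\cdot|^\gamma}$ has $L^1$ norm bounded uniformly in $j,t$: rescale to $\tilde\rho(k)e^{-t2^{j\gamma}|k|^\gamma}$ and note that its derivatives on the annulus are uniformly bounded, because $\lambda^m e^{-\lambda}$ is bounded. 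For $j=-1$ the same holds trivially.
\end{itemize}

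\textbf{Smoothing estimate.} The key step is the blockwise bound
\[
\|\Delta_j e^{t\Lambda^\gamma}g\|_{L^p}\lesssim e^{-ct2^{j\gamma}}\|\Delta_jg\|_{L^p},\qquad p\in\{2,\infty\},\ j\ge0.
\]
To prove it, write $e^{-t|k|^\gamma}=e^{-ct|k|^\gamma}\cdot e^{-(1-c)t|k|^\gamma}$ and apply the same uniform-$L^1$-kernel argument to $e^{ct2^{j\gamma}}\tilde\rho(2^{-j}k)e^{-t|k|^\gamma}$. Since $\gamma$ may be non-integer, $|k|^\gamma$ is only smooth away from $0$; this is why only the annulus blocks get decay, which is harmless. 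This heat-kernel bound is the only genuinely technical step, and I expect it to be the main obstacle, though it is standard (Bahouri--Chemin--Danchin, Lemma 2.4 type).

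Given the bound, we get
\[
2^{j(\alpha+\delta)}\|\Delta_j e^{(t-s)\Lambda^\gamma}h\|_{L^p}\lesssim \big(2^{j\delta}e^{-c(t-s)2^{j\gamma}}\big)\,2^{j\alpha}\|\Delta_jh\|_{L^p}\lesssim (t-s)^{-\delta/\gamma}\|h\|_{B^\alpha_{p,\infty}},
\]
using $\sup_{x>0}x^{\delta}e^{-c(t-s)x^\gamma}\lesssim (t-s)^{-\delta/\gamma}$. The block $j=-1$ contributes at most $\|h\|$, which is $\le (t-s)^{-\delta/\gamma}\|h\|$ since $T\le1$.

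\textbf{Integration and the $H^s$ case.} Integrating in $s$ and using $\delta<\gamma$ gives
\[
\int_0^t(t-s)^{-\delta/\gamma}\,ds=\tfrac{\gamma}{\gamma-\delta}\,t^{1-\delta/\gamma},
\]
which is the claimed $\mathcal C^{\alpha+\delta}$ bound. For $H^{\alpha+\delta}$ the square sum over $j$ does not commute with the supremum over $x$. We handle this with the multiplier inequality $|k|^{2\delta}e^{-2(t-s)|k|^\gamma}\lesssim (t-s)^{-2\delta/\gamma}$ directly in Fourier, giving $\|e^{(t-s)\Lambda^\gamma}h\|_{H^{\alpha+\delta}}\lesssim (t-s)^{-\delta/\gamma}\|h\|_{H^\alpha}$. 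Minkowski's inequality in time then concludes.

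Finally, combine the two components to get the $W$ norms. Continuity in time, if needed, follows by approximation.
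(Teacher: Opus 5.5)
Your proof is correct. The paper gives no argument of its own for this lemma; it is quoted from \cite{zhang2025resultsfractionalroughburgers}. Your route is the standard proof of this Schauder-type estimate: Duhamel's formula, uniform boundedness of $e^{t\Lambda^\gamma}$ on dyadic blocks, the blockwise decay $e^{-ct2^{j\gamma}}$ turned into the factor $(t-s)^{-\delta/\gamma}$, integrability from $\delta<\gamma$, and a direct Fourier-side treatment of the $H^{\alpha+\delta}$ component, where $T\le 1$ absorbs the $k=0$ mode. The one step left implicit is that, on $\mathbb T$, the uniform $L^1$ kernel bounds obtained by rescaling follow from the corresponding bounds on $\mathbb R$ by periodization (Poisson summation).
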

Let $g(D)$ be a Fourier multiplier operator defined by
\[
\widehat{g(D)u}(k)=g(k)\hat u(k),
\qquad k\in\mathbb Z.
\]
We assume that $g(D)$ is a zero-order Fourier multiplier satisfying the following properties
\begin{assump}\label{ass:g}
The Fourier symbol $g(k)$ satisfies
\[
|g(k)|\lesssim 1,\qquad k\in\mathbb Z .
\]
and $g(D)$ is bounded on $W^s$
\end{assump}
We now verify that the operators arising from the Burgers equation and the Degasperis--Procesi equation satisfy Assumption \ref{ass:g}. For the Burgers equation, we have \(g(D)=1\), so the assumption is trivially satisfied. For the DP equation, the corresponding operator is $g(D)=-\frac{1}{2}-\frac{3}{2}(1-D^2)^{-1}.$ The boundedness of \(g(D)\) on Sobolev spaces \(H^s\) is standard. To prove its boundedness on \(\ml C^s\), notice that

$$
(1-D^2)^{-1}u=K*u,
$$

where

$$
K(x)=\sum_{k\in\mathbb Z}\frac{e^{ikx}}{1+k^2}\in L^1(\mathbb T).
$$

Therefore, for every dyadic block \(\Delta_j u\), Young's convolution inequality yields

$$
\begin{aligned}
\|(1-D^2)^{-1}\Delta_j u\|_{L^\infty}
&=\|K*\Delta_j u\|_{L^\infty}\\
&\lesssim \|K\|_{L^1}\|\Delta_j u\|_{L^\infty}
\lesssim \|\Delta_j u\|_{L^\infty}.
\end{aligned}
$$

Multiplying by \(2^{js}\) and taking the supremum over \(j\), we obtain

$$
\|(1-D^2)^{-1}u\|_{\ml C^s}
\lesssim
\|u\|_{\ml C^s}.
$$

Thus, \(g(D)\) is bounded on \(\ml C^s\). Combining this with its boundedness on \(H^s\) and the relation$W^s=\ml C^s\cap H^s,$ we conclude that \(g(D)\) is bounded on \(W^s\). Hence, the operator associated with the DP equation satisfies Assumption \ref{ass:g}.
\subsection{Bony's para-product}\label{subsec;Littlewood-Paley}
Let $f$ and $g$ belongs to $\ml S'$, by Littlewood-Paley decomposition, it can be present as 
$$f=\sum_{j\geq -1} \Delta_j g\quad and \quad g=\sum_{j\geq -1} \Delta_j g$$
where $\Delta_j$ is a Fourier operator which is support on $2^j\ml C$ for some fixed annual $\ml C$ and any $j\geq 0$ , and support on a Ball for $j=-1$, see more details in \cite{bahouri2011fourier,zhang2025resultsfractionalroughburgers}. Then we define the \textbf{nonhomogeneous paraproduct} between $f$ and $g$ as 
\begin{equation}
	f\prec g=\sum_{j}S_{j-1} f\Delta_jg,
\end{equation}
and the nonhomogeneous remainder of $f$ and $g$ as 
\begin{equation}
	f\circ g=\sum_{\vert i-j\vert\leq 1}\Delta_if \Delta_jg.
\end{equation}
Let $f\succ g=g\prec f$, then its easy to check that $fg$ has following decomposition
\begin{equation}
	fg= f\prec g+f\succ g+f\circ g.
\end{equation}

\begin{lemma}[\cite{bahouri2011fourier}]\label{le;est;paraproduct}\label{le;est;paraproduct;modify}
	Define $f\prec g:=\sum_{j}S_{j-1}f\Delta_jg$, $f\succ g=g\prec f$, $f\circ g:=\sum_{\vert k-j\vert\leq 1}\Delta_k f\Delta_j g$, for all $k\in \mathbb{N}$, we have following estimate
	\begin{itemize}
		\item{1.  $$\Vert f\prec g\Vert_{W^s}\leq C\Vert f\Vert_{L^\infty}\Vert D^k g\Vert_{W^{s-k}}.$$}
		\item{2. If $t<0$ and $\frac{1}{r}=\min(1,\frac{1}{r_1}+\frac{1}{r_2})$, then $$\Vert f\prec g\Vert_{W^{s+t}}\leq C\Vert f\Vert_{B_{\infty,r_1}^t}\Vert D^k g\Vert_{W^{s-k}}.$$}
		\item{3. If $\frac{1}{p}=\frac{1}{p_1}+\frac{1}{p_2}\leq1$, $\frac{1}{r}=\frac{1}{r_1}+\frac{1}{r_2}\leq1$ and $s=s_1+s_2>0$, then
			$$\Vert f\circ g\Vert_{W^{s}}\leq C\Vert f\Vert_{W^{s_1}}\Vert g\Vert_{W^{s_2}}.$$}
	\end{itemize}
\end{lemma}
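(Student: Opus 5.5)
The plan is to prove the three estimates of Lemma \ref{le;est;paraproduct} blockwise: first in the H\"older component $\ml C^s=B^s_{\infty,\infty}$ and then in the Sobolev component $H^s=B^s_{2,2}$. Since $W^s=\ml C^s\cap H^s$ carries the sum of the two norms, the $W^s$ bounds follow by adding the two. Throughout I use the Fourier support properties of Section \ref{subsec;Littlewood-Paley}. First, $S_{j-1}f\,\Delta_j g$ has spectrum in a dilated annulus $2^j\tilde{\ml C}$, so only $|j'-j|\le N_0$ contribute to $\Delta_{j'}(f\prec g)$. Second, $\Delta_k f\Delta_j g$ with $|k-j|\le1$ has spectrum in a ball $2^{j}B$, so $\Delta_{j'}(f\circ g)$ only sees $j\ge j'-N_0$. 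I also use Bernstein's inequality $\|\Delta_j D^k g\|_{L^p}\sim 2^{jk}\|\Delta_j g\|_{L^p}$ for $j\ge0$, with the $j=-1$ block handled by the inhomogeneous convention.

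For item 1, I bound $\|\Delta_{j'}(S_{j-1}f\Delta_j g)\|_{L^p}\le \|S_{j-1}f\|_{L^\infty}\|\Delta_jg\|_{L^p}\lesssim\|f\|_{L^\infty}\|\Delta_jg\|_{L^p}$ for $p=2$ and $p=\infty$. Multiplying by $2^{j's}\sim 2^{js}$ and summing over the finitely many $j$ near $j'$ gives $\|f\prec g\|_{B^s_{p,p}}\lesssim\|f\|_{L^\infty}\|g\|_{B^s_{p,p}}$ (the $\ell^2$ sum for $p=2$, the supremum for $p=\infty$). I then replace $\|g\|_{B^s_{p,p}}$ by $\|D^kg\|_{B^{s-k}_{p,p}}$ via Bernstein, interpreting $D^k$ with the low-frequency block controlled in the usual way.

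Item 2 is identical except that $\|S_{j-1}f\|_{L^\infty}\le\sum_{i\le j-2}\|\Delta_i f\|_{L^\infty}\lesssim 2^{-jt}\|f\|_{B^t_{\infty,r_1}}$; this is where $t<0$ is used, since the geometric sum $\sum_{i\le j}2^{-it}$ is dominated by its top term. This yields the loss $2^{-jt}$ and hence the index $s+t$. In the $H^s$ part I use $\ell^{r_1}\subset\ell^\infty$ and Young/H\"older in $\ell^r$ over $j$, which is where the relation $\frac1r=\min(1,\frac1{r_1}+\frac1{r_2})$ enters.

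For item 3, write $\Delta_{j'}(f\circ g)=\sum_{j\ge j'-N_0}\Delta_{j'}(\Delta_jf\,\tilde\Delta_jg)$ with $\tilde\Delta_j=\Delta_{j-1}+\Delta_j+\Delta_{j+1}$. H\"older gives $\|\Delta_jf\tilde\Delta_jg\|_{L^p}\le\|\Delta_jf\|_{L^{p_1}}\|\tilde\Delta_j g\|_{L^{p_2}}\lesssim 2^{-js}c_j$, where $c_j$ lies in $\ell^{r}$ by H\"older in $\ell^{r_1},\ell^{r_2}$. Then $2^{j's}\|\Delta_{j'}(f\circ g)\|_{L^p}\lesssim\sum_{j\ge j'-N_0}2^{(j'-j)s}c_j$, and since $s>0$ this is a convolution with an $\ell^1$ kernel, so Young's inequality in $\ell^r$ closes the estimate. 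To land in $W^s$ I take, for the $\ml C^s$ part, $p_1=p_2=\infty$ with $f\in\ml C^{s_1},g\in\ml C^{s_2}$. For the $H^s$ part I take $(p_1,p_2)=(2,\infty)$ and use $H^{s_1}\times\ml C^{s_2}$. This is admissible because the $W$-norms control both components, and $B^s_{2,2}=H^s$ with $r_1=2,r_2=\infty$.

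The main obstacle is not analytic but bookkeeping. The statement mixes $W$-norms with Besov indices $p_i,r_i$, so I must choose the exponent pairings carefully so that each $H^s$ bound uses one $L^2$ factor and one $L^\infty$ factor. The second delicate point is the low-frequency block in the $D^k$ formulation, where $D^k$ is not invertible; there I simply bound $\Delta_{-1}$ terms directly by $\|g\|$ in a lower norm, which is harmless on the torus.
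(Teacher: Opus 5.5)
Your proposal is correct and follows essentially the same route as the paper, which gives no argument of its own but cites the Bahouri--Chemin--Danchin paraproduct and remainder theorems; you reproduce those proofs and apply them separately to the $H^s=B^s_{2,2}$ and $\mathcal C^s=B^s_{\infty,\infty}$ components of $W^s$, using the $H^{s_1}\times\mathcal C^{s_2}$ pairing for the $L^2$ part of item 3. One correction to your low-frequency remark: the block $\Delta_{-1}g$ never enters $f\prec g$, because it is multiplied by $S_{-2}f=0$, so only annular blocks appear and Bernstein gives the $D^k$ form directly; bounding $\Delta_{-1}g$ by $\|g\|$ in a lower norm would instead add a term (e.g.\ the zero mode of $g$) that $\|D^kg\|_{W^{s-k}}$ cannot control.
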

\subsection{Contraction mapping theorem}
Next we introduce a generalized contraction mapping theorem which compared with Lemma 5.5($ L=0$) in \cite{bahouri2011fourier}.
\begin{lemma}\label{le;Picard th}\label{le;continuous dependence}
	Let $E$ be a Banach space, and $\ml B$ a continuous bilinear map from $E\times E$ to $E$, and $r$ a positive real number such that
	$$r<\frac{1}{8\Vert \ml B\Vert}\quad with \quad\Vert \ml B\Vert=\sup_{\Vert u\Vert_E,\Vert v\Vert_E\leq1}\Vert \ml B(u,v)\Vert_E.$$ 
	Let $ L (x)$ is a linear map $E$ to $E$ and satisfied 
    \begin{equation}\label{eq;condition L}
     \Vert L\Vert_{E\to E}=\sup_{\Vert v\Vert_E\leq 1}\Vert L(v)\Vert_E<\frac{1}{4}.
    \end{equation}
	Then for any $\Vert a\Vert_E<r$, there exists a unique $\tilde x$ in $\ml B(0,2r)$ such that 
	$$\tilde x=a+ L(\tilde x)+\ml B(\tilde x,\tilde x).$$
    Moreover, if there exists $a_n$ and $L_n$ satisfying
       \begin{equation}\label{eq;an,Ln}
        \Vert a_n-a\Vert_E\to 0,\quad \Vert L_n-L\Vert_{E\to E}\to0,\quad as\: n\to \infty.
    \end{equation}
    then for $x_n=a_n+L_n(x)+\ml B(x_n,x_n)$, there exists $N\in \m N$ such that for any $n>N$, we have
    \begin{equation}
        \Vert x_n-x\Vert_E\leq 4\Vert a_n-a\Vert_E+8r\Vert L_n-L\Vert_{E\to E}.
    \end{equation}
    \begin{proof}
        The first result is from \cite{zhang2025resultsfractionalroughburgers}, for the second result, by \eqref{eq;an,Ln}, there exists $N\in\m N$, such that for $n>N$, $\Vert L_n\Vert_{E\to E}<1/4$, then by the first results, there exists a unique $x_n$ in $B(0,2r)$ satisfying $x_n=a_n+L_{n}(x_n)+\ml B(x_n,x_n)$. Moreover, we have
       \begin{align*}
            \Vert x_n-x\Vert_E< \Vert a_n-a\Vert_E+\frac{3}{4}\Vert x_n-x\Vert_E+2r\Vert L_n-L\Vert_{E\to E}.
        \end{align*}
        then we have $\Vert x_n-x\Vert_E\leq 4\Vert a_n-a\Vert_E+8r\Vert L_n-L\Vert_{E\to E}.$
    \end{proof}
\end{lemma}
\section{Generalized Singular Fractional Burgers-type equation}\label{sec;GSFB}
\par In this section, we introduce a Generalized  Singular Fractional Burgers-type equation. 
\subsection{Triple $(V,\ml T,b)$}
We first introduce an index set $\ml T$ that describes the algebraic structure of the singular expansion.
\begin{definition}\label{def;regular sequence}
Let
\[
\ml T(\tau^*):=
\{\tau_1\cdot\tau_2:\tau_1,\tau_2\in\ml T\}
\cup\{1,\tau^*\},
\]
where $\ml T(\tau^*)$ is equipped with a commutative multiplication operation $\cdot$.

\end{definition}

For each $\tau\in\ml T$, we associate a stochastic process $X^\tau$. To characterize the regularity of these processes, we introduce a family of Banach spaces
$\{V_\eta\}_{\eta\in\mathbb R}$ and establish a correspondence between the index set $\ml T$ and the regularity scale. In particular, we assume that
\[
X^{\tau^*}\in V_\eta .
\]
To describe the regularity of the higher-order objects $X^\tau$, we introduce a function
\[
r:\ml T\rightarrow\mathbb R, r(\tau^*)=\alpha,
\]
which satisfies the recursive relation
\begin{equation}\label{eq;r}
    r(\tau_1\cdot\tau_2)
=
\min
\big(
r(\tau_1),
r(\tau_2),
r(\tau_1)+r(\tau_2)
\big)+b,
\end{equation}
namely, $b$--order regular function on $\ml T$. Here $b>0$ represents the smoothing effect arising from the expansion and plays an essential role in ensuring that the recursive construction is well defined.It is not difficult to observe the following two facts.
\begin{proposition}\label{prop;tau}
Let $b>0$, then 
    \begin{itemize}
        \item (1) For any $\tau\in\ml T$, we have $r(\tau)$ is determined by $r(\tau^*)=\alpha$. 
        \item (2) For any $\tau\in\ml T$ and $b+r(\tau^*)>0$, we have $r(\tau)\geq r(\tau^*)$, the inequality is hold if and only if $\tau=\tau^*$.
    \end{itemize}
\end{proposition}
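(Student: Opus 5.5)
Both parts will be proved by induction on the construction of $\ml T=\ml T(\tau^*)$. The set $\ml T$ is the smallest set containing $1$ and $\tau^*$ and closed under the product $\cdot$. So every element is $1$, $\tau^*$, or a formal product $\tau_1\cdot\tau_2$ built in finitely many steps. We assign each $\tau$ a generation $n(\tau)$: set $n(1)=n(\tau^*)=0$, and let $n(\tau)$ be one plus the minimal maximum of $n(\tau_1),n(\tau_2)$ over representations $\tau=\tau_1\cdot\tau_2$. We then induct on $n(\tau)$. The value $r(1)$ is not fixed by \eqref{eq;r}, so I take the natural convention $r(1)=0$, since $1$ plays the role of a constant. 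I would state explicitly that products involving $1$ are either excluded or identified via $1\cdot\tau=\tau$, since otherwise \eqref{eq;r} would give $r(\tau)=\min(0,r(\tau))+b$.

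\textbf{Part (1).} The base cases are $r(1)=0$ and $r(\tau^*)=\alpha$. For the inductive step, if $\tau=\tau_1\cdot\tau_2$ with $n(\tau_i)<n(\tau)$, then $r(\tau_1)$ and $r(\tau_2)$ are already determined by $\alpha$ (and $b$), so \eqref{eq;r} determines $r(\tau)$. The one subtlety is well-definedness when $\tau$ has several factorizations. I would handle this by viewing $\ml T$ as the free commutative magma generated by $\tau^*$, that is, unordered binary trees with leaves $\tau^*$. In that setting each non-leaf $\tau$ has a unique unordered factorization $\{\tau_1,\tau_2\}$, and commutativity is harmless because the right side of \eqref{eq;r} is symmetric. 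This consistency point is the main obstacle, and I would resolve it by this interpretation of Definition \ref{def;regular sequence}. More precisely, $r$ is then a function of $\alpha$ and $b$ only, and of nothing else in the data.

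\textbf{Part (2).} Assume $\alpha+b>0$ and prove the stronger claim that $r(\tau)\ge\alpha+b>\alpha$ for every $\tau\neq\tau^*$ built as a product. For $\tau=\tau_1\cdot\tau_2$ the inductive hypothesis gives $r(\tau_i)\ge\alpha$, since either $\tau_i=\tau^*$ or $r(\tau_i)\ge\alpha+b>\alpha$. We split into cases by the sign of $\alpha$.

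If $\alpha\ge0$, then $r(\tau_1)+r(\tau_2)\ge\min(r(\tau_1),r(\tau_2))$. The minimum in \eqref{eq;r} is therefore at least $\alpha$, and $r(\tau)\ge\alpha+b$.

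If $\alpha<0$, the minimum could be the sum, which is at least $2\alpha$. The naive bound then only gives $r(\tau)\ge2\alpha+b$, and $2\alpha+b>\alpha$ is equivalent to $\alpha+b>0$, which is exactly the hypothesis. So $r(\tau)>\alpha$ again. To keep the induction closed I would carry the invariant $r(\tau)\ge\min(\alpha,2\alpha+b)=\alpha$ in this case, using $\alpha+b>0$. The sum $r(\tau_1)+r(\tau_2)\ge2\alpha$ gives $r(\tau)\ge2\alpha+b>\alpha$ whenever the minimum is the sum, and otherwise $r(\tau)\ge\alpha+b>\alpha$.

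For the element $1$, $r(1)=0>\alpha$ when $\alpha<0$. When $\alpha\ge0$, the comparison with $1$ depends on the convention, and I would note that $1$ is excluded from the ``only if'' claim or treated separately. Combining the cases, $r(\tau)\ge\alpha$ with equality only at $\tau=\tau^*$.
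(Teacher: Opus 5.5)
The paper gives no proof of this proposition; it is introduced with ``It is not difficult to observe''. Your structural induction therefore supplies the missing argument, and in substance it is correct.

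Reading $\ml T$ as the free commutative magma on $\tau^*$ (unordered binary trees, matching the paper's tree notation) is not just convenient but forced. If $\cdot$ were associative, $r$ would be ill-defined: already for $\alpha=0$ one gets $r\big((\tau^*\cdot\tau^*)\cdot(\tau^*\cdot\tau^*)\big)=2b$ but $r\big(((\tau^*\cdot\tau^*)\cdot\tau^*)\cdot\tau^*\big)=b$.

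The nearest argument in the paper is the descending-chain proof of Lemma~\ref{le;r;1}. Your invariant yields that lemma in one line: for $\alpha<0$, every product has $r\ge 2\alpha+b$. It also avoids the paper's unargued claim that the chain ends with $\tau^{N-1}=\tau^*\cdot\tau^*$, when a priori the last step is $\tau^*\cdot\sigma$ for an arbitrary $\sigma$.

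Two things need tidying.

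First, the ``stronger claim'' that opens Part (2), $r(\tau)\ge\alpha+b$ for every product, is false when $\alpha<0$, because $r(\tau^*\cdot\tau^*)=2\alpha+b<\alpha+b$. So the inductive hypothesis ``$\tau_i=\tau^*$ or $r(\tau_i)\ge\alpha+b$'' cannot be carried. You effectively notice this and switch to the invariant $r\ge\alpha$ in the $\alpha<0$ paragraph, and that is all you ever use. State it from the outset; the case split then disappears:
\[ r(\tau_1\cdot\tau_2)\ge\min(\alpha,2\alpha)+b=\alpha+b+\min(\alpha,0)>\alpha, \]
where the last inequality is exactly $b>0$ together with $\alpha+b>0$.

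Second, the element $1$. The inconsistency $r(\tau)=\min(0,r(\tau))+b$ that you cite arises precisely under the identification $1\cdot\tau=\tau$, not in its absence. Your sentence has this reversed. Identification is acceptable only if such products are also exempted from \eqref{eq;r}.

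Keeping $1$ as a free generator with $r(1)=0$ does not rescue (2) either. For $\alpha\ge0$, statement (2) fails at $\tau=1$ itself; when $\alpha>0$ it is the inequality that fails, not just the ``only if''. It also fails at $1\cdot\tau^*$ whenever $\alpha>b$, since $r(1\cdot\tau^*)=b$.

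The clean fix is to drop $1$ from $\ml T$, and hence from the scope of (2). This costs nothing, since no regular subset $\ml T'$ used later in the paper contains it.
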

We summarize above definition together
\begin{definition}
	Define a regular triple $(V,\ml T,r)$, where $\ml T$ satisfied Definition \ref{def;regular sequence} and $b>0$, let $r$ be a $b$-order regular function on $\ml T$. Let $V:\eta\mapsto V_\eta$ where $\{V_\eta\}_{\eta\in\m R}$ is a sequence of Banach space satisfied for $\eta\leq \bar\eta$, $V_{\bar\eta}\subset V_\eta$. Then we say that $X$ is adapted to the triple $(V,\ml T,b)$, if following two properties is satisfied 
	 \begin{itemize}
		\item[(1)]{$X:\ml T\mapsto \cup_{\alpha}V_\alpha$.}
		\item[(2)]{$X^\tau:=X(\tau)\in V_{r(\tau)}$.}
	\end{itemize}
\end{definition}
\subsection{Enhanced data}
 Let $c:\ml T\to \m N_+$ such that $c(\tau^*)=1$ and $c(\tau)=\sum_{\tau_1,\tau_2\in\ml T}\textbf{1}_{\tau_1\cdot\tau_2=\tau}c(\tau_1)c(\tau_2)$,  we firstly define a ``closed'' regular subset of $\ml T$ associated with regular function $r_b$.
 \begin{definition}
     Let $r_{\alpha,b}$ be a $b$-order regular function with $r_{\alpha,b}(\tau^*)=\alpha$. We say that $\ml T'(\tau^*)$ is a regular subset of $\ml T(\tau^*)$  associated with regular function $r_{\alpha,b}$  if it satisfies the following properties:
\begin{itemize}
    \item {$\tau^*\in\ml T'(\tau^*)$.}
    \item {There exists an ordering on $\ml T'(\tau^*)$ such that every $\tau_k\in\ml T'(\tau^*)\backslash \{\tau^*\}$ can be represented as
    \[
    \tau_k=\tau_1\cdot\tau_2,
    \]
    where $\tau_1,\tau_2$ belong to $\{\tau_i\}_{i<k}$.}
    \item {If $\tau_1,\tau_2\in\ml T'$ and $r_{\alpha,b}(\tau_1)+r_{\alpha,b}(\tau_2)<0$, then we have $\tau_1\cdot\tau_2\in\ml T'$}
\end{itemize}
 \end{definition}
 Assume that $\ml T'$ is a regular subset of $\ml T$, $u=\sum_{\tau\in \ml T'}c(\tau)X^\tau+v$, then the nonlinearity $f(D)(u^2)$ can be expressed as 
\begin{align*}
   f(D)(\sum_{\tau,\tau'\in \ml T'}c(\tau)c(\tau')X^\tau X^{\tau'}+2v\sum_{\tau\in \ml T'}c(\tau)X^\tau+v^2).
\end{align*}
We firstly assume that $v\cdot\sum_{\tau\in \ml T'}c(\tau)X^\tau$ is always well-defined. With the aid of the function  $r$, we split the set  $(\tau,\tau')\in\ml T'\times\ml T'$ into two parts, one consists of those pairs satisfying  $r(\tau)+r(\tau')\geq0$ which we denote by
\begin{equation}
    \ml R=\{(\tau_1,\tau_2)\::\tau_1,\tau_2\in\ml T'(\tau^*),r(\tau)+r(\tau')\geq0\},
\end{equation}
so that $\sum_{\tau,\tau'\in\ml T'}=\sum_{(\tau,\tau')\in \ml R}+\sum_{(\tau,\tau')\in\ml T'\times \ml T'\backslash\ml R}$. For the remaining singular interactions $(\tau,\tau')\in \ml T'\times \ml T'\backslash\ml R$, the products cannot be defined in the classical sense. We therefore introduce additional enhanced objects through a heat-flow regularization procedure, which provides a renormalized interpretation of these singular products.
\begin{equation}\label{eq;formal calculation}
   \ml LX^{\tau\cdot\tau'}=f(D)(X^{\tau}\cdot X^{\tau'}), \quad \ml Lv=\ml Lu-\sum_{\tau\in\ml T'}c(\tau)\ml LX^{\tau}
\end{equation}
Thus, in order to well-define the equation for $v$, we need to define the set  $\m X=\{X^\tau\}_{\tau\in \ml T'}=\{X^{\tau\cdot \tau'}:(\tau,\tau')\in \ml T'\times \ml T'\backslash\ml R\}$  which we refer to  a collection of enhanced objects containing all information needed to interpret the singular interactions. 
\begin{definition}
     We define a enhance data $\m X$ is generated by $\ml T'\subset \ml T$ if following three properties is satisfied
	 \begin{itemize}
		\item[(1)]{$\m X=\{X^{\tau}\}_{\tau\in\ml T'}$  where $X$ is adapted to triple $(V,\ml T,r)$.}
		\item[(2)]{$X^{\tau^*}\in\m X$.}
		\item[(3)]{If $\tau_1,\tau_2\in\ml T'$ satisfied $r(\tau_1)+r(\tau_2)<0$, we have $X^{\tau_1\cdot\tau_2}\in\m X$.  }
		Since every thing in $\m X$ is belong to some Banach space and determined by $X^{\tau^*}$, by Proposition \ref{prop;tau} (1), assume $\ml T'$ is finite, we can use a uniform notation to define the $X^{\alpha,b}$ norm for $\m X\in \prod_{\tau\in\ml T'}V_{r(\tau)}$ as 
		\begin{equation}
			\Vert \m X\Vert_{X^{\alpha,b}}=\sum_{\tau\in\ml T'} \{\Vert X^{\tau}\Vert_{V_{r(\tau)}}:r_b(\tau^*)=\alpha\},
		\end{equation}
		where $r$ be a $b$-order regular function.
	\end{itemize}
\end{definition}
\begin{remark}
The condition $2\alpha+b>0$ guarantees that the enhanced data $\m X$ contains only finitely many singular components and that the mixed product
\[
v\cdot \sum_{\tau\in\ml T'}c(\tau)X^\tau
\]
is well-defined. When $2\alpha+b<0$, the enhanced data may become infinite and the above product requires further renormalized interpretations. These more singular cases are beyond the scope of the present work; we refer to 
\cite{gubinelli2015paracontrolled,hairer2013solving,hairerTheoryRegularityStructures2014}
for related theories based on paracontrolled calculus and regularity structures.
\end{remark}
We now reformulate the original problem as finding the regular component $v$ associated with an enhanced data $\m X$ satisfying suitable assumptions. More precisely, for a general enhanced data $\m X$, we consider the Generalized Singular Fractional Burgers-type(GSFB) equation
\begin{equation}\label{eq;gRBE;subcritcal}
\begin{aligned}
u&=\sum_{\tau\in\ml T'}c(\tau)X^\tau+v,\\
\partial_t v-\Lambda^\gamma v
&=f(D)(v^2)+f(D)\left(\sum_{(\tau,\tau')\in\ml R}c(\tau)c(\tau')X^\tau X^{\tau'}\right)+2f(D)\left(v\sum_{\tau\in\ml T'}c(\tau)X^\tau
\right).
\end{aligned}
\end{equation}
The first step is to establish the well-posedness of the above equation for
the regular component $v$ under general assumptions on the enhanced data
$\m X$. Let $T\geq0$, define $V_{r(\tau)}(T)=C_TW^{r(\tau)}=C_T(H^{r(\tau)}\cap\ml C^{r(\tau)})$,we introduce the norm of $\m X$ by
\begin{align*}
\|\m X\|_{X_T^{\alpha,b}}
&=
\sum_{X^\tau\in\m X}
\|X^\tau\|_{C_T(\ml C^{r(\tau)}\cap H^{r(\tau)})}\\
&=
\sum_{X^\tau\in\m X}
\|X^\tau\|_{C_TW^{r(\tau)}} .
\end{align*}
Under this framework, we obtain the following theorem.
\begin{theorem}\label{th;local result;gRBE;subcritical}
	 Let $\alpha<0$, $\gamma> 1$ and $0<b\leq\gamma-1$ satisfied 
	 $$2\alpha+b>0.$$ $f(D)=g(D) D$ where $g(D)$ is a bounded Fourier multiplier of order zero. Given $\m X$ be a $b$-order enhance data generated by a regular set $\ml T'$ satisfied $(\m X,u_0)\in X_T^{\alpha,b}\times W^{2s}$, then we have
	 \begin{itemize}
		\item{ (Existence and uniqueness) If  $-\alpha<s<\rho=\alpha+b$, then there exist $T^*$ and a unique $(u,v)\in C_{T^*}W^\alpha\times C_{T^*}W^s$ such the equation \eqref{eq;gRBE;subcritcal} is satisfied on $[0,T^*]$ for $u(0)=u_0+\sum_{\tau\in\ml T'}c(\tau) X^\tau(0)$. We call that $(u,v)$ is the solution of  equation \eqref{eq;gRBE;subcritcal} driven by $(\m X,u_0)$ on $[0,T]$.}
		\item{(Blow up time)The maximal time of existence $\bar T$ of such a solution does not dependent on $s$ and satisfied
		\begin{equation}
			\bar T=\sup\{t>0; \Vert v\Vert_{C_{T^*}W^s}<\infty\}.
		\end{equation}
		}
		\item{(Continuous dependence)
	Assume that  $\Vert u_0^n-u_0\Vert_{W^s}\to0$ and  $\Vert \m X^n-\m X\Vert _{X_{T}^{\alpha,b}}\to0$ for $T>0$, $T^*\leq T$ and $(u,v)$ be the solution of \eqref{eq;gRBE;subcritcal}  driven by $(\m X,u_0)$ on $[0,T^*]$, then there exists $T_n$ satisfying $\lim_{n\to\infty} T_n= T^*$ and for any $n\in \m N$, we have $(u^n,v^n)$ is the solution of \eqref{eq;gRBE;subcritcal} driven by $(\m X^n,u^n_0)$ on $[0,T_n]$. Moreover, we have $\Vert u_n-u\Vert_{C_{T_n\wedge T^*}W^\alpha}\to0$ as $\sigma\to0$. }
	\end{itemize}
\end{theorem}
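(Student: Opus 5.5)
We recast \eqref{eq;gRBE;subcritcal} in mild form and solve it with Lemma \ref{le;Picard th} in the space $E=C_TW^s$. Writing $P_t=e^{t\Lambda^\gamma}$ for the fractional heat semigroup, $v$ solves
\[
v=a+L(v)+\ml B(v,v),\qquad a(t)=P_tu_0+\int_0^tP_{t-r}f(D)\Big(\sum_{(\tau,\tau')\in\ml R}c(\tau)c(\tau')X^\tau X^{\tau'}\Big)(r)\,dr,
\]
with
\[
L(v)=2\int_0^tP_{t-r}f(D)\Big(v\sum_{\tau\in\ml T'}c(\tau)X^\tau\Big)dr,\qquad \ml B(v,w)=\int_0^tP_{t-r}f(D)(vw)\,dr.
\]
Since $f(D)=g(D)D$ with $g(D)$ bounded on $W^s$ (Assumption \ref{ass:g}), $f(D)$ maps $W^{\beta}$ into $W^{\beta-1}$. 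Lemma \ref{le;est;heat flow}, applied with a gain of $\delta'$ derivatives, costs a factor $t^{1-\delta'/\gamma}$. We always use $\delta'<\gamma$, which produces a positive power of $T$.

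\textbf{Estimates for the three terms.}

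For $\ml B$: $W^s$ is an algebra for $s>0$, so $\|f(D)(vw)\|_{W^{s-1}}\lesssim\|v\|_{W^s}\|w\|_{W^s}$. Gaining one derivative ($\delta'=1<\gamma$) gives
\[
\|\ml B\|\lesssim T^{1-1/\gamma}.
\]

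For $L$: each $X^\tau$ lies in $W^{r(\tau)}$ with $r(\tau)\ge\alpha$ by Proposition \ref{prop;tau}. Decompose $vX^\tau$ with Bony's paraproduct (Lemma \ref{le;est;paraproduct}):
\begin{itemize}
\item $v\prec X^\tau\in W^{\alpha}$, using only $v\in L^\infty$;
\item $v\succ X^\tau\in W^{s+\alpha}$;
\item $v\circ X^\tau\in W^{s+\alpha}$, which is legitimate because $s+\alpha>0$ (this is exactly the hypothesis $-\alpha<s$).
\end{itemize}
Hence $f(D)(vX^\tau)\in W^{\alpha-1}$. To land in $W^s$ we must gain $s-\alpha+1$ derivatives. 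This requires $s-\alpha+1<\gamma$, which follows from $s<\alpha+b\le\alpha+\gamma-1$. We obtain
\[
\|L\|_{E\to E}\lesssim T^{1-(s-\alpha+1)/\gamma}\,\|\m X\|_{X_T^{\alpha,b}}.
\]

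For $a$: if $(\tau,\tau')\in\ml R$, then $r(\tau)+r(\tau')\ge0$. If the sum is strictly positive, the product is classical with regularity $\min(r(\tau),r(\tau'))\ge\alpha$. Then $f(D)(\cdot)\in W^{\alpha-1}$ and the same gain as for $L$ applies. The borderline case $r(\tau)+r(\tau')=0$ is the delicate point. I would handle it by noting that $\tau\cdot\tau'\in\ml T'$ whenever the sum is negative, so in the borderline case I would either use the recursive definition of $r$ to shift the exponents slightly or treat it as an enhanced object. The initial part $P_tu_0$ is controlled by the assumption $u_0\in W^{2s}\subset W^s$.

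\textbf{Choice of $T$ and existence/uniqueness.} Choose $r$ with $\|a\|_E<r<1/(8\|\ml B\|)$ and then $T$ small so that $\|L\|<1/4$. This is possible because $\|a\|_E$ stays bounded as $T\to0$ while $\|\ml B\|\to0$. Lemma \ref{le;Picard th} then gives a unique $v$ in the ball. Uniqueness in the whole space $C_TW^s$ follows by the standard continuation argument on short subintervals. We set $u=\sum_{\tau\in\ml T'}c(\tau)X^\tau+v\in C_TW^\alpha$, using $r(\tau)\ge\alpha$ and $s>\alpha$.

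\textbf{Blow-up time.} The existence time depends only on $\|v(t_0)\|_{W^s}$ and $\|\m X\|$. So as long as $\|v\|_{W^s}$ stays bounded we can restart the argument, and the maximal time is characterized by blow-up of the $W^s$ norm. Independence of $s$ follows from a parabolic bootstrap. For $s<s'<\rho$, the forcing terms $f(D)(vX^\tau)$ and the $\ml R$-term have regularity determined by $\alpha$, not by $s$. Lemma \ref{le;est;heat flow} therefore upgrades $v\in W^s$ on $[\epsilon,T]$ to $W^{s'}$, with a linear Gronwall-type bound. Hence the $W^s$ and $W^{s'}$ norms blow up simultaneously.

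\textbf{Continuous dependence.} $a_n-a$ and $L_n-L$ are linear in $u_0^n-u_0$ and in $\m X^n-\m X$ respectively. The same estimates as above give $\|a_n-a\|_E\to0$ and $\|L_n-L\|_{E\to E}\to0$. The second part of Lemma \ref{le;Picard th} then yields $\|v^n-v\|_E\to0$ on a common short interval. Iterating on successive subintervals up to $T^*$ produces times $T_n\to T^*$, and convergence of $u^n$ in $C_{T_n\wedge T^*}W^\alpha$ follows from that of $v^n$ and $\m X^n$.

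\textbf{Main obstacle.} I expect the main difficulty to be the bound on $L$: keeping the time factor $T^{1-(s-\alpha+1)/\gamma}$ positive while the resonant term is still defined. This forces the window $-\alpha<s<\alpha+b$, which is nonempty exactly because $2\alpha+b>0$. A secondary difficulty is the borderline pairs in $\ml R$ with $r(\tau)+r(\tau')=0$.
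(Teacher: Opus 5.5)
Your proposal follows the paper's proof essentially step for step. It uses the mild form $v=a+L(v)+\mathcal{B}(v,v)$ solved in $C_TW^s$ by Lemma \ref{le;Picard th}, and paraproduct bounds for $vX^\tau$ based on $s+\alpha>0$. The constraint $s<\alpha+b\le\alpha+\gamma-1$ keeps a positive power of $T$. The blow-up criterion comes from restarting, and continuous dependence from the perturbative part of Lemma \ref{le;Picard th} iterated over subintervals. Your unconditional-uniqueness argument and the bootstrap for the $s$-independence of $\bar T$ go somewhat beyond what the paper writes. One small correction: $a_n-a$ is not linear in $\mathbb{X}^n-\mathbb{X}$, since it contains the $\mathcal{R}$-products. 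It is, however, bounded by $C\|u_0^n-u_0\|_{W^s}+C\|\mathbb{X}^n-\mathbb{X}\|(\|\mathbb{X}^n\|+\|\mathbb{X}\|)$, which is all you need.

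The point you leave open, pairs in $\mathcal{R}$ with $r(\tau)+r(\tau')=0$, is a genuine hole, but the paper has it too. Its proof simply asserts $r(\tau)+r(\tau')>0$ for $(\tau,\tau')\in\mathcal{R}$. Yet $\mathcal{R}$ is defined with $\ge 0$, and Lemma \ref{le;est;paraproduct}(3) needs a strictly positive sum of regularities.

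By Lemma \ref{le;r;1}, every $\tau\neq\tau^*$ has $r(\tau)\ge 2\alpha+b>0$. So the only possible borderline pairs are $(\tau^*,\tau)$ with $r(\tau)=-\alpha$. For example, $\tau=\tau^*\cdot\tau^*$ when $b=-3\alpha$ (say $\alpha=-1/4$, $b=3/4$, $\gamma=2$, $\mathcal{T}'=\{\tau^*,\tau^*\cdot\tau^*\}$), which satisfies all the hypotheses.

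Your first remedy cannot work. The data only give $X^\tau\in V_{r(\tau)}$, so exponents can only be lowered. Lowering $\alpha$ or $b$ lowers $r$ on the pair and turns it into a singular one, which then requires an object $X^{\tau^*\cdot\tau}$ that is not in $\mathbb{X}$.

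Your second remedy is the right one, but it amends the hypotheses rather than being a step of the proof. Either close $\mathcal{T}'$ (and $\mathbb{X}$) under pairs with $r(\tau_1)+r(\tau_2)\le 0$ and define $\mathcal{R}$ by the strict inequality, or assume $r(\tau)\neq-\alpha$ for all $\tau\in\mathcal{T}'$. With that amendment your argument, like the paper's, goes through.
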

Finally, we give a useful figure to 
\begin{figure}[H]
	\centering
	\begin{tikzpicture}
		\matrix (m) [matrix of math nodes,row sep=6em,column sep=8em,minimum width=6em]
		{   \eqref{eq;gRBE;subcritcal} & {(\tilde u,\tilde v)}\\
			\eqref{eq;generalized Burgers-type} & u \\
			\eqref{eq;Burgers;app}& u^\epsilon \\};
		\path[-stealth]
		(m-2-1) edge node [left] {Approximated} (m-3-1)
		(m-3-1.east|-m-3-2) edge node [below] {solution}(m-3-2)
		(m-1-1) edge node [midway,below,sloped]{Theorem \ref*{th;local result;gRBE;subcritical}} (m-1-2)
		(m-1-2) edge node [right]{Enhance data} (m-2-2) 
		(m-2-2) edge  node [midway] {×} (m-2-1)
		(m-2-1) edge node [left] {Generalized} (m-1-1)
		(m-3-2) edge node [right] {Theorem \ref*{th;main result}}  (m-2-2);
	\end{tikzpicture}
\end{figure}

\section{Local well-posedness}\label{sec;well-posedness}
\par Before we prove Theorem \ref{th;local result;gRBE;subcritical}, we firstly provide following two useful lemmas
\begin{lemma}\label{le;r;1}
	Let $r$ be $b$-order regular function on $\ml T'$ satisfied $r(\tau^*)=\alpha<0$, $\alpha+b>0$. Then for any $\tau\in \ml T'\backslash\{\tau^*\}$, we have $r(\tau)\geq 2\alpha +b$.
	\begin{proof} 
            We first note that  
            \[
r(\tau^*\cdot\tau^*)=
\min(\alpha,\alpha,2\alpha)+b
=2\alpha+b .
\]
Hence the desired inequality holds for $\tau=\tau^*\cdot \tau^*$. Suppose, on the contrary, that there exists some $\tau\in\ml T'$ such that
\[
r(\tau)<2\alpha+b.
\]
Since $\tau\neq \tau^*,\tau^*\cdot\tau^*$, by the construction of $\ml T'$, there exists $\tau_1,\tau_2\in\ml T'$ (namely, subtree) such that $\tau=(\tau_1\cdot\tau_2)$. By the definition of  $b$-order regular function $r$,
		\begin{align*}
			r(\tau)=\min(r(\tau_1),r(\tau_2),r(\tau_1)+r(\tau_2))+b<2\alpha+b.
		\end{align*}
		We claim that at least one of $r(\tau_1)$ and $r(\tau_2)$ is less than $2\alpha+b$, namely
        \[
        r(\tau_1) <2\alpha+b\quad  or\quad r(\tau_2) <2\alpha+b
        \]
       Otherwise, assume that 
       \[
       r(\tau_1)\geq2\alpha+b\quad and\quad r(\tau_2)\geq2\alpha+b,
       \]
       and consequently
		\begin{align*}
			r(\tau)\geq\min(2\alpha+b,4\alpha+2b)+b=\min (2\alpha+2b,4\alpha+3b)
		\end{align*}
		Since $\alpha+b>0$, we have $2\alpha+2b>2\alpha+b$ and $4\alpha+3b=2\alpha+b+2(\alpha+b)>2\alpha+b$,which contradicts the assumption, therefore the claim follows. Hence, for the given $\tau$, we can choose a subtree $\tau_1$ or $\tau_2$ denoted by $\tau'$, such that
        \[
        r(\tau')<2\alpha+b.
        \]
        Repeating the above argument, we obtain a descending finite sequence
		\begin{align*}
			\tau^0\to\tau^1\to\cdots \tau^n\to \cdots, \quad \tau^m\in\ml T',  m\in\m N.
		\end{align*}
		
       where $\tau^0=\tau$, and each $r(\tau^m)<2\alpha+b$, and $\tau^{m+1}$ is a subtree of $\tau^{m+1}$ . The above procedure can only be implemented a finite number of times, hence there exists a longest chain 
		\begin{align*}
			\tau^0\to\tau^1\to\cdots \tau^N.
		\end{align*}
	with$\tau^N=\tau^*$ and $\tau^{N-1}=\tau^*\cdot \tau^*$. However $r(\tau^*\cdot \tau^*)=2\alpha+b$, we have reached a contradiction which prove our lemma.
	\end{proof}
\end{lemma}
\begin{lemma}
   Let $r$ be a $b$-order regular function with $r(\tau^*)=\alpha$. $\ml T'(\tau^*)$ is a regular subset of $\ml T(\tau^*)$  associated with regular function $r$, if  $2\alpha+b >0$, then there exists a $\tau\in \ml T'$ such that $(\tau,\tau^*)\in \ml R$. 
    \begin{proof}
     If $\alpha\geq0$, then the lemma is obviously hold by choosing $\tau=\tau^*$. Now assume that $\alpha<0$, and $\bar \tau\in\ml T'$ satisfying $r(\bar \tau)+r(\tau^*)<0$, then by the definition of $\ml T'$, we have $\bar\tau\cdot \tau^*\in\ml T'$. Noting that $\tau=\bar\tau\cdot \tau^*\neq \tau^*$, it then follows from the definition of regular function $r$ and Lemma \ref{le;r;1} that
     \begin{align*}
r(\tau^*)+r(\bar\tau\cdot\tau^*)=&r(\tau^*)+\min(r(\tau^*),r(\bar\tau),r(\bar\tau)+r(\tau^*))+b\\
           =&2r(\tau^*)+b=2\alpha+b>0.
       \end{align*}
    Then we have $(\tau,\tau^*)\in \ml R$, which finish our proof.
       \end{proof}
\end{lemma}

\begin{proof}[proof of Theorem \ref{th;local result;gRBE;subcritical}]
	Fix $M:=C\max(\Vert u_0\Vert_{W^s},\Vert \m X\Vert_{X_T^\alpha})$ where $C$ is the constant in Lemma \ref{le;est;heat flow}, $r:=2M$ and $\delta=\alpha+b-s>0$. Rewrite equation \eqref{eq;gRBE;subcritcal} in mild form $v:=a+L(v)+\ml B(v,v)$, where 
	\begin{align*}
		\ml La=\frac{c(\tau)c(\tau')}{2}D(\sum_{(\tau,\tau')\in\ml R}X^{\tau}X^{\tau'}),\quad &a(0)=u_0,\\
		\ml L(L(v))=D(v\sum_{\tau\in \ml T}c(\tau)X^{\tau}),\quad &L(v)(0)=0,\\
		\ml L(\ml B(v,v))=\frac{1}{2}D(v^2),\quad &B_f(v,v)(0)=0.
	\end{align*}
	  By Lemma \ref{le;r;1}, for $\tau\in\ml T\backslash\tau^*$, $r(\tau)\geq 2\alpha+b$, then we have $r(\tau)\geq\alpha$ since $\alpha+b>0$. By definition of $\ml R$, for any $(\tau,\tau')\in\ml R$, we have $r(\tau)+r(\tau')>0$, then there at least one of the two factors has positive regularity
	\begin{align*}
		r(\tau)>0\quad or\quad r(\tau')>0.
	\end{align*}
	Then by Lemma \ref{le;est;paraproduct}, we have
	\begin{equation}\label{eq;R;1}
		\begin{aligned}
			\Vert \sum_{(\tau,\tau')\in\ml R}X^{\tau}X^{\tau'}\Vert_{C_TW^\alpha}\lesssim\Vert X^{\tau}\Vert_{C_TW^{r(\tau)}}\Vert X^{\tau'}\Vert_{C_TW^{r(\tau')}}\lesssim \Vert \m X\Vert_{X_T^{\alpha,b}}^2.
		\end{aligned}
	\end{equation}
	 Since $0<s<\rho$, then by Lemma \ref{le;est;heat flow} and \eqref{eq;R;1}, for any $T\leq 1\wedge (\frac{1}{4CM})^{\frac{\gamma}{\delta}}$, we have
	\begin{align*}
		\Vert a\Vert_{C_{T}W^{s}}\leq& \frac{r}{2}+CT^{\frac{\delta}{\gamma}} \Vert \ml La\Vert_{C_{T}W^{s-\gamma+\delta}}\\
		\leq&\frac{r}{2}+CT^{\frac{\delta}{\gamma}} \Vert \sum_{(\tau,\tau')\in\ml R}X^{\tau}X^{\tau'}\Vert_{C_TW^\alpha}\\
		\leq&\frac{r}{2}+CT^{\frac{\delta}{\gamma}} \Vert \m X\Vert_{X_T^{\alpha,b}}^2<r.
	\end{align*}
     Noting that $X^{\tau^*}\in C_TW^{\alpha}$ and $s+\alpha>0$,  by Lemma \ref{le;est;heat flow} and Lemma \ref{le;est;paraproduct}, for any $T\leq 1\wedge (\frac{1}{8CM})^{\frac{\gamma}{\delta}}$,  we have
     \begin{align*}
     	\Vert L(v)\Vert_{C_{T}W^{s}}\leq& CT^{\frac{\delta}{\gamma}}\Vert D(v\sum_{\tau\in \ml T}X^{\tau})\Vert_{C_{T}W^{\alpha-1}}\\
     	\leq& CT^{\frac{\delta}{\gamma}}\sum_{\tau\in \ml T}\Vert vX^{\tau}\Vert_{C_{T}W^{\alpha}}\\
     	\leq &CT^{\frac{\delta}{\gamma}}\Vert v\Vert_{C_{T}W^{s}}\sum_{\tau\in \ml T}\Vert X^{\tau}\Vert_{C_{T}W^{\alpha}}\\
     	\leq &CT^{\frac{\delta}{\gamma}}\Vert v\Vert_{C_{T}W^{s}}\Vert \m X\Vert_{X^{\alpha}_{1}}\\
     	<&\frac{1}{4}\Vert v\Vert_{C_{T}W^{s}}.
     \end{align*}  
    Finally, for the bilinear term, since $\gamma>1$, for any $T\leq 1\wedge(\frac{1}{32CM})^{\frac{\gamma}{\gamma-1}}$ such that
     \begin{align*}
     	\Vert \ml B_f(v,v)\Vert_{C_{T}W^{s}}\leq& CT^{\frac{\gamma-1}{\gamma}}	\Vert v\Vert_{C_{T}W^{s}}^2.\\
     	<&\frac{1}{8r}\Vert v\Vert_{C_{T}W^{s}}^2.
     \end{align*}
    Combining the above estimates, all assumptions of Lemma  \ref{le;Picard th} are satisfied for $E=C_{T^*}W^s$ with $T^*\leq T$. Therefore, there exists a unique $v\in C_{T^*}W^s$ satisfied \eqref{eq;gRBE;subcritcal} where $T^*$ depends only on $M$. 
    \par Secondly, we obtain the solution until blow up time. Assume that we constructed a solution $u$ on $[0,h]$ for some $h>0$ and consider the time interval $[h,h+1]$. Denote the solution on this interval by $\tilde u$, and write $\tilde{\m X}(t)=\m X(t+h)$ which means $\tilde X^{\tau}(t)=X^{\tau}(t+h)$ for any $X^{\tau}\in \m X$. The initial condition for $\tilde u$ is $\tilde u(0)=u(h)\in W^s$, it's not the same setting as before. But we only needed a smooth initial condition of regularity $2s$ for the remainder, note that 
    \begin{align*}
    	\tilde v(0)=&\tilde u(0)-\sum_{\tau\in\ml T'}c(\tau) \tilde X^{\tau}(0)\\
    	=&u(h)-\sum_{\tau\in\ml T'}c(\tau) X^{\tau}(h)\\
    	=&v(h)\in W^{2s}.
    \end{align*}
    As a consequence, let $\tilde{M}:=C\max(\Vert u\Vert_{\ml W^s(h)}, \Vert \m X\Vert_{X_{h+1}^{\alpha,b}})$, then there exists a unique solution $(u,v)$ of \eqref{eq;gRBE;subcritcal} on $[0,h(\tilde M)]$ driven by $(v(h),\m X(t+h))$. The second result then follows by choosing $h>0$ arbitrarily such that  $h<\bar T$ .
    \par Finally, we prove the continuous dependence of the solution with respect to $(\m X,u_0)$ up to blow up time. Since $u_0^n\in W^s$ and $\m X\in X_T^{\alpha,b}$, the first result implies that for every $n\in\m N$, there exists a positive time $T_n$ and unique solution $(u^n,v^n)$ of\ \eqref{eq;gRBE;subcritcal} driven by $(\m X^n,u_0^n)$ on $[0,T_n]$. It then remain to show the convergence. 
    \par Let $0<T^*< \bar T\wedge T$ , and let $(u,v)$ denote the solution of \eqref{eq;gRBE;subcritcal} driven by $(\m X,u_0)$ on $[0,T^*]$. Since $u_0^{n}\to u_0$ in $W^s$, $\m X^n\to \m X$ in $X_T^{\alpha,b}$, for any sufficiently small $\delta>0$ and $\epsilon_0>0$, there exists $N(\delta,\epsilon_0)$ such that for all $n>N(\delta,\epsilon_0)$
    \begin{align*}
        \Vert u_0^n-u_0\Vert_{W^s}<\epsilon_0,\quad \Vert \m X^n-\m X\Vert_{X_T}<\delta.
    \end{align*}
    Let $M=(1\vee C)\max (\Vert v\Vert_{C_TW^s},\Vert \m X\Vert_{X_T^\alpha})$, and $r=2M+2$, by Lemma \ref{le;Picard th}, there exists $N\in \m N$ such that for any $n> N\vee N(\delta,\epsilon_0)$
\begin{align*}
    \Vert v^{n}(t)-v(t)\Vert_{C_{T(M)}W^s}\leq&  4\Vert a(t)-a_n(t)\Vert_{W^s}+8r\Vert L_n-L\Vert_{W^s\to W^s}\\
    \leq&C(\epsilon_0+\delta),
\end{align*}
    where $C$ is only dependent on $M,T^*$. Now choosing $k=1,2,\cdots,[T/T(M)]+1$, and $\epsilon,\epsilon_{k}\leq 1$ satisfying
    \begin{equation}
      (1\vee C)(\epsilon_k+\delta)\leq\epsilon_{k+1},\quad \epsilon_{[T/T(M)]+1}=\epsilon.
    \end{equation}
  Define $(u^{n,k},v^{n,k})$ to be the solution of \eqref{eq;gRBE;subcritcal} driven by $(\m X^{n,k},v^{n,k-1}(T(M)))$ on $[0,T(M)]$ where $v^{n,0}=u^n_0$ and 
  \begin{align*}
      \m X^{n,k}=\{X^\tau(t+k(T(M)))\}_{\tau\in\ml T'}.
  \end{align*}
  Particularly, $(u^{n,0},v^{n,0})=(u^n,v^n)$ on $[0,T(M)]$. We claim that for any $k$, 
\begin{equation}\label{eq;induced claim}
    \Vert v^{n,k}\Vert_{C_{T(M)}W^s}\leq M+1,\quad \Vert v^{n,k}(\cdot)-v(\cdot+kT(M))\Vert_{C_{T(M)}W^s}\leq \epsilon_{k+1}.
\end{equation}
 Indeed, the case $k=0$ follows immediately from the previous estimates. Assume \eqref{eq;induced claim}  is hold for $k=m$, then $\Vert v^{n,m+1}(0)\Vert_{W^{s}}\leq M+1$, which ensure that $(u^{n,m+1},v^{n,m+1})$ exists on $[0,T(M)]$. To estimate $\Vert v^{n,m+1}(\cdot)-v(\cdot+(m+1)T(M))\Vert_{C_{T(M)}W^s}$, noting that 
 \begin{align*}
     \Vert v^{n,m+1}(0)-v((m+1)T(M))\Vert_{W^s}= \Vert v^{n,m}(T(M))-v(T(M)+mT(M))\Vert_{W^s}\leq \epsilon_{m+1}<\epsilon,
 \end{align*}
 then similar as the case of $k=0$, we have
 \begin{align*}
      \Vert v^{n,m+1}(\cdot)-v(\cdot +(m+1)T(M))\Vert_{C_{T(M)}W^s}\leq &4\Vert a^{n,m+1}-a\Vert_{C_{T(M)}W^s} +8\Vert L^{n,m+1}-L\Vert_{C_{T(M)}W^s\to C_{T(M)}W^s} \\
 \leq&C(\epsilon_{m+1}+\delta)\leq \epsilon_{m+2}\leq 1.
 \end{align*}
then $\Vert v^{n,m+1}\Vert_{C_{T(M)}W^s} \leq M+1$, and the induced claim is hold. Noting that $(u^{n},v^{n})=(u^{n,k},v^{n,k})$ on $[(k-1)T(M),T\wedge kT(M)]$ is the solution of \eqref{eq;gRBE;subcritcal} driven by $(\m X_n, u^n_0)$, by uniqueness of solutions, the above induction argument yields that for any fixed $\epsilon>0$,  we have 
 \begin{equation}
     \Vert v^{n}-v\Vert_{C_{T\wedge T^*}W^s}\leq \max (\epsilon_k)\leq \epsilon,
 \end{equation}
 for $n>N(\epsilon,\delta)$. Therefore after redefining $T_n= T$ for $n>N(\epsilon,\delta)$ while leaving the finitely many remaining $T_n$'s unchanged , we conclude that
 \begin{align*}
     \lim_{n\to \infty}\Vert v^n-v\Vert_{C_{T_n\wedge T^*}W^s}=0.
 \end{align*}
Finally, choose a sequence \(T_m\uparrow\bar T\). For each fixed \(T_m\), the above argument provides a sequence \(T_{n,m}\) such that $T_{n,m}\to T_m$ and
\[
\lim_{n\to\infty}
\|v^n-v\|_{C([0,T_{n,m}\wedge T_m];W^s)}
=0 .
\]
By a diagonal argument, we can choose a sequence \(T_n\to\bar T\) such that
\[
\lim_{n\to\infty}
\|v^n-v\|_{C([0,T_n\wedge\bar T);W^s)}
=0 .
\]
This completes the proof.
\end{proof}
Next, we give energy estimate towards Burgers equation
\begin{proposition}
    Let  $\gamma>1$,  $2\alpha+b>0$, $\alpha>1-\frac{\gamma}{2}$, and $\m X\in X_T^{\alpha,b} $, $(u,v)$ is the solution of \eqref{eq;gRBE;subcritcal} on $[0,T]$ driven by $(u_0,\m X)$, then for any $t\in[0,T]$, we have the estimate
\begin{align*}
    \Vert v(t)\Vert_{L^2}^2+2\int_0^t\Vert v(t)\Vert_{H^{\frac{\gamma}{2}}}^2\mathrm ds\leq& \Vert u_0\Vert_{L^2}\exp({C_\nu(1+\Vert \m X\Vert_{X_T^{\alpha,b}}^{\frac{2\gamma}{\gamma-1}})t})\\
    &+C_\nu\Vert \m X\Vert_{C_{ T}W^\alpha}^4\int_0^t e^{C_\nu(1+\Vert \m X\Vert_{X_T^{\alpha,b}}^{\frac{2\gamma}{\gamma-1}})(t-t')} \mathrm{d}t'.
\end{align*}
    \begin{proof}
        Observing that $v$ satisfies \eqref{eq;gRBE;subcritcal}. Multiplying $v$ on the both side of \eqref{eq;gRBE;subcritcal} and integrating by parts, we have
			\begin{align*}
				\Vert v(t)\Vert_{L^2}^2+2\int_0^t \Vert v(t')\Vert_{H^\frac{\gamma}{2}}^2\mathrm{d}t'\leq& \Vert u_0\Vert_{L^2}^2+2\int_0^t\Vert v(t')\Vert_{L^2}^2\mathrm{d}t'+2\int_0^t\langle v(t'),2\p_x(v(t')\sum_{(\tau,\tau')\in\ml R}c(\tau)X^\tau)\rangle\mathrm{d}t'\\
                &+\int_0^t\langle  v(t'),\p_x (\sum c(\tau)c(\tau')X^\tau X^{\tau'})\rangle\mathrm{d}t'.
			\end{align*}
		By \eqref{eq;R;1}, we have  
        \begin{equation}
            \sum_{(\tau,\tau')\in\ml R}\Vert X^\tau X^{\tau'}\Vert_{C_TW^\alpha} \leq \Vert \m X\Vert_{X_T^{\alpha,b}}^2.
        \end{equation}
        \
            By H\"older inequality, Young's inequality, Lemma \ref{le;est;paraproduct} and Lemma \ref{le;regularity},  for $\alpha>1-\frac{\gamma}{2}$, we have
			\begin{align*}
				\langle  v(t'),\p_x (\sum_{(\tau,\tau')\in\ml R} c(\tau)c(\tau')X^\tau X^{\tau'})\rangle\leq& C\Vert v(t')\Vert_{H^{\frac\gamma2}}\sum_{(\tau,\tau')\in\ml R}\Vert X^{\tau}X^{\tau'}\Vert _{H^{1-\frac\gamma 2}}\\
                \leq&\nu\Vert v(t)\Vert_{H^{\frac\gamma2}}^2+C_\nu\Vert \m X\Vert_{X_{t'}^{\alpha,b}}^4
			\end{align*}
         Similarly, we have
         \begin{align*}
             \langle v(t'),2\p_x(v(t')\sum_{(\tau,\tau')\in\ml R} c(\tau)X^\tau)\rangle\leq& C\Vert v(t')\Vert_{H^{\frac\gamma2}}\Vert v(t')\sum_{(\tau,\tau')\in\ml R} c(\tau)X^\tau\Vert_{H^{1-\frac\gamma 2}}\\
             \leq& C\Vert v(t')\Vert_{H^{\frac\gamma2}}\Vert v(t')\Vert_{H^{\frac\gamma4+\frac
             14}}\Vert \m X\Vert_{X_{t'}^\alpha}\\
             \leq&\frac{\nu}{2} \Vert v(t')\Vert_{H^{\frac\gamma2}}+C_\nu\Vert \m X\Vert_{X_{t'}^\alpha}^{\frac{2\gamma}{\gamma-1}}\Vert v\Vert_{L^2}^2
         \end{align*}
			For $0\leq t\leq T$, we have 
			\begin{equation}
				\begin{aligned}
					\Vert v(t)\Vert_{L^2}^2+2\int_0^t \Vert v(t')\Vert_{H^\frac{\gamma}{2}}^2\mathrm{d}t'\leq& \Vert u_0\Vert_{L^2}^2+\nu \int_0^t \Vert v(t')\Vert_{H^\frac{\gamma}{2}}^2\mathrm{d}t'+C_\nu\int_0^t (1+\Vert \m X\Vert_{X_T^{\alpha,b}}^{\frac{2\gamma}{\gamma-1}})\Vert v(t')\Vert_{L^2}^2\mathrm{d}t'\\
					&+C_\nu\int_0^t\Vert \m X\Vert_{X_T^{\alpha,b}}^4\mathrm{d}t'.	
				\end{aligned}
			\end{equation}
			Observing that $\Vert\tilde X\Vert_{C_tW^\alpha}\leq \Vert \tilde X\Vert_{C_{\bar T}W^\alpha}$, for $\nu<1$, by Gronwall's inequality, the proposition is proved.
		\end{proof}
	
\end{proposition}

	\section{Driving by $\vert D\vert^\frac{1}{2}\xi$}\label{sec;high order}
	In this section, we restrict $\gamma\in(\frac{3}{2},2]$ and $\sigma=\frac{1}{2}$. Let us begin with \eqref{eq;Burgers;app}. To define it completely, choosing a function $\phi:\mathbb{R}\mapsto\mathbb{R}_+$, that is even, smooth, compactly supported, decreasing on $\mathbb{R}_+$, and such that $\phi(0)=1$, and then setting 
	$$\xi_\epsilon(x)=\ml F^{-1}(\phi(\epsilon k)\ml F(\xi)(k))(x),\quad u^\epsilon_0(x)=\ml F^{-1}(\phi(\epsilon k)\ml F(u_0)(k))(x).$$
	Its obviously that $u^\epsilon_0(x),\xi_\epsilon(x)\in C^\infty(\m T)$ is smooth by distribution theory. Then \eqref{eq;Burgers;app} is a smooth equation. Let $Y_\epsilon$ be the stationary solution of linear equation
\begin{equation}\label{eq;linear evolution;noise;1/2}
	\p_t Y_\epsilon-\Lambda^\gamma Y_\epsilon=\vert D\vert^{\frac{1}{2}}\xi_\epsilon,\quad Y_\epsilon(0)=\int_{-\infty}^0e^{-s\Lambda^\gamma}|D|^{\frac{1}{2}}\xi_\epsilon(s)\mathrm ds,
\end{equation}
	 denoting $u^\epsilon=u^{\epsilon,1}+Y_\epsilon$, we have
	\begin{align*}
		\ml Lu^{\epsilon,1}=f(D)(Y_\epsilon^2+(u^{\epsilon,1})^2+2Y_\epsilon u^{\epsilon,1}),\quad u^{\epsilon,1}=u_0^\epsilon.
	\end{align*}
    It can be solved by similar argument in \cite{zhang2025resultsfractionalroughburgers}, where $u^{\epsilon,1}\in C_T\ml C^\infty$. However, we cannot obtain the solution of equation \eqref{eq;generalized Burgers-type} by taking the limit of $u^\epsilon$, since $Y_\epsilon$ belongs to some negative index space by following lemma which is the consequence of Lemma 2.7 in \cite{zhang2025resultsfractionalroughburgers}.
	\begin{proposition}\label{prop;order-1}
		Let $\gamma>3/2$ and $\sigma=1/2$, then for any $T>0$, there exists some $\alpha>-1/4$ such that  such that $Y_\epsilon\to Y$ in $L^2(\Omega; C_TW^{\alpha})$. 
	\end{proposition}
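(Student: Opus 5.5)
The plan is to work mode by mode in Fourier space and use that $Y_\epsilon$ is Gaussian, so that second-moment bounds upgrade to all moments. Regularity in $W^\alpha=\mathcal C^\alpha\cap H^\alpha$ then follows from Besov embedding, and time continuity from Kolmogorov's criterion. Write $\hat\xi(t,k)$ for independent complex Brownian differentials, with the usual conjugate symmetry, and set $\lambda_k=|k|^\gamma$. The stationary solution of \eqref{eq;linear evolution;noise;1/2} is
\[
\hat Y_\epsilon(t,k)=\int_{-\infty}^t e^{-(t-s)\lambda_k}|k|^{1/2}\phi(\epsilon k)\,\hat\xi(\mathrm ds,k).
\]
Here I am reading $\mathcal L$ as dissipative, i.e. $-\Lambda^\gamma$ acting with symbol $-|k|^\gamma$ in the semigroup; the zero mode is killed by $|D|^{1/2}$. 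The limit $Y$ is defined by the same formula with $\phi(\epsilon k)$ replaced by $1$. Itô's isometry then gives
\[
\mathbb E|\hat Y_\epsilon(t,k)-\hat Y(t,k)|^2=\frac{|k|\,(1-\phi(\epsilon k))^2}{2|k|^\gamma}\lesssim |k|^{1-\gamma}\min(1,\epsilon|k|)^{\kappa}
\]
for any $\kappa\in[0,2]$, and $\mathbb E|\hat Y(t,k)|^2\lesssim |k|^{1-\gamma}$.

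\textbf{Step 1: the space regularity.} For a dyadic block,
\[
\mathbb E\|\Delta_j(Y_\epsilon-Y)(t)\|_{L^2}^2\lesssim \sum_{|k|\sim2^j}|k|^{1-\gamma}\min(1,\epsilon|k|)^\kappa\lesssim 2^{j(2-\gamma)}\min(1,\epsilon 2^j)^\kappa .
\]
Since $\Delta_j Y$ is Gaussian and stationary in $x$, we also have
\[
\mathbb E|\Delta_jY(t,x)|^{p}\lesssim_p\Big(\sum_{|k|\sim 2^j}|k|^{1-\gamma}\Big)^{p/2}\sim 2^{jp(2-\gamma)/2}.
\]
Integrating in $x$ and using Bernstein, $\|\cdot\|_{L^\infty}\lesssim 2^{j/p}\|\cdot\|_{L^p}$, gives $\mathcal C^{\alpha}$ control, and the $H^\alpha$ control comes directly from the $L^2$ bound. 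Both hold for every $\alpha<\frac{\gamma-2}{2}$. Because $\gamma>3/2$, we have $\frac{\gamma-2}{2}>-\frac14$, so we may fix $\alpha\in(-\frac14,\frac{\gamma-2}{2})$. The bound holds with the small factor $\epsilon^{\kappa}$ obtained by spending a little regularity, i.e. taking $\kappa>0$ small so that $\alpha+\kappa/2<\frac{\gamma-2}{2}$ still.

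\textbf{Step 2: time regularity.} For $0\le s<t\le T$, I will bound $\mathbb E|\hat Y(t,k)-\hat Y(s,k)|^2\lesssim |k|^{1-\gamma}\min(1,|t-s|\lambda_k)^{\theta}$ with $\theta\in(0,1)$. This uses the standard splitting into $(e^{-(t-s)\lambda_k}-1)\hat Y(s,k)$ plus the fresh noise on $[s,t]$. It produces $|t-s|^{\theta}$ at the cost of $\gamma\theta$ derivatives, and the same estimate holds for $Y_\epsilon-Y$ with the extra factor $\min(1,\epsilon|k|)^\kappa$. Gaussian hypercontractivity upgrades these to $p$-th moments of $\|Y(t)-Y(s)\|_{W^{\alpha}}$. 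Kolmogorov's continuity theorem, applied with $p$ large and $\theta$ small, then yields
\[
\mathbb E\|Y_\epsilon-Y\|^2_{C_TW^\alpha}\lesssim \epsilon^{\kappa'}\to0 .
\]

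\textbf{Main obstacle.} The only delicate point is the bookkeeping of exponents. We must keep simultaneously $\alpha>-1/4$, the loss $1/p$ from Bernstein, the loss $\gamma\theta$ from time continuity, and the loss $\kappa/2$ for the rate in $\epsilon$, all strictly below $\frac{\gamma-2}{2}$. Since $\frac{\gamma-2}{2}+\frac14>0$ strictly when $\gamma>3/2$, choosing $p$ large and $\theta,\kappa$ small makes this consistent. This is the content of Lemma 2.7 in \cite{zhang2025resultsfractionalroughburgers}, which I would invoke to streamline Steps 1 and 2.
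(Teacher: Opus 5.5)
Your argument is correct and follows essentially the paper's route: the paper gives no separate proof and defers to Lemma 2.7 of \cite{zhang2025resultsfractionalroughburgers}, and your mode-wise It\^o isometry ($\mathbb E|\hat Y(t,k)|^2\sim|k|^{1-\gamma}$, hence any $\alpha<\frac{\gamma-2}{2}$, which exceeds $-\frac14$ precisely because $\gamma>\frac32$), followed by Gaussian hypercontractivity, Besov embedding and Kolmogorov's criterion, is the same machinery the paper packages in Proposition \ref{prop;key computation}. One small imprecision is harmless: a second-moment gain of $|t-s|^{\theta}$ costs only $\gamma\theta/2$ spatial derivatives, so your count of $\gamma\theta$ is merely conservative.
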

	The main difficult is that $Y$ only has $-\frac{1}{4}+$ order regularity(H\"older or Sobolev), $Y\cdot Y$ is ill-defined. Define 
    \begin{equation}
        B_f(X,Y)=\int_0^t P(t-s)f(D)(X\cdot Y)\mathrm ds.
    \end{equation}
Let $X_\epsilon^{\RS {lr}}=B_f(Y_\epsilon,Y_\epsilon)$ and $u^{\epsilon,1}=X_\epsilon^{\RS {lr}}+u^{\epsilon,2}$, we have $u^{\epsilon,2}$ satisfied
	\begin{align*}
		\ml Lu^{\epsilon,2}=\frac{1}{2}f(D)((u^{\epsilon,2})^2+2u^{\epsilon,2} X_\epsilon^{\RS {lr}}+(X_\epsilon^{\RS {lr}})^2+2Y_\epsilon X_\epsilon^{\RS {lr}}+2Y_\epsilon u^{\epsilon,2}).
	\end{align*}
	The convergence of $X_\epsilon^{\RS {lr}}$ is given by following proposition
	\begin{proposition}\label{prop;order-2}
			Let $\gamma>3/2$, $\sigma=1/2$ and $f(D)=g(D)D$ where $g$ satisfies Assumption \ref{ass:g}, then for any $T>0$, there exists some $\alpha>0$ and $X^{\RS {lr}}\in C_TW^\alpha$ such that $X_\epsilon^{\RS {lr}} \to X^{\RS{lr}}$ in $L^2(\Omega;C_TW^\alpha)$.
	\end{proposition}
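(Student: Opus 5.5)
Because $X_\epsilon^{\RS{lr}}$ lies in the second Wiener chaos, we propose a direct second-moment computation in Fourier variables, followed by Gaussian hypercontractivity and a Kolmogorov-type continuity argument.

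\emph{Fourier representation.} Write $\hat Y_\epsilon(t,k)=\phi(\epsilon k)|k|^{1/2}\int_{-\infty}^t e^{-|k|^\gamma(t-s)}\,\hat\xi(ds,k)$. Then, for $k\neq0$,
\[
\widehat{X^{\RS{lr}}_\epsilon}(t,k)=\int_0^t e^{-|k|^\gamma(t-s)}\,g(k)\,ik\sum_{k_1+k_2=k}\hat Y_\epsilon(s,k_1)\hat Y_\epsilon(s,k_2)\,ds .
\]
The zero mode is killed by the factor $k$, so no renormalization constant is needed, consistent with the absence of one in \eqref{eq;Burgers;app}.

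\emph{Second moment of the dyadic blocks.} Decompose the product into its Wick square plus a mean. The mean is $\sum_{k_1}\mathbb E|\hat Y_\epsilon(s,k_1)|^2$, which is supported on total frequency $k=0$ and is therefore annihilated by $D$. Hence $X_\epsilon^{\RS{lr}}$ is a pure second-chaos element.

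For the Wick square we compute $\mathbb E|\Delta_j X_\epsilon^{\RS{lr}}(t,x)|^2$ using Wick's theorem and the covariance of $Y$:
\[
\mathbb E\,\hat Y(s,k_1)\overline{\hat Y(s',k_1)}\approx \tfrac{|k_1|^{1}}{2|k_1|^\gamma}\,e^{-|k_1|^\gamma|s-s'|}.
\]
We then bound the time integrals by
\[
\int_0^t\!\!\int_0^t e^{-|k|^\gamma(2t-s-s')}e^{-(|k_1|^\gamma+|k_2|^\gamma)|s-s'|}\,ds\,ds' \lesssim |k|^{-\gamma}\,\bigl(|k|^\gamma+|k_1|^\gamma+|k_2|^\gamma\bigr)^{-1}.
\]
Using $|g(k)|\lesssim 1$ from Assumption \ref{ass:g}, this gives
\[
\mathbb E|\widehat{X_\epsilon^{\RS{lr}}}(t,k)|^2\lesssim \sum_{k_1+k_2=k}\frac{|k|^{2-\gamma}\,|k_1|^{1-\gamma}|k_2|^{1-\gamma}}{|k|^\gamma+|k_1|^\gamma+|k_2|^\gamma}.
\]
The sum over $k_1$ splits into two regions:
\begin{itemize}
\item the region $|k_1|\sim|k_2|\gtrsim|k|$ (high--high to low), which gives $|k|^{2-\gamma}\sum_{|k_1|\gtrsim|k|}|k_1|^{2-3\gamma}\sim|k|^{5-4\gamma}$ and converges since $3\gamma>3$;
\item the region where one frequency is $\sim|k|$ and the other is small, which gives $|k|^{3-3\gamma}\sum_{|k_1|\lesssim|k|}|k_1|^{1-\gamma}$, bounded by $|k|^{3-3\gamma}\max(1,|k|^{2-\gamma})$.
\end{itemize}
Consequently $\mathbb E|\Delta_jX_\epsilon^{\RS{lr}}(t,x)|^2\lesssim 2^{j(1+\max(5-4\gamma,\,3-3\gamma,\,5-4\gamma))}$. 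This is $\lesssim 2^{-2j\alpha}$ with $\alpha<\frac{4\gamma-6}{2}\wedge\frac{3\gamma-4}{2}$, which is positive exactly when $\gamma>3/2$. Checking that this exponent count is right, i.e.\ that the threshold is genuinely $\gamma>3/2$ and that each region is summable, is the main obstacle I expect. If the crude splitting loses too much, I would split the integrand more carefully according to which of $|k|,|k_1|,|k_2|$ dominates.

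\emph{Uniform-in-$\epsilon$ bounds, convergence, and upgrade to $C_TW^\alpha$.}
\begin{itemize}
\item The same bound holds uniformly in $\epsilon$.
\item For the difference $X_\epsilon^{\RS{lr}}-X_{\epsilon'}^{\RS{lr}}$, the factor $|\phi(\epsilon k_1)\phi(\epsilon k_2)-\phi(\epsilon' k_1)\phi(\epsilon' k_2)|^2\lesssim (\epsilon\vee\epsilon')^{\kappa}|k_1|^\kappa+\cdots$ costs a small power of frequency. This gives a Cauchy estimate with a factor $\epsilon^\kappa$ at the price of $\alpha\mapsto\alpha-\kappa$.
\item For time regularity, the same computation with $e^{-|k|^\gamma(t-s)}$ differences gives $\mathbb E|\Delta_j(X(t)-X(t'))|^2\lesssim |t-t'|^{\theta}2^{-2j(\alpha-\theta\gamma/2)}$.
\item Since everything lives in a fixed (second) Wiener chaos, Gaussian hypercontractivity upgrades the second moments to $p$-th moments.
\item Besov embedding $B^{\alpha'}_{p,p}\subset\mathcal C^{\alpha'-1/p}$, together with the $H^{\alpha}$ bound obtained directly by summing over $k$, controls the $W^\alpha$ norm.
\item A Kolmogorov continuity argument in $t$ then yields $X_\epsilon^{\RS{lr}}\to X^{\RS{lr}}$ in $L^2(\Omega;C_TW^{\alpha})$ for some small $\alpha>0$.
\end{itemize}
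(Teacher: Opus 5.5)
Your proposal is correct and is essentially the paper's argument. Both rest on a second-chaos covariance computation via Wick's theorem and the time-integral bound $|k|^{-\gamma}(|k|^\gamma+|k_1|^\gamma+|k_2|^\gamma)^{-1}$, followed by hypercontractivity, a Kolmogorov argument and Besov embedding (the content of Proposition~\ref{prop;key computation}), and both obtain $\alpha<2\gamma-3$. The differences are only organizational: the paper first treats the stationary object $Y^{\RS{lr}}_\epsilon$, writes $X^{\RS{lr}}_\epsilon=f(D)Y^{\RS{lr}}_\epsilon(t)-f(D)P(t)Y^{\RS{lr}}_\epsilon(0)$, bounds the lattice sum by exponent splitting with Lemma~\ref{le;summary criterion}, and pays the derivative of $f(D)$ at the end. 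You instead integrate from $0$ with the factor $|k|^2$ built in and split into frequency regions, and your rate $|1-\phi(\epsilon k)|\lesssim(\epsilon|k|)^\kappa$ makes the Cauchy-in-$\epsilon$ step more explicit than in the paper.
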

Let $X_{\epsilon}^{\RS{rLlr}}=B_f(Y_\epsilon, B_f(Y_\epsilon,Y_\epsilon))$ and $u^{\epsilon,2}:=2 X_\epsilon^{\RS {rLlr}}+u^{\epsilon,3}$, then $u^{(3)}$ satisfied 
	\begin{equation}\label{eq;higher order expansion;3}
		\begin{aligned}
			\ml Lu^{(3)}=&\frac{1}{2}f(D)((u^{(3)})^2+4u^{(3)}X_\epsilon^{\RS {rLlr}}+4(X_\epsilon^{\RS {rLlr}})^2+2u^{\epsilon,3} X_\epsilon^{\RS {lr}}\\
			&+4X_\epsilon^{\RS {rLlr}}X_\epsilon^{\RS {lr}}+2Y_\epsilon X_\epsilon^{\RS {lr}}+2Y_\epsilon u^{(3)}+4Y_\epsilon 2 X_\epsilon^{\RS {rLlr}} ).
		\end{aligned}
	\end{equation}
	Also, The convergence of $X_\epsilon^{\RS {rLlr}}$ is given by following proposition
	\begin{proposition}\label{prop;order-3}
		Let $\gamma>3/2$, $\sigma=1/2$  and $f(D)=g(D)D$ where $g$ satisfies Assumption \ref{ass:g},  then for any $T>0$, there exists some $\alpha>\frac{1}{4}$ and $X^{\RS{rLlr}}\in C_TW^\alpha$, such that $X_\epsilon^{\RS {rLlr}} \to X^{\RS{rLlr}}$ in $L^2(\Omega;C_TW^\alpha)$.
	\end{proposition}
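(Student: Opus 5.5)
The plan is to work entirely in Fourier variables using Wiener chaos. The object $X_\epsilon^{\RS{rLlr}}=B_f(Y_\epsilon,X_\epsilon^{\RS{lr}})$ is cubic in the Gaussian field $Y_\epsilon$. I will split it into its third and first chaos components. Once second-moment bounds for $\Delta_j X^{\RS{rLlr}}_\epsilon(t)$ and for time increments are in hand, Gaussian hypercontractivity (equivalence of moments in a fixed chaos) and a Kolmogorov/Besov embedding argument on $\mathbb T$ give convergence in $L^2(\Omega;C_T\ml C^\alpha)$ and $L^2(\Omega;C_TH^\alpha)$, hence in $C_TW^\alpha$. This is the same route used for Proposition \ref{prop;order-1} and Proposition \ref{prop;order-2}.

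The covariance of the stationary $Y_\epsilon$ is
\[
\mathbb E[\hat Y_\epsilon(t,k)\hat Y_\epsilon(s,k')]=\delta_{k+k'=0}\,\phi(\epsilon k)^2\,\frac{|k|}{2|k|^\gamma}e^{-|k|^\gamma|t-s|}.
\]
This gives $\mathbb E|\hat Y_\epsilon(k)|^2\sim |k|^{1-\gamma}$, consistent with regularity $\frac{\gamma}{2}-1>-\frac14$. For each $k$ I write
\[
\widehat{X^{\RS{rLlr}}_\epsilon}(t,k)=\int_0^t e^{-|k|^\gamma(t-s)}ikg(k)\sum_{k_1+k_2+k_3=k}\hat Y_\epsilon(s,k_1)\int_0^s e^{-|k_{23}|^\gamma(s-r)}ik_{23}g(k_{23})\hat Y_\epsilon(r,k_2)\hat Y_\epsilon(r,k_3)\,dr\,ds,
\]
with $k_{23}=k_2+k_3$. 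The term $\hat Y(k_2)\hat Y(k_3)$ contains no diagonal contraction, because $k_{23}=0$ is killed by the factor $k_{23}$. The same mechanism is why $X^{\RS{lr}}$ needs no renormalization.

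The Wick expansion of the cubic product then has three parts.

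1. The pure third chaos term $:\!Y(k_1)Y(k_2)Y(k_3)\!:$.
2. Contractions of $k_1$ with $k_2$ (and symmetrically with $k_3$), which give a first chaos term in $\hat Y(r,k)$. Its kernel is
\[
\sum_{k_2}\frac{|k_2|}{|k_2|^\gamma}e^{-|k_2|^\gamma|s-r|}\,(k+k_2)\,g(k+k_2)\,e^{-|k+k_2|^\gamma(s-r)}.
\]
3. The contraction of $k_2$ with $k_3$, which vanishes as noted above.

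For the third chaos I bound the second moment by
\[
\sum_{k_1+k_2+k_3=k}\frac{|k|^2}{|k|^{2\gamma}}\cdot\frac{|k_{23}|^2}{(|k_{23}|^\gamma+|k_1|^\gamma)^{2}}\prod_i|k_i|^{1-\gamma}.
\]
Here each time integral $\int_0^t e^{-\lambda(t-s)}\,ds$ is traded for $\lambda^{-1}$, or $\lambda^{-1+\theta}t^\theta$ where Hölder continuity in time is needed. Using the discrete convolution bound
\[
\sum_{k_1+k_2=k}|k_1|^{-a}|k_2|^{-b}\lesssim |k|^{1-a-b}\quad\text{for }a,b<1,\ a+b>1,
\]
the power counting gives $\mathbb E|\hat X(k)|^2\lesssim |k|^{-1-2\beta}$ with $\beta$ essentially $3(\frac{\gamma}{2}-1)+2(\gamma-1)-\ldots$. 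This is exactly the heuristic $r(\tau)=\min(\alpha,2\alpha+b,3\alpha+b)+b$ with $\alpha=\frac\gamma2-1$ and $b=\gamma-1$, which gives $\beta=2\alpha+2b-\kappa=3\gamma-4-\kappa>\frac14$ because $\gamma>\frac32$. The bound for $\epsilon$-differences comes from the factor $|\phi(\epsilon k_i)-1|\lesssim (\epsilon|k_i|)^{\kappa}$, at the cost of $\kappa$ regularity.

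\textbf{Main obstacle.} The delicate step is the first chaos contraction term in item 2. The sum over $k_2$ of $|k_2|^{1-\gamma}|k+k_2|\,e^{-(|k_2|^\gamma+|k+k_2|^\gamma)(s-r)}$ is not absolutely summable uniformly in $\epsilon$ without using the time integral. First I will integrate in $r$, getting $(|k_2|^\gamma+|k+k_2|^\gamma)^{-1}$, so the summand is of size $|k_2|^{2-2\gamma}$. This is summable since $2\gamma-2>1$, which is where $\gamma>\frac32$ enters. Next I will exploit the oddness of $(k+k_2)g(k+k_2)$ in $k_2$ at leading order, by symmetrizing $k_2\mapsto -k-k_2$. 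This extracts a factor of order $|k|\,|k_2|^{1-2\gamma}$, so that the resulting multiplier acting on $Y$ is of order at most $|k|^{1-\gamma+\kappa}$ with an $\epsilon$-uniform convergent constant. No divergent renormalization is needed. If $g$ is not odd-symmetric, a constant multiple of $\hat Y(k)$ survives, but it has order $|k|\cdot|k|^{-\gamma}$ relative to $Y$. It therefore still lies in $W^{\alpha+\gamma-1-\kappa}$ with $\alpha+\gamma-1>\frac14$. Finally, I combine the two chaos bounds, the time-increment estimates and hypercontractivity to conclude.
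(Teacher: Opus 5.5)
Your overall strategy matches the paper's. You Wick-expand the cubic object and control the third-chaos part (the paper's pairings $P_1,P_2$) and the contracted first-chaos part (pairing $P_3$) through second moments. You then conclude by hypercontractivity and a Kolmogorov/Besov argument, as in Proposition \ref{prop;key computation}. Estimating $X_\epsilon^{\RS{rLlr}}$ directly, with time integrals starting at $0$, instead of going through the stationary $Y^{\RS{rLlr}}$ and recentering, is legitimate. Your observation about the contracted kernel is also correct: for $\sigma=\frac12$ it is absolutely summable after the $r$-integration ($\sum_{k_2}|k_2|^{2-2\gamma}<\infty$). So the first chaos lies in $W^{\frac{3\gamma}{2}-2-\kappa}$ without the cancellation of Lemma \ref{le;package}.

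The third-chaos step, however, is wrong as written, and the error is in the regime that actually sets the regularity. The factor $|k|^{-2\gamma}(|k_{23}|^\gamma+|k_1|^\gamma)^{-2}$ cannot be obtained. The inner integral $\int_0^s e^{-|k_{23}|^\gamma(s-r)}\,dr$ only gives $|k_{23}|^{-\gamma}$. The decorrelation $e^{-|k_1|^\gamma|s-s'|}$ acts on the outer time variables and yields at most one factor $(|k|^\gamma+|k_1|^\gamma)^{-1}$. The correct diagonal-pairing bound, the analogue of the paper's $P_1$ estimate, is
\[
\frac{|k|^2}{|k|^{\gamma}(|k|^\gamma+|k_1|^\gamma)}\cdot\frac{|k_{23}|^2}{|k_{23}|^{\gamma}(|k_{23}|^\gamma+|k_2|^\gamma+|k_3|^\gamma)}\prod_{i=1}^3|k_i|^{1-\gamma}.
\]
In the low--high regime $k_2,k_3=O(1)$, $k_1\approx k$, this is of order $|k|^{3-3\gamma}$, whereas your expression gives $|k|^{3-5\gamma}$.

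Summing over $k_2+k_3=k_{23}$ gives the spectrum $|k_{23}|^{5-4\gamma}$ of $X^{\RS{lr}}$. Since $4\gamma-5>1$, your convolution bound (stated only for exponents below $1$) does not apply. The convolution with $|k_1|^{1-\gamma}$ is of size $|k|^{1-\gamma}$, dominated by $|k_{23}|=O(1)$. Hence $\mathbb E|\widehat{X^{\RS{rLlr}}_\epsilon}(t,k)|^2\lesssim|k|^{3-3\gamma}$, i.e.\ regularity $\frac{3\gamma}{2}-2-\kappa$. The other regimes give at most $|k|^{9-7\gamma}$ (up to logarithms), which is smaller because $\gamma>\frac32$.

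Your own heuristic says the same thing. Since $\alpha=\frac\gamma2-1<0<2\alpha+b=2\gamma-3$, one has $\min(\alpha,2\alpha+b,3\alpha+b)+b=\alpha+b=\frac{3\gamma}{2}-2$. That is not $2\alpha+2b=3\gamma-4$, nor $3\alpha+2b$. Your value $\beta=3\gamma-4$ would give $\frac12$ at $\gamma=\frac32$, so it overstates the regularity. It also hides the fact that the paraproduct regime is exactly where $\gamma>\frac32$ is sharp; the correct value coincides with the exponent you found for the contracted term.

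The proposition still follows, because $\frac{3\gamma}{2}-2>\frac14$ precisely when $\gamma>\frac32$. But you must replace your third-chaos bound by the one above and redo the power counting with it.
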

	    Then $u^{\epsilon,3}$ has limit in well-defined space.  
\subsection{Convergence of Gaussian trees}
In the latter part of this chapter, we will prove Theorem \ref{th;main result}. We reiterate that the essence of this result is to demonstrate that the subcritical GFSB \eqref{eq;gRBE;subcritcal}, under specific enhance data, can represent the limit of solutions to the approximating Equation \eqref{eq;Burgers;app}. Before our analysis, we introduce a notation to simplify the representation of regular set in Definition \ref{def;regular sequence}.
\par For our convenience, define $\tau^*=\RS{n}$ and $\overline {\ml T}$ is generated by $\tau^*$ as Definition \ref{def;regular sequence}, Moreover
\begin{align*}
	(\RS{n}\cdot \RS{n})=\RS{lr},\quad (\RS{n}\cdot \RS{lr})=(\RS{lr}\cdot \RS{n})=\RS{rLlr},\quad (\RS{rLlr}\cdot \RS{n})=\RS{rLrLlr},\quad (\RS{lr}\cdot \RS{lr})=\RS{{LL{lr}}{RR{lr}}}.
\end{align*}
and
\begin{align*}
	c(\RS{n})=0.\quad c(\RS{lr})=1,\quad c(\RS{rLlr})=2.
\end{align*}
The following Lemma is crucial in our argument 
\begin{lemma}\label{le;summary criterion}
	If $\alpha+\beta>1$, then $\sum_{k\neq \{a,0\}}\frac{1}{\vert k-a\vert^{\alpha}\vert k\vert^{\beta}}<\infty$ uniformly for $a\in \mathbb{Z}_*$.
	\begin{proof}
		Fix $R=\max(\vert a\vert,1)$. Define two sets $A=\{k:\vert k\vert\leq 2R,k\neq 0,a\}$ and $B=\{k:\vert k\vert>2R\}$. On the domain $B$, we have $\vert k-a\vert\geq \vert k\vert-\vert a\vert\geq \vert k\vert-R\geq \frac{\vert k\vert}{2}$, then the estimate
		\begin{align*}
			\sum_{k\in B}\frac{1}{\vert k-a\vert^{\alpha}\vert k\vert^{\beta}}\leq \sum_{k\neq 0}\frac{1}{(\vert k\vert/2)^{\alpha}\vert k\vert^{\beta}}<\infty.
		\end{align*}
		On the domain $A$,
		\begin{align*}
        \sum_{k\in A}\frac{1}{\vert k-a\vert^{\alpha}\vert k\vert^{\beta}}=&\sum_{k\in A,\vert k\vert\leq \vert k-a\vert}\frac{1}{\vert k-a\vert^{\alpha}\vert k\vert^{\beta}}+\sum_{k\in A,\vert k\vert> \vert k-a\vert}\frac{1}{\vert k-a\vert^{\alpha}\vert k\vert^{\beta}}\\
        \leq &\sum_{k\neq 0}\frac{1}{\vert k\vert^{1+}}+\sum_{k\neq {a}}\frac{1}{\vert k-a\vert^{1+}}\\
        \leq &\sum_{k\neq 0}\frac{1}{\vert k\vert^{1+}}+\sum_{k'\neq {0}}\frac{1}{\vert k'\vert^{1+}}.
		\end{align*}
		We have two sums, each possessing an upper bound independent of $a$, therefore the upper bound for the original sum is also independent of $a$. 
	\end{proof}
\end{lemma}
\subsection*{Construction of \texorpdfstring{$Y^{\RS{lr}}$ }}
We consider a more general setting. Let $Y_\epsilon$ be the solution to
\begin{equation}
    \partial_tY_\epsilon-\Lambda^\gamma Y_\epsilon
    =|D|^\sigma\xi_\epsilon .
\end{equation}
Recall that
\[
B_f(Y_\epsilon,Y_\epsilon)
=
\int_0^tP(t-s)f(D)(Y_\epsilon^2)\,\mathrm ds,
\]
we rewrite the above expression as
\begin{equation}\label{eq;X Y;order-2}
B_f(Y_\epsilon,Y_\epsilon)
=
f(D)Y_\epsilon^{\RS{lr}}(t)
-
f(D)P(t)Y_\epsilon^{\RS{lr}}(0),
\end{equation}
where
\[
Y_\epsilon^{\RS{lr}}(t)
=
\sum_{k\in\mathbb Z_*}
\int_{-\infty}^t
e^{-(t-s)|k|^\gamma}
\widehat{Y_\epsilon^2}(s,k)\,
\mathrm ds\,e^{ikx}.
\]
Here $\mathbb Z_*=\mathbb Z\setminus\{0\}$, since the zero Fourier mode does not contribute to the nonlinear term. where we remove the zero frequency since the zero Fourier mode does not contribute. Therefore, it suffices to show the convergence of $Y_\epsilon^{\RS{lr}}(t)$. The main objective of this section is to establish the convergence of 
\[
Y_\epsilon^{\RS{lr}}\in L^2(\Omega; C_TW^\alpha\cap C_T^\beta W^0)
\]
for $\gamma>\frac{3}{2}$.
 %Our work is based on Proposition \ref{prop;key computation} and following Wick's theorem
%\begin{proposition}\label{prop;Wick th}
	%Let $\ml T$ be a finite index set, and let $\{X_\alpha\}_{\alpha\in\ml T}$ be a a collection of real or complex-valued centered jointly Gaussian random variables. Then,
	%\begin{equation}
		%\m E \prod_{\alpha\in\ml T}X_\alpha=\sum_{P\in\ml P(\ml T)}\prod_{\{\alpha,\beta\}\in %P}\m E[X_\alpha X_\beta].
	%\end{equation}
%\end{proposition}

\begin{proposition}\label{prop;regularity;2 order}
	Let $\gamma\in(1,2]$, $\sigma\leq\gamma-1/2$, then for any $T>0$, $\alpha<2\gamma-2\sigma-1$ and $\beta<\min(1/2,(2\gamma-2\sigma-1)/\gamma)$, there exist $Y^{\RS{lr}}\in C_TW^\alpha\cap C_T^{\beta}W^0$ a.s. , such that $Y^{\RS{lr}}_\epsilon \to Y^{\RS{lr}}$ in $L^2(\Omega;C_TW^\alpha\cap C_T^{\beta}W^{\alpha-\beta\gamma})$.
	\begin{proof}
   \par Denote $\m Z_*=\m Z\backslash\{0\}$, $L=(l,m)\in \m Z_*^2$, $C_\epsilon(L)=\phi(\epsilon l)\phi(\epsilon m)$, $\psi(L)=l+m$, then $Y^{\RS{lr}}_\epsilon(t)$ can be written as 
   	\begin{equation}\label{eq;2-order;rep}
   		Y^{\RS{lr}}_{\epsilon}(t)=\sum_{L\in \m Z_*^2}C_\epsilon(L)h_L(t)e_L(x).
   	\end{equation}
 where $e_L(x)=e^{i\psi(L)x}$, $h_L(t)$ satisfies
   	\begin{align*}
   		h_L(t)=\int_{-\infty}^t e^{-(t-\tau)\vert \psi(L)\vert^\gamma}Y_{l}Y_{m}\mathrm{d}\tau,
   	\end{align*}
   	Define  $H_{L,L'}(t,t')=\m E[h_L(t)h_{L'}(t')]$ and $\hat{H}_{L,L'}(t,s)=\m E[(h_{L}(t)-h_{L'}(s))(h_{L}(t)-h_{L'}(s))]$, $\psi(L)=|l+m|$.  According to Proposition \ref{prop;key computation}, it remains to establish two estimates for $H_{L,L'}(t,t)$ and $\hat{H}_{L,L'}(t,s)$ for $0<s<t\leq T$.
    Notice that 
    \begin{align*}
        H_{L,L'}(t,t')=\ml I_{L,L'}(Y_lY_m\otimes Y_{l'}Y_{m'})(t,t').
    \end{align*}
    Applying Wick's theorem, we obtain
    \begin{align*}
        \m E[(Y_lY_m\otimes Y_{l'}Y_{m'})(\tau,r)]=|l|^{2\sigma-\gamma}|m|^{2\sigma-\gamma}2\pi\delta_{k,k'}(\delta_{l,-m'}+\delta_{l,-l'})e^{-\vert \tau-r\vert\vert l\vert^\gamma}e^{-\vert \tau-r\vert\vert m\vert^\gamma},
    \end{align*}
    Consequently, Lemma B.1 yields the estimate
    \begin{align*}
        |H_{L,L'}(t,t')|\lesssim(\frac{1}{|l+m|^\gamma (|l+m|^\gamma +|l|^\gamma+|m|^\gamma)}\wedge\frac{e^{-(|l+m|^\gamma \wedge (|l|^\gamma+|m|^\gamma)|t-t'|)}}{|l+m|^\gamma||l+m|^\gamma-|l|^\gamma-|m|^\gamma|})|l|^{2\sigma-\gamma}|m|^{2\sigma-\gamma}.
    \end{align*}
    Then by Lemma \ref{le;summary criterion}, one can choose $\mk l_1,\mk l_2,\mk l_3\in[0,1]$ such that for $a<2\gamma-2\sigma-1$, $\sigma<\gamma-1/2$ , we have
      \begin{align*}
   	\sum_{\substack{L,L'\in\m Z^2_*\\|\psi(L)|=|\psi(L')|\neq0}}\sup_{t\in[0,T]}\vert H_{L,L'}(t,t)\vert\psi(L)\vert^a\vert \psi(L')\vert^a=&\sum_{l,m\in\m Z_*}\frac{\vert l+m\vert^{2a}}{\vert l+m\vert^\gamma(\vert l+m\vert^\gamma+\vert l\vert^\gamma+\vert m\vert^\gamma)|l|^{\gamma-2\sigma}|m|^{\gamma-2\sigma}}\\
   	\lesssim\sum_{m\in\m Z_*}\frac{|m|^{2\sigma}}{|m|^{\mk l_3\gamma+\gamma}}&\sum_{l\in\m Z_*\backslash\{m\}}\frac{|l+m|^{2a}|l|^{2\sigma}}{|l+m|^{\mk l_1\gamma+\gamma}|l|^{\mk l_2\gamma+\gamma}}<\infty,
   \end{align*}
    where 
    \begin{align*}
       & \mk l_1+\mk l_2+\mk l_3=1,\quad  \mk l_3\gamma+\gamma> 2\sigma+1,\\
       & \mk l_1\gamma+\gamma \geq 2a,\quad\mk l_2\gamma+\gamma\geq 2\sigma,\quad (\mk l_1+\mk l_2+2)\gamma>2a+2\sigma+1. 
    \end{align*}
 We next turn to the covariance increment, which determines the temporal regularity of the stochastic process. A direct computation gives
   \begin{equation}\label{eq;relation;H}
   	\hat H_{L,L}(t,s)=H_{L,L}(t,t)+H_{L,L'}(s,s)-H_{L,L'}(t,s)-H_{L,L'}(s,t).
   \end{equation}
Noting that $H_{L,L'}(t,s)=\ml I_{L,L'}(\m E[Y_{l}Y_m\otimes Y_{l'}Y_{m'}])(t,s)$, and 
\begin{align*}
    \m E[Y_lY_m\otimes Y_{l'}Y_{m'}](\tau,r)=(\delta_{l,-m'}+\delta_{l,-l'})e^{-|\tau-r|(|l|^\gamma+|m|^\gamma)}|l|^{2\sigma-\gamma}|m|^{2\sigma-\gamma}.
\end{align*}
Applying Lemma B.1 again, we obtain
  \begin{align*}
      |\widehat H_{L,L'}(t,s)|\leq |l|^{2\sigma-\gamma}|m|^{2\sigma-\gamma}\frac{1\wedge |l+m|^\gamma |t-s|}{|l+m|^\gamma(|l+m)^\gamma+|l|^\gamma+|m|^\gamma)}.
  \end{align*}
  
   one can choose the same $\mk l_1,\mk l_2,\mk l_3\in[0,1]$ above, such that for $\beta\leq \min (1/2,a/\gamma)$, $\sigma<\gamma-1/2$ and $c= a-\beta\gamma$, we have
   	\begin{align*}
   		\sum_{\substack{l,m\in\m Z_*\\ l+m\neq 0}}|\psi(L)|^c|\psi(L')|^c\frac{ |\widehat H_{L,L'}(t,s)|}{\vert t-s\vert^{2\beta}}
   		\lesssim &\sum_{\substack{l,m\in\m Z_*\\ l+m\neq 0}}\frac{|l+m|^{2\beta\gamma+2c}}{\vert l+m\vert^\gamma(\vert l+m\vert^\gamma+\vert l\vert^\gamma+\vert m\vert^\gamma)|l|^{\gamma-2\sigma}|m|^{\gamma-2\sigma}}\\
   	\lesssim&\sum_{m\in\m Z_*}\frac{|m|^{2\sigma}}{|m|^{\mk l_3\gamma+\gamma}}\sum_{l\in\m Z_*\backslash\{m\}}\frac{|l+m|^{2a}|l|^{2\sigma}}{|l+m|^{\mk l_1\gamma+\gamma}|l|^{\mk l_2\gamma+\gamma}}<\infty.
    \end{align*}
   Therefore, Proposition \ref{prop;key computation} implies the desired estimate for $\alpha<a$.
	\end{proof}
\end{proposition}
\subsection*{Construction of $Y^{\RS{rLlr}}$}
\par Similar as the convergence of $B_f(Y_\epsilon,Y_\epsilon)$, our target is focusing on the convergence of
\begin{equation}
	Y^{\RS{rLlr}}_\epsilon(t)=\int_{-\infty}^t P(t-s)(Y_\epsilon \cdot B_f(Y_\epsilon,Y_\epsilon))\mathrm{d}s.
\end{equation}
 The main strategy is similar as the above argument which use the convergence theorem \ref{prop;key computation} but more complex. For our convenience, we introduce some notation.
\par Let $L=(k,l,m)\in \m Z_*^3$, $C_L(\epsilon)=\phi(\epsilon k)\phi(\epsilon l)\phi(\epsilon m)$. Define the integral operator $\ml I_{L,L'}$ as 
\begin{equation}\label{eq;operator;I}
	\ml I_{L,L'}f(t,t')=\int_{-\infty}^{t}\int_{-\infty}^{t'} e^{-\vert \psi(L)\vert^\gamma(t-s)}e^{-\vert \psi(L')\vert^\gamma(t'-s')}\vert f(s,s')\mathrm{d}s'\mathrm{d}s,
\end{equation} 
and define the tensor product as $f\otimes g(t,t')=f(t)g(t')$.  we rewrite $Y_\epsilon^{\RS{rLlr}}$ as 
\begin{align*}
	Y_\epsilon^{\RS{rLlr}}=\sum_{L\in \mathbb{Z}^3}C_L^\epsilon h_{L}(t)e_L(x),
\end{align*}
and fix $H_{L,L'}(t,t')=\m E[h_{L}(t)h_{L'}(t')]$ and $\hat H_{L,L'}(t,t')=\m E[(h_{L}(t)-h_{L}(t'))(h_{L'}(t)-h_{L'}(t'))]$, for $0\leq t\leq T$ and $0\leq t'\leq T'$, we have the representation
	\begin{equation}\label{eq;tensor; 3-order}
	H_{L,L'}(T,T')=\m E\bigg[\ml I_{L,L'}\bigg(Y_k\otimes Y_k'\cdot \ml I_{L_{\downarrow},L'_{\downarrow}} (f(\psi(L_\downarrow))f(\psi(L'_\downarrow))(Y_lY_m)\otimes (Y_{l'}Y_{m'}))(t,t')\bigg)(T,T')\bigg],
\end{equation}
where $L_\downarrow=(l,m)$ and $L'_{\downarrow}=(l',m')$ is the tail of $L,L'$.
\begin{proposition}\label{prop;regularity;3 order}
	Let $\gamma>1$ and $\gamma-\sigma>1/2$, then for any $T>0$,  $\alpha<(7\gamma-6\sigma-5)/2$ and $\beta<\min(1/2,(7\gamma-6\sigma-6)/2\gamma$, there exists $Y^{\RS{rLlr}}\in C_TW^\alpha\cap C_T^\beta W^0$ a.s. such that $Y_\epsilon^{\RS{rLlr}} \to Y^{\RS{rLlr}}$ as $\epsilon \to 0$ in $L^2(\Omega;C_TW^\alpha\cap C_T^\beta W^0)$.
	\begin{proof}
		According to Proposition \ref{prop;key computation}, it remains to establish two estimates for $H^{\RS{rLlr}}_{L,L'}(t,t)$ and $\hat{H}^{\RS{rLlr}}_{L,L'}(t,s)$ for $0<s<t\leq T$. It is not difficult to observe that
		\begin{align*}
			&\m E[[\ml I_{L,L'}f(t,t')]=\ml I_{L,L'}(\m Ef)(t,t'),\\
			&Y_1\otimes Y_2(t,t')\cdot Y_3\otimes Y_4(s,s')=Y_1\otimes Y_3(t,s)\cdot Y_2\otimes Y_4(t',s')=Y_1\otimes Y_4(t,s')\cdot Y_2\otimes Y_3(t',s).
		\end{align*}
        It follows from \eqref{eq;tensor; 3-order} and Proposition \ref{prop;resonant convergence} that $H_{L,L'}(t,t') $ can be represent as 
        \begin{equation}
            H_{L,L'}(t,t')=\sum_{i=1}^3 \ml I_{L,L'}R_{L,L'}(P_i;t,t')=\sum_{i=1}^3 H_{L,L'}(P_i;T,T').
        \end{equation}
For $H_{L,L'}(P_1;T,T')$, we have the fact that $\vert \psi (L)\vert=\vert \psi (L')\vert=\vert k+l+m\vert$, by Lemma \ref{le; H-hat;1} and proof in Proposition \ref{prop;resonant convergence}, we have 
\begin{align*}
	\vert H_{L,L'}(P_1;T,T')\vert=&\vert \ml I_{L,L'}R_{L,L'}(P_1;T,T')\vert\\
	\leq&\frac{1}{\vert k+l+m\vert^{\gamma}(\vert k+l+m\vert^{\gamma}+|l+m|^\gamma+\vert k\vert^\gamma)}\cdot \frac{g^2(l+m)\vert (l+m)\vert^2\vert k\vert^{2\sigma-\gamma}\vert l\vert^{2\sigma-\gamma}\vert m\vert^{2\sigma-\gamma}}{\vert l+m\vert^\gamma(\vert l+m\vert^\gamma+(\vert l\vert^\gamma+\vert m\vert^\gamma ))},
\end{align*}
and 
\begin{align*}
	\vert \hat H_{L,L'}(P_1;T,T')\vert\leq\frac{1\wedge\vert k+l+m\vert^\gamma \vert T-T'\vert}{\vert k+l+m\vert^{\gamma}(\vert k+l+m\vert^{\gamma}+|l+m|^\gamma+\vert k\vert^\gamma)}\frac{g^2(l+m)\vert (l+m)\vert^2\vert k\vert^{2\sigma-\gamma}\vert l\vert^{2\sigma-\gamma}\vert m\vert^{2\sigma-\gamma}}{\vert l+m\vert^\gamma(\vert l+m\vert^\gamma+(\vert l\vert^\gamma+\vert m\vert^\gamma ))},
\end{align*}
for any $\overline{T}>0$, one can choose $\mathfrak k_1,\mathfrak k_2,\mathfrak k_3\in[0,1]$ and $\mathfrak l_1,\mathfrak l_2,\mathfrak l_3\in[0,1]$ such that for $a<(7\gamma-6\sigma-5)/2$ and  $\sigma <\gamma-1/2$ we have
\begin{align*}
	\sum_{\substack{L,L'\in \mathbb{Z}^3_*\\\psi(L_\downarrow)\neq0,\psi(L'_\downarrow)\neq0\\\psi(L)=\psi(L')\neq 0}}&\vert \psi(L)\vert^{a}|\psi(L')|^a \sup_{0<T\leq\overline{T}}\vert H_{L,L'}(P_1;T,T)\vert\\
	\lesssim&\sum_{m\in \mathbb{Z}_*}\frac{|m|^{2\sigma}}{|m|^{\mathfrak{l}_3\gamma+\gamma}}\sum_{l\in \mathbb{Z}_*\backslash\{m\}}\frac{|l+m|^2|l|^{2\sigma}}{|l+m|^{\mathfrak{l_1\gamma+\mathfrak k_2\gamma+\gamma}}|l|^{\mathfrak{l_2\gamma+\gamma}}}\sum_{k\in \mathbb{Z}_*\backslash\{l+m\}}\frac{|k+l+m|^{2a}|k|^{2\sigma}}{\vert k+l+m\vert^{\mathfrak{k}_1\gamma+\gamma}|k|^{\mathfrak{k}_3\gamma+\gamma}}<\infty,
\end{align*}
where
\begin{align*}
    \mathfrak k_1+\mathfrak k_2+\mathfrak k_3=1,\quad \mathfrak l_1+ \mathfrak l_2+\mathfrak l_3=1,\quad \mk l_3\gamma+\gamma>2\sigma+1,\\
   \mk k_1\gamma+\gamma\geq 2,\quad \mk k_3\gamma+\gamma\geq 2\sigma\quad (\mk k_1+\mk k_3+2)\gamma>2\sigma+2a+1,\\
   \mk (l_1+\mk k_2+1)\gamma\geq 2\quad \mk l_2\gamma+\gamma\geq 2\sigma,\quad \mk (l_1+\mk k_2+2+\mk l_2)\gamma\geq 3+2\sigma.\\
\end{align*}
\par We next turn to the covariance increment. Choosing the same index for the estimate in $H_{L,L'}(P_1;T,T)$, for $\beta\leq \min (1/2,(7\gamma-6\sigma-5)/\gamma)$ , $c=a-\beta\gamma$ and $\sigma<\gamma-1/2$, we have

\begin{align*}
	\sum_{\substack{L\in \mathbb{Z}^3_*\\\psi(L_\downarrow)\neq0,\psi(L'_\downarrow)\neq0\\\psi(L)=\psi(L')\neq 0}}&\vert \psi(L)\vert^{a-\beta\gamma}|\psi(L')|^{a-\beta\gamma}\sup_{0<T,T'\leq\overline{T}}\frac{\vert \hat H_{L,L'}(P_1;T,T')\vert}{\vert T-T'\vert^{2\beta}}\\
	\lesssim&\sum_{m\in \mathbb{Z}_*}\frac{|m|^{2\sigma}}{|m|^{\mathfrak{l}_3\gamma+\gamma}}\sum_{l\in \mathbb{Z}_*\backslash\{m\}}\frac{|l+m|^2|l|^{2\sigma}}{|l+m|^{\mathfrak{l_1\gamma+\mathfrak k_2\gamma+\gamma}}|l|^{\mathfrak{l_2\gamma+\gamma}}}\sum_{k\in \mathbb{Z}_*\backslash\{l+m\}}\frac{|k+l+m|^{2a}|k|^{2\sigma}}{\vert k+l+m\vert^{\mathfrak{k}_1\gamma+\gamma}|k|^{\mathfrak{k}_3\gamma+\gamma}}<\infty.
\end{align*}
\par For $ H_{L,L'}(P_2;T,T')$, the fact that $\vert \psi(L)\vert=\vert \psi(L')\vert=\vert k+l+m\vert$ is also valid, then by Lemma \ref{le; H-hat;1} and proof in Proposition \ref{prop;resonant convergence}, we have
\begin{align*}
     \vert H_{L,L'}(P_2;T,T')\vert\leq&\frac{|k|^{2\sigma}}{|k+l+m|^\gamma(|k+l+m|+|k|^\gamma\wedge|l|^\gamma)|k|^{\gamma}|l|^\gamma|m|^\gamma}\\
     &\cdot\frac{|l+m||k+m||l|^{2\sigma}|m|^{2\sigma}}{(\vert l+m\vert^\gamma+\vert l\vert^\gamma)(\vert k+m\vert^\gamma+\vert k\vert^\gamma)+\vert m\vert^\gamma(\vert l+m\vert^\gamma+\vert l\vert^\gamma)\wedge(\vert k+m\vert^\gamma+\vert k\vert^\gamma)}
\end{align*}
and 
\begin{align*}
    |\hat H_{L,L'}(P_2;T,T')|\lesssim& \frac{|k|^{2\sigma}(1\wedge |k+l+m|^\gamma|T-T'|)}{|k+l+m|^\gamma(|k+l+m|+|k|^\gamma\wedge|l|^\gamma)|k|^{\gamma}|l|^\gamma|m|^\gamma}\\
   & \cdot\frac{|l+m||k+m||l|^{2\sigma}|m|^{2\sigma}}{(\vert l+m\vert^\gamma+\vert l\vert^\gamma)(\vert k+m\vert^\gamma+\vert k\vert^\gamma)+\vert m\vert^\gamma(\vert l+m\vert^\gamma+\vert l\vert^\gamma)\wedge(\vert k+m\vert^\gamma+\vert k\vert^\gamma)}.
\end{align*}
Since $l$ and $k$ are symmetric, without loss of generality, assume that $|k|\geq|l|$ and $|l+m|^\gamma +|l|^\gamma\geq |k+m|^\gamma+|k|^\gamma$, by Lemma \ref{le;summary criterion},  one can choose $0\leq \mathfrak k_1,\mathfrak k_2,\mathfrak l_1,\mathfrak l_2,\mathfrak l_3,\mathfrak m_1,\mathfrak m_2\leq 1$ such that for $\bar T>0$, $\gamma\in(1,2]$, $a<(7\gamma-6\sigma-5)/2$ and $\sigma\leq\gamma-1/2$, we have
\begin{align*}
	\sum_{\substack{L,L'\in \mathbb{Z}^3_*\\\psi(L_\downarrow)\neq0,\psi(L'_\downarrow)\neq0\\\psi(L)=\psi(L')\neq 0}}&\vert \psi(L)\vert^{a}\vert \psi(L')\vert^{a}\sup_{0<T\leq\overline{T}} \vert H_{L,L'}(P_2;T,T')\vert\\
	=&\sum_{\substack{L,L'\in \mathbb{Z}^3_*\\\psi(L_\downarrow)\neq0,\psi(L'_\downarrow)\neq0\\\psi(L)=\psi(L')\neq 0}}\frac{|k+l+m|^{2a}|k|^{2\sigma}}{|k+l+m|^\gamma(|k+l+m|+|k|^\gamma\wedge|l|^\gamma)|k|^{\gamma}|l|^\gamma|m|^\gamma}\cdot\frac{|l+m||k+m||l|^{2\sigma}|m|^{2\sigma}}{(|l+m|^\gamma+|l|^\gamma+|m|^\gamma)(|k+m|^\gamma+|k|^\gamma)} \\
\lesssim&\sum_{l\in \mathbb{Z}_*}\frac{|l|^{2\sigma}}{|l|^{\gamma\mk k_2}|l|^{\gamma l_2}|l|^{\gamma\mk m_2+\gamma-2\sigma}}\sum_{k\in \mathbb{Z}_*\backslash\{m\}}\frac{|k+m||m|^{2\sigma}}{|k+m|^{\gamma\mk m_1}|m|^{\mk l_3\gamma+\gamma}}\sum_{k\in \mathbb{Z}_*\backslash\{l+m\}} \frac{|k+l+m|^{2a}|l+m|}{|k+l+m|^{\gamma(1+\mathfrak k_1)}|l+m|^{\mk l_1\gamma}} ,
\end{align*}
where 
\begin{align*}
\mk k_1+\mk k_2=1,\quad \mk l_1+\mk l_2+\mk l_3=1,\quad \mk m_1+\mk m_2=1,\\
    2a\leq \gamma(1+\mk k_1),\quad 1\leq \gamma \mk l_1,\quad \gamma(1+\mk k_1)+\mk l_1\gamma>2a+2,\\
    1\leq \mk m_1\gamma,\quad 2\sigma\leq \mk l_3\gamma+\gamma\quad \mk m_1\gamma+\mk l_3\gamma+\gamma>2\sigma+2,\\
    \gamma \mk m_2+\gamma\geq 2\sigma,\quad \mk (m_2+2+\mk k_2+\mk l_2)\gamma >4\sigma+1.
\end{align*}
Choosing the same index for the estimate in $H_{L,L'}(P_2;T,T)$,  for $b\leq \min (1/2,(7\gamma-6\sigma-5)/\gamma)$,  $c=a-\beta\gamma$ and $\sigma<\gamma-1/2$, we have
\begin{align*}
	\sum_{\substack{L,L'\in \mathbb{Z}^3_*\\\psi(L_\downarrow)\neq0,\psi(L'_\downarrow)\neq0\\\psi(L)=\psi(L')\neq 0}}&\vert \psi(L)\vert^{a-\beta\gamma}|\psi(L')|^{a-\beta\gamma}\sup_{0<T,T'\leq \overline{T}}\frac{\hat H_{L,L'}(P_2;T,T')}{\vert T-T'\vert^{2\beta}}<\infty.
\end{align*}
 \par For $H_{L,L'}(P_3;T,T')$, under the pairing $P_3$, we have $\vert \psi(L)\vert=\vert \psi(L')\vert=\vert m\vert$ is independent of $k,k'$, Lemma \ref{prop;resonant convergence} lead that for any $a<(7\gamma-6\sigma-5)/2$ and $\sigma<\frac{3}{2}\gamma-\frac{3}{2}$, we have 
 \begin{align*}
 	\sum_{\substack{L,L'\in \mathbb{Z}^3_*\\\psi(L_\downarrow)\neq0,\psi(L'_\downarrow)\neq0\\\psi(L)=\psi(L')\neq 0}}1_{\psi(L_{\downarrow})\neq 0}&\vert  m\vert^{2a}\sup_{0<T\leq \overline{T}}H_{L,L'}(P_3;T,T)\\
 	=&\sum_{m\in \mathbb{Z}_*}\vert m\vert^{2a}\ml I_{m,m'}\sum_{k,k'\in \mathbb{Z}_*}R_{L,L'}(P_3;T,T)\\
 	\lesssim &\sum_{m\in \mathbb{Z}_*}\frac{\vert m\vert^{2a}}{\vert m\vert^{a'+2\gamma}}<\infty.
 \end{align*}
 where 
 \begin{align*}
a'<5\gamma-4-6\sigma,\quad a'+2\gamma>1+2a.
 \end{align*}
 Similarly, we have
 \begin{align*}
 	\sum_{\substack{L,L'\in \mathbb{Z}^3_*\\\psi(L_\downarrow)\neq0,\psi(L'_\downarrow)\neq0\\\psi(L)=\psi(L')\neq 0}}&\vert \psi(L)\vert^{a-\beta\gamma}|\psi(L')|^{a-\beta\gamma}\sup_{0<T,T'\leq \overline{T}}\frac{\hat H_{L,L'}(P_3;T,T')}{\vert T-T'\vert^{2\beta}}<\infty.
 \end{align*}
Combining above estimates, the condition of Proposition \ref{prop;key computation} is holds, which yields the result.
	\end{proof}	
\end{proposition}
\par For higher order expansion like $Y_\epsilon^{\RS{rLrLlr}}$ and $Y_\epsilon^{\RS{{LL{lr}}{RR{lr}}}}$, it can be definition by para-product decomposition by Lemma \ref{le;est;paraproduct} and obviously converge in $L^2(\Omega;C_TH_p^{\frac{1}{2}+})$ We summarize the regularity of Gaussian trees as following
	Finally we re-centering the Gaussian trees by following corollary.

		\begin{proof}[Proof of Proposition \ref{prop;order-2} and \ref{prop;order-3}]
			Let $\gamma>3/2$ and $\sigma=1/2$,  noting that 
            \begin{align*}
                2\gamma-2\sigma-1>1,\quad (2\gamma-2\sigma-1)/\gamma>2/3,\quad (7\gamma-6\sigma-5)/2>5/4, \quad (7\gamma-6\sigma-5)/\gamma>5/6.
            \end{align*}
it then follows from Proposition \ref{prop;regularity;2 order} and \ref{prop;regularity;3 order}  that, for any $T>0$ there exists $\delta>0$ such that we have  $Y\in C_TW^{-1/4+\delta}$, $Y^{\RS{lr}}\in C_TW^{1+\delta}$ and $Y^{\RS{rLlr}}\in C_TW^{5/4+\delta}$. By \eqref{eq;X Y;order-2} and Assumption \ref{ass:g}, we have
\begin{align*}
    X^{\RS{lr}}_\epsilon(t) =f(D)Y^{\RS{lr}}_\epsilon(t)-f(D)P(t)Y^{\RS{lr}}_\epsilon(0).
\end{align*}
Consequently, $X_\epsilon^{\RS{lr}}\to X^{\RS{lr}}$ in $L^2(\Omega; C_TW^{\delta+})$. For $X_\epsilon^{\RS{rLlr}}$, define $Z_\epsilon^{\RS{rLlr}}:=Y_\epsilon^{\RS{rLlr}}-X_\epsilon^{\RS{rLlr}}$, then hen

$$
\ml L Z_\epsilon^{\RS{rLlr}}
=
\frac12 f(D)
\big(P(t)Y_\epsilon^{\RS{lr}}(0)Y_\epsilon\big).
$$

For every \(t>0\), the function \(P(t)Y^{\RS{lr}}(0)\) is smooth. Hence, using$
Y_\epsilon\longrightarrow Y$ in $C_TW^{-1/4+\alpha_2}$, together with the boundedness of \(f(D)\), we obtain

$$
\ml L Z_\epsilon^{\RS{rLlr}}
\longrightarrow
\frac12 f(D)
\big(P(t)Y^{\RS{lr}}(0)Y\big)
$$

in the corresponding function space. It follows that

$$
Z_\epsilon^{\RS{rLlr}}
\longrightarrow
Z^{\RS{rLlr}}
\qquad\text{in }L^2(\Omega;C_TW^{\alpha_2}).
$$

Since $X_\epsilon^{\RS{rLlr}}(0)=0$, we conclude that

$$
X_\epsilon^{\RS{rLlr}}
\longrightarrow
X^{\RS{rLlr}}
\qquad\text{in }L^2(\Omega;C_TW^{\alpha_2}).
$$

	\end{proof}

\begin{lemma}\label{le;convergence;probability}
Let  $X_\epsilon(\omega) ,Y(\omega):\Omega\to \m R$, satisfied $X_\epsilon(\omega)\to0$ in probability sense namely, for any $\delta>0$
\begin{align*}
    \lim_{\epsilon\to0}\m P(|X_\epsilon|>\delta)=0
\end{align*}
and $Y(\omega)<\infty$, $\m P$-a.s, then we have $X_\epsilon Y\to0$ in probability sense. 
	\begin{proof}
	Fix $\eta\in(0,1]$. Since $Y<\infty$ a.s., we can choose $N=N(\eta)$ sufficient large such that $\m P(Y\leq N)>1-\eta$ and $\m P(Y>N)\leq\eta$.  Then for any $\delta>0$,
		\begin{align*}
			\m P(|X_\epsilon Y|>\delta)\leq& \m P(|X_\epsilon Y|>\delta,|Y|\leq N)+\m P(|Y|>N)\\
			\leq &\m P(|X_\epsilon| >\frac{\delta}{N})+\m P(|Y|>N)\leq \m P(|X_\epsilon| >\frac{\delta}{N})+\eta.
		\end{align*}
Since $\lim_{\epsilon\to0}\m P(|X_\epsilon| >\frac{\delta}{N})=0$, therefore $\lim_{\epsilon\to0}\m P(|X_\epsilon Y|>\delta)\leq \eta$. Since $\eta>0$ is arbitrary, we conclude that $\lim_{\epsilon\to0}\m P(|X_\epsilon Y|>\delta)$.
\end{proof}
	\end{lemma}
    Finally, we finish our proof of main result. For our complete, we define $X^\tau$ $\tau\in\ml T\backslash\{\RS{n},\RS{lr},\RS{rLlr}\}$ as $X^{\tau}=B_f(X^{\tau_1},X^{\tau_2})$ where $\tau=\tau_1\cdot \tau_2$.
\begin{proof}[Proof of Theorem \ref{th;main result}]
		Let $u_0\in W^s$, $\ml T=\ml T(\RS{n})$ and $b=\gamma-1$. By Proposition \ref{prop;order-1},  \ref{prop;order-2} and \ref{prop;order-3}, there exists $b$-order regular function on $\ml T$ such that 
        \begin{equation}\label{eq;X adapted}
             r(\RS{n})=\alpha>-1/4,\quad X^\tau\in C_TW^{r(\tau)},\quad for\: \tau\in \ml T(\RS{n}),\: T>0.
        \end{equation}
        Let $V_\alpha(T)=C_TW^{\alpha}$, given
        \begin{align*}
             \m X=\{Y=X^{\RS{n}},X^{\RS{lr}},X^{\RS{rLlr}}\}
        \end{align*}
    where $Y$, $X^{\RS{lr}}$ and $X^{\RS{rLlr}}$ are given by Proposition \ref{prop;order-1},  \ref{prop;order-2} and \ref{prop;order-3}. One can verify directly that $\m X$ is an enhanced data generated by $\ml T'=\{\RS{n},\RS{lr},\RS{rLlr}\}$ adapted to $(V,\ml T,r)$.  Since $2\alpha+b>0$, Theorem \ref{th;local result;gRBE;subcritical} yields that the existence of some $s>1/4$ and  ${T^*}<\bar T$  such that there exists a unique solution $(u,v)\in C_{T^*}W^\alpha\times C_{T^*}W^s$ to equation \eqref{eq;gRBE;subcritcal} driven by $(\m X,u_0)$. Moreover, $u(0)=u_0+\sum_{\tau\in\ml T'}c(\tau) X^{\tau}(0)=u_0+Y(0)$.
        \par Now let $\sigma_n$ be an arbitrary sequence converging to $0$ as $n\to\infty$.  Choose $u_0^{\sigma_n}$ and  $\m X^{\sigma_n}=\{Y_{\sigma_n},X_{\sigma_n}^{\RS{lr}},X_{\sigma_n}^{\RS{rLlr}}\}$ such that $\Vert u_0^{\sigma_n}-u_0\Vert_{W^s}\to 0$ and $\Vert \m X^{\sigma_n}-\m X\Vert_{X_T^{\alpha,b}}\to0$ as $\epsilon\to0$, then by Theorem \ref{th;local result;gRBE;subcritical}(3) and Lemma \ref{le;convergence;probability}, we obtain a sequence of existence times $\lim_{n\to\infty}T_{\sigma_n}\to T^*$ such that 
        \begin{align*}
            \lim_{n\to\infty} \Vert v^{\sigma_n}-v\Vert_{C_{T_{\sigma_n}\wedge T^*}W^s}=0
        \end{align*}
 in probability sense.   Since the sequence $\sigma_n$ is arbitrary, we conclude that
        \begin{align*}
            \lim_{\epsilon\to0}\Vert v^\epsilon-v\Vert_{C_{T_{\epsilon}\wedge T^*}W^s}=0
        \end{align*}
   in probability sense, and immediately deduce that 
\begin{align*}
            \lim_{\epsilon\to0}\Vert u^\epsilon-u\Vert_{C_{T_{\epsilon}\wedge T^*}W^{-1/4+\delta}}=0
        \end{align*}
 in probability sense, where $\alpha=-1/4+\delta$. Choosing $\ml R=\{(\RS{n},\RS{rLlr}),(\RS{lr},\RS{lr})\}$, the analysis in Section \ref{sec;high order}shows that if $(u^\epsilon,v^\epsilon)$ is the solution of \eqref{eq;gRBE;subcritcal} driven by $(\m X,u_0)$, then \(u^\epsilon\) coincides with the solution of \eqref{eq;Burgers;app} with initial condition
\[
u^\epsilon(0)
=
u_0^\epsilon+Y^\epsilon(0)
=
\phi_\epsilon*u(0).
\]
This completes the proof of Theorem \ref{th;main result}.
      \end{proof}
\appendix

\section{Gaussian computation}
In this section, we give some useful computations which take crucial rules in computation of high order expansion.
\begin{proposition}\label{prop;key computation}
	Fix a countable index set $\ml Z$, and let $\{f_\kappa\}_{\kappa\in\ml Z}$ be a family of continuous stochastic process belonging to $\ml I_k$ for some fixed $k\in \mathbb{N}$, we denote
	$$F_{\kappa\eta}(t,s)=\m Ef_{\kappa}(t)f_{\eta}(s),\quad \hat F_{\kappa\eta}(s,t)=\m E[(f_{\kappa}(t)-f_{\kappa}(s))(f_{\eta}(t)-f_{\eta}(s))].$$
	 Let $\{C_\epsilon\}_{\epsilon\in(0,1]}$ be a family of functions $C_\epsilon:\ml Z\to [0,1]$ such that
	 \begin{itemize}
		\item{$C_\epsilon(\kappa)>C_{\bar\epsilon}(\kappa)$ for $\epsilon<\bar\epsilon$;}
		\item{Set $\{\kappa:C_\epsilon(\kappa)\neq0\}$ is finite for every $\epsilon>0$,}
		\item{$\lim_{\epsilon\to 0}C_\epsilon(\kappa)=1$ for every $\kappa\in\ml Z$. }
	\end{itemize}
	Moreover, $g_\kappa(x)$, $\kappa\in \ml Z$ satisfy
	 \begin{itemize}
		\item{$\ml F(g_\kappa(x))(k)=C_g$ is a constant when $k=\psi(\kappa)$ and vanishes otherwise, where $\psi:\ml Z\mapsto \mathbb{Z}$ is a surjection and $k\in \m Z$.}
		\item{there exists an involution $\iota:\ml Z\to\ml Z$ such that $g_{\iota\kappa}=\bar g_\kappa$ and $\psi(\iota\kappa)=-k$}
	\end{itemize}
	\par Let $\ml Z_*=\ml Z\backslash\{\kappa\in\ml Z:\psi(\kappa)=0\}$Define $F_\epsilon(t,x)$ defined on $[0,T]\times\m T$ be the stochastic process defined by 
	\begin{equation}\label{eq;rep;F}
		F_\epsilon(t,x)=\sum_{\kappa\in\ml Z_*}C_\epsilon(\kappa)f_\kappa(t)g_\kappa(x).
	\end{equation}
Assume that there exists $a\in\m R$ and $b\in(0,1/2)$ such that
\begin{itemize}
\item {$F_{\kappa,\eta}(s,t)=\hat F_{\kappa,\eta}(s,t)=0,\quad if \: \psi(\kappa)\neq \psi(\eta)$.}
\item {The following estimates hold}
\end{itemize}
	\begin{align}
		S_1=&\sum_{\kappa,\eta\in\ml Z_*}\sup_{s,t\in[0,T]}\vert F_{\kappa\eta}(t,s)\vert\vert \psi(\kappa)\vert^a \vert \psi(\eta)\vert^a<\infty,\\
		S_2=&\sum_{\kappa,\eta\in\ml Z}|\psi(L)|^c|\psi(L')|^c\sup_{s,t\in[0,T]}\vert \frac{\hat F_{\kappa\eta}(s,t)}{\vert t-s\vert^{2b}}<\infty.
	\end{align}
	Then there exists a stochastic process $F$ such that $F_\epsilon\to F$  in $L^2(\Omega;C_TW^\alpha)\cap L^2(\Omega;C_T^b W^0)$ for $\alpha<a$, $\beta< b$. 
	\begin{proof}
		
        Let $0<\bar\epsilon<\epsilon$ and $F^{\bar \epsilon}_\epsilon(x,t):=F_\epsilon(x,t)-F_{\bar\epsilon}(x,t)$, We firstly establish the convergence in Sobolev space. By the definition of $\hat F_{\kappa\eta}$ and $F_{\kappa\eta}(t,s)$, we have
               \begin{align*}
          | \hat F_{\kappa\eta}(s,t)|=|\m E[(f_{\kappa}(t)-f_{\kappa}(s))(f_{\eta}(t)-f_{\eta}(s))]|\lesssim \sup_{r,\tau\in[0,T]}|F_{\kappa\eta}(r,\tau)|=F_{\kappa\eta}.
        \end{align*}
 Therefore by discrete H\"older inequality, for any $\alpha\in[c,a)$
        \begin{equation}\label{eq;S1S2}
                  \begin{aligned}
      \sum_{\kappa,\kappa'\in\ml Z}&|\psi(\kappa)|^{\alpha}|\psi(\kappa')|^{\alpha}|\hat F_{\kappa\kappa'}(t,s)|\\
      \lesssim& \vert t-s\vert^{2b(1-\theta)}\sum_{\kappa,\kappa'\in\ml Z}(|\psi(\kappa)|^{a}|\psi(\kappa')|^{a}F_{\kappa\kappa'})^\theta\vert (|\psi(\kappa)|^{c}|\psi(\kappa')|^{c}|\frac{|\hat F_{\kappa\eta}(s,t)|}{\vert t-s\vert^{2b}})^{1-\theta}  \lesssim|t-s|^{2b(1-\theta)}S_1^\theta S_2^{1-\theta}
        \end{aligned}
        \end{equation}
      where $\alpha=a\theta+c(1-\theta)$ for $\theta\in[0,1)$. Denote $C_\epsilon^{\bar\epsilon}(\kappa)=C_\epsilon(\kappa)-C_{\bar \epsilon}(\kappa)$, noting that $\sum_{k\in \m Z\backslash\{0\}}\textbf{1}_{\psi(\kappa)=-\psi(\kappa')=k}=\textbf{1}_{\kappa,\kappa'\in \ml Z_*}$ , we have
        \begin{align*}
      \sum_{k\in\m Z_*} | k|^{2\alpha}\vert \ml F(F_\epsilon^{\bar\epsilon}(t)-F_\epsilon^{\bar\epsilon}(s))(k)\vert^2=&\sum_{\kappa,\kappa'\in\ml Z_*}C_{\epsilon}^{\bar\epsilon}(\kappa)C_{\epsilon}^{\bar\epsilon}(\kappa')|\psi(\kappa)|^{\alpha}|\psi(\kappa')|^{\alpha}|\hat F_{\kappa\kappa'}(t,s)|\sum_{k\in\m Z_*}\textbf{1}_{\psi(\kappa)=-\psi(\kappa')=k}\\
       \lesssim& (\sup_{\kappa,\kappa'\in\ml Z_*}\vert C^{\bar\epsilon}_\epsilon(\kappa)C^{\bar\epsilon}_\epsilon(\kappa'))S_1^{\theta}S_2^{1-\theta}|t-s|^{2b(1-\theta)}
        \end{align*}
Noting that the zero frequency has no contribution by the definition of $F$, Parseval's identity yields
		\begin{align*}
			\m E[\Vert F^{\bar{\epsilon}}_\epsilon(x,t)-F^{\bar{\epsilon}}_\epsilon(x,s)\Vert_{H^{\alpha}(\mathbb{T})}^{2p}]\sim& \m E(\sum_{k\in \m Z}|k|^{2\alpha}\vert \ml F(F_\epsilon^{\bar\epsilon}(t)-F_\epsilon^{\bar\epsilon}(s))(k)\vert^2)^p\\
			=&\sum_{k_1,k_2,\cdots,k_p\in \m Z_*}\m E\prod_{i=1}^{p}|k_i|^{2\alpha}|\ml F(F_\epsilon^{\bar\epsilon}(t)-F_\epsilon^{\bar\epsilon}(s))(k_i)\vert^2\\
			\lesssim &\sum_{k_1,k_2,\cdots,k_p\in \m Z_*}\prod_{i=1}^{p}|k_i|^{2\alpha}(\m E|\ml F(F_\epsilon^{\bar\epsilon}(t)-F_\epsilon^{\bar\epsilon}(s))(k_i)\vert^{2p})^\frac{1}{p}\\
			\lesssim&\sum_{k_1,k_2,\cdots,k_p\in \m Z_*}\prod_{i=1}^{p}|k_i|^{2\alpha}\m E|\ml F(F_\epsilon^{\bar\epsilon}(t)-F_\epsilon^{\bar\epsilon}(s))(k_i)\vert^{2}\\
			\lesssim&(\sup_{\kappa,\kappa'\in\ml Z}\vert C^{\bar\epsilon}_\epsilon(\kappa)C^{\bar\epsilon}_\epsilon(\kappa'))^pS_1^{\theta p}S_2^{(1-\theta)p}\vert\vert t-s\vert^{2b(1-\theta) p}.
		\end{align*}
		Choosing $p$ sufficiently large, it then follows from a straightforward modification of Kolmogorov’s continuity criterion that
        \begin{align*}
			\lim_{\epsilon\to0}\sup_{0<\bar\epsilon<\epsilon}\m E\Vert F_\epsilon^{\bar\epsilon}\Vert^2_{C_T^{b(1-\theta)}H^{a\theta+c(1-\theta)}}\to0. 
		\end{align*}
        by choosing $\theta$ arbitrarily close to $1$, we obtain spatial regularity arbitrarily close to $a$, while choosing $\theta$ close to $0$ gives temporal regularity arbitrarily close to $b$. Therefore, for every $\alpha<a$ and $\beta<b$, there exists a limit process $F$ such that $F_\epsilon\to F$ in $L^2(\Omega;C_TH^\alpha\cap C_T^\beta L^2 )$.
	    \par To obtain the spatial H\"older regularity, we further prove the convergence in Besov spaces $B_{2p,2p}^\alpha$. By definition of $B_{2p,2p}^\alpha$ and Gaussian hypercontractivity, we obtain
        \begin{align*}
            \m E[\Vert F^{\bar{\epsilon}}_\epsilon(x,t)-F^{\bar{\epsilon}}_\epsilon(x,s)\Vert_{B_{2p,2p}^\alpha(\mathbb{T})}^{2p}]=&\sum_{l\geq -1}2^{2pl\alpha}\m E\Vert \Delta_l( F_\epsilon^{\bar\epsilon}(x,t)- F_\epsilon^{\bar\epsilon}(x,s)\Vert_{L^{2p}}^{2p}]\\
            \lesssim&\sum_{l\geq 1}2^{2pl\alpha}\Vert \m E|\Delta_l( F_\epsilon^{\bar\epsilon}(x,t)- F_\epsilon^{\bar\epsilon}(x,s)|^2\Vert_{L^p}^p
        \end{align*}
For $l\geq0$, we have
        \begin{align*}
            \m E|\Delta_l( F_\epsilon^{\bar\epsilon}(x,t)- F_\epsilon^{\bar\epsilon}(x,s)|^2=&(2\pi)^{-2}\sum_{k,k'\in \m Z}\rho_l(k)\rho_l(k')e^{i(k-k')x}\m E[\ml F(F_\epsilon^{\bar\epsilon}(t)-F_\epsilon^{\bar\epsilon}(s))(k)\ml F(F_\epsilon^{\bar\epsilon}(t)-F_\epsilon^{\bar\epsilon}(s))(-k')]\\
            \lesssim&\sum_{k,k'\in \m Z}\rho_l(k)\rho_{l}(k')\sum_{\kappa,\kappa'\in\ml Z}C_\epsilon^{\bar\epsilon}(\kappa)C_\epsilon^{\bar\epsilon}(\kappa')|\hat F_{\kappa\kappa'}(t,s)|\textbf{1}_{k=\psi(\kappa)}\textbf{1}_{k'=\psi(\kappa')}\\
            \lesssim& \sup_{\kappa,\kappa\in\ml Z}(C_\epsilon^{\bar\epsilon}(\kappa)C_\epsilon^{\bar\epsilon}(\kappa'))\sum_{\kappa,\kappa\in\ml Z}\textbf{1}_{\psi(\kappa)\sim\psi(\kappa')\sim2^l}|\hat F_{\kappa\kappa'}(t,s)|
        \end{align*}
Noting that $\sum_{l\geq -1}\textbf{1}_{\psi(\kappa)\sim\psi(\kappa')\sim2^l}=\textbf{1}$,  applying the interpolation estimate \eqref{eq;S1S2}, we obtain
      \begin{align*}
             \m E[\Vert F^{\bar{\epsilon}}_\epsilon(x,t)-F^{\bar{\epsilon}}_\epsilon(x,s)\Vert_{B_{2p,2p}^\alpha(\mathbb{T})}^{2p}]
            \lesssim& (\sup_{\kappa,\kappa\in\ml Z}(C_\epsilon^{\bar\epsilon}(\kappa)C_\epsilon^{\bar\epsilon}(\kappa')))^p\sum_{\kappa,\kappa'\in\ml Z}|\psi(\kappa)|^\alpha|\psi(\kappa')|^\alpha|\hat F_{\kappa\kappa'}(t,s)|)^p\\
            \lesssim&(\sum_{\kappa,\kappa\in\ml Z}C_\epsilon^{\bar\epsilon}(\kappa)C_\epsilon^{\bar\epsilon}(\kappa')|\psi(\kappa)|^\alpha|\psi(\kappa')|^\alpha\sup_{t\in[0,T]}|\hat F_{\kappa,\kappa'}(t)|)^p\\
            \lesssim&(\sup_{\kappa,\kappa'\in\ml Z}\vert C^{\bar\epsilon}_\epsilon(\kappa)C^{\bar\epsilon}_\epsilon(\kappa'))^pS_1^{\theta p}S_2^{(1-\theta)p}\vert\vert t-s\vert^{2b(1-\theta) p}
        \end{align*}
	  Choosing $p$ sufficiently large, it then follows from a straightforward modification of Kolmogorov’s continuity criterion that
    \begin{align*}
\lim_{\bar\epsilon\to0}\sup_{\epsilon<\bar\epsilon}\m E\Vert F_\epsilon^{\bar\epsilon}\Vert^2_{C_T^{b(1-\theta)}B_{2p,2p}^{a\theta+c(1-\theta)}}\to0.
    \end{align*}
   By choosing $\theta$ arbitrarily close to $1$, we obtain spatial regularity arbitrarily close to $a$, while choosing $\theta$ close to $0$ gives temporal regularity arbitrarily close to $b$.  Therefore, for every $\alpha<a$ and $\beta<b$, there exists a limit process $F$ such that $F_\epsilon\to F$ in $L^2(\Omega;C_TB_{2p,2p}^\alpha\cap C_T^\beta B_{2p,2p}^0 )$. Choosing $p$ sufficiently large again and by the Besov embedding, we obtain the convergence holds in $L^2(\Omega;C_T\ml C^\alpha\cap C_T^\beta \ml C^0 )$.
    \end{proof}
	
\end{proposition}
\begin{remark}
	In fact, the convergence also holds in $L^2(\Omega;C_T\ml C^\alpha\cap C_T^\beta L^\infty)$ by interpolation since $B_{\infty,\infty}^{0+}\hookrightarrow L^\infty$ which is the conclusion in \cite{hairer2013solving}.
    \end{remark}
\section{Other useful estimate}\label{sec;useful est}
In this section, we introduce some useful estimates established in \cite{hairer2013solving} and provide several additional estimates that will be used throughout the paper.
\begin{proposition}\label{prop;useful;bound 2}[\cite{hairer2013solving}]
	 \begin{itemize}
	 	\item{(1)	The bound 
	 		\begin{align*}
	 			\int_{-\infty}^{\tau}\int_{-\infty}^{\tau'}e^{-a\vert \tau-r\vert-a\vert \tau'-r'\vert-b\vert r-r'\vert}\mathrm{d}r'\mathrm{d}r=&\frac{ae^{-b\vert \tau-\tau'\vert}-be^{-a\vert \tau-\tau'\vert}}{a(a^2-b^2)}\\
	 			\leq&\frac{1}{a(a+b)}\wedge \frac{e^{-(a\wedge b)\vert t-t'\vert}}{a\vert a-b\vert}.
	 		\end{align*}
	 		hold for $a,b>0$, $\tau,\tau'\in \mathbb{R}$.}
		\item{(2)The bound 
			\begin{align*}
				\int_{-\infty}^{t}\int_{-\infty}^{t'}&\exp(-a\vert s-s'\vert-b\vert t-s\vert-c\vert t'-s'\vert-d\vert t' -s\vert-e\vert t-s'\vert)\mathrm{d}r'\mathrm{d}r\\
				\leq&\frac{10e^{-(d\wedge e)\vert t-t'\vert}}{(b+d)(c+e)+a((b+d)\wedge(c+e))}
			\end{align*}
			hold for every $s,s'\in \mathbb{R}$ and every $a,b,c,d,e>0$.}
		\item{(3) For every $s<t$ and $u,v>0$, one has 
		\begin{align*}
			\int_s^te^{-u\vert x-s\vert-v\vert x-t\vert}\mathrm{d}x\leq \frac{4}{u+v}
			\end{align*}}
		\item{(4) For $a,b,t>0$, we bound that
		\begin{align*}
			\bigg\vert \frac{bf(at)-af(bt)}{b-a}-1\bigg\vert\leq 2abt^2 \Vert f''\Vert_{\infty}
			\end{align*}   
		Moreover, if $K$ satisfied $\sup_{y\geq0}\vert f(y)-yf'(y)\vert\leq K$, then 
		\begin{align*}
			\bigg\vert \frac{bf(at)-af(bt)}{b-a}\bigg\vert\leq K
		\end{align*}
		}
	\end{itemize}
\end{proposition}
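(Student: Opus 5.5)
The statement collects four elementary bounds on exponential integrals and one bound on a difference quotient. I would prove each one directly by calculus, in order, without appealing to any earlier result of the paper.

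\textbf{Part (1).} I substitute $x=\tau-r$, $y=\tau'-r'$ and assume without loss of generality $\tau\geq\tau'$; set $h=\tau-\tau'\ge 0$. The double integral becomes
\begin{equation*}
\int_0^\infty\int_0^\infty e^{-ax-ay-b|x-y-h|}\,\mathrm dy\,\mathrm dx .
\end{equation*}
I split the domain according to the sign of $x-y-h$ and integrate the exponentials explicitly in each region. This should give the closed form $\frac{ae^{-bh}-be^{-ah}}{a(a^2-b^2)}$, with the case $a=b$ recovered as a limit.
\begin{itemize}
\item For the bound $\frac{1}{a(a+b)}$: the closed form is a decreasing function of $h$ (its derivative is proportional to $-ab(e^{-bh}-e^{-ah})/(a^2-b^2)\le 0$). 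So it is at most its value at $h=0$, which is $\frac{a-b}{a(a^2-b^2)}=\frac{1}{a(a+b)}$.
\item For the second bound: write the numerator as $a e^{-bh}-be^{-ah}$. Dropping the nonpositive term, according to whether $a>b$ or $a<b$, and using $e^{-\max(a,b)h}\le e^{-(a\wedge b)h}$, gives $\frac{e^{-(a\wedge b)h}}{a|a-b|}$ up to an absolute constant.
\end{itemize}

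\textbf{Part (2).} Here the exponent mixes five distances, so an exact computation is impractical. I would first extract the decay factor using the triangle inequality: $d|t'-s|+e|t-s'|$ together with $b|t-s|$ and $c|t'-s'|$ dominates $(d\wedge e)|t-t'|$ plus a remainder. More precisely, I keep half of each of the $b,c,d,e$ terms for the integrability estimate and spend the other half to produce $e^{-(d\wedge e)|t-t'|/2}$. This halving is harmless up to the constant $10$, or it can be avoided by a case split on whether $s$ lies closer to $t$ or to $t'$.

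After extracting the factor, I bound $b|t-s|+d|t'-s|\ge (b+d)\,\mathrm{dist}$-type quantities and reduce to a one-dimensional model:
\begin{equation*}
\int\!\!\int e^{-A|x|-C|y|-a|x-y+\text{const}|}\,\mathrm dx\,\mathrm dy ,\qquad A=b+d,\quad C=c+e .
\end{equation*}
This is bounded by $\frac{1}{AC}\wedge\frac{1}{A(C\wedge\cdot)}\cdots$. The claimed denominator $AC+a(A\wedge C)$ comes from comparing two estimates:
\begin{itemize}
\item integrating $x$ and $y$ separately gives $\frac{1}{AC}$;
\item integrating first in the variable attached to the smaller of $A,C$ against the coupling $a$ gives $\frac{1}{(A\vee C)\,a}$, roughly.
\end{itemize}
Then $\min\bigl(\tfrac{1}{AC},\tfrac{1}{a(A\vee C)}\bigr)\lesssim\frac{1}{AC+a(A\wedge C)}$.

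\textbf{Main obstacle.} I expect part (2) to be the hardest step, specifically getting the precise denominator with $a((b+d)\wedge(c+e))$. I would first try the two-estimate comparison above and check whether the symmetric roles force $\wedge$ rather than $\vee$. If they do not, I would fall back on integrating in $s$ first with $s'$ fixed, using part (3) on the finite interval between the two relevant anchor points.

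\textbf{Part (3).} I split $[s,t]$ at the midpoint $m=\frac{s+t}{2}$. On $[s,m]$ one has $|x-t|\ge|x-s|$, so the integrand is at most $e^{-(u+v)|x-s|}$, whose integral is at most $\frac{1}{u+v}$. The interval $[m,t]$ is symmetric. Together this gives $\frac{2}{u+v}\le\frac{4}{u+v}$.

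\textbf{Part (4).} I write $g(a)=\frac{f(at)-1}{a}$ and note the identity
\begin{equation*}
\frac{bf(at)-af(bt)}{b-a}-1=ab\,\frac{g(a)-g(b)}{b-a},
\end{equation*}
assuming $f(0)=1$, as is implicit in the statement. By the mean value theorem this equals $-ab\,g'(\zeta)$. Then $|g'(\zeta)|\le t^2\|f''\|_\infty$ up to a factor $2$, by Taylor's theorem with remainder for $f(\zeta t)-1-\zeta t f'(\zeta t)$.

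For the second claim, the same quotient equals $\frac{\mathrm d}{\mathrm dx}\bigl[x f(t/x)\bigr]$ evaluated at some intermediate point, via the substitution $x=1/a$. That derivative is $f(y)-yf'(y)$ with $y=t/x$, which is bounded by $K$.
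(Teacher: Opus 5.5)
The paper gives no argument of its own here (it only cites Hairer), so your proposal is judged on its merits. Parts (1), (3) and (4) are correct. In (1) you do not need ``up to an absolute constant'': after dropping the nonpositive term, $a+b\ge a$ (when $a>b$), resp.\ $b\le a+b$ (when $a<b$), gives the stated bound with constant $1$. In (4) you rightly note that $f(0)=1$ is needed. The substitution $\phi(x)=xf(t/x)$, with $\phi'(x)=f(y)-yf'(y)$ and $y=t/x$, is a clean proof of the second claim.

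The gap is in (2), at the extraction of the decay factor. First, no pointwise triangle-inequality argument gives the rate $d\wedge e$. At $s=t'$, $s'=t$ your four-term sum $b\vert t-s\vert+c\vert t'-s'\vert+d\vert t'-s\vert+e\vert t-s'\vert$ equals $(b+c)\vert t-t'\vert$, which is far below $(d\wedge e)\vert t-t'\vert$ when $b,c$ are small. Only the domain of integration rules this point out. Second, even with a valid lower bound, spending half of the coefficients yields only $e^{-(d\wedge e)\vert t-t'\vert/2}$. Since $e^{-x/2}\le Ce^{-x}$ fails for large $x$, this weakens the rate, not the constant $10$, so your conclusion would be strictly weaker than the statement. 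Your fallback split on the position of $s$ does not repair this. For $t\ge t'$ and $s\in(t',t]$, the $s$-terms only give $(b\wedge d)\vert t-t'\vert$, so the decay must come from the $e$-term, i.e.\ from $s'$.

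The missing observation is that nothing needs to be sacrificed. If $t\ge t'$, then $s'\le t'\le t$ on the whole domain, so
\[
e\vert t-s'\vert=e(t-t')+e(t'-s')
\]
holds exactly. Thus $e^{-e(t-t')}\le e^{-(d\wedge e)\vert t-t'\vert}$ factors out, and $e$ merges into the coefficient of $t'-s'$. The case $t<t'$ is symmetric, using $s\le t\le t'$ and the $d$-term. Set $A=b+d$ and $C=c+e$.

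On $\{s\le t'\}$ you have $b(t-s)+d(t'-s)\ge A(t'-s)$. The resulting model integral equals $\frac{1}{A+C}\bigl(\frac{1}{a+A}+\frac{1}{a+C}\bigr)\le\frac{2}{AC+a(A\wedge C)}$.

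On $\{t'<s\le t\}$ you have $\vert s-s'\vert=(s-t')+(t'-s')$, so the $s'$-integral equals $e^{-a(s-t')}/(a+C)$. Part (3) bounds the $s$-integral by $4/(a+A)$. Since $(a+A)(a+C)\ge AC+a(A\wedge C)$, this piece is at most $4/(AC+a(A\wedge C))$.

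Together these give (2) with constant $6\le 10$. Your comparison of $\frac{1}{AC}$ with $\frac{1}{a(A\vee C)}$ is fine for the denominator, but it only becomes usable after this exact factorization.
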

\begin{corollary}\label{cor;ab-ba}
		For $a,b,t>0$, we bound that
	\begin{align*}
		\bigg\vert be^{-at}-ae^{-bt} -(b-a)\bigg\vert\leq (C\wedge 2abt^2 \Vert f''\Vert_{\infty})\vert b-a\vert
	\end{align*} 
	for some $C>1$
	\begin{proof}
		Without loss of generality, we assume that $a>b$, then we have bound 
		\begin{align*}
			 \vert be^{-at}-ae^{-bt}\vert\leq \vert b-a\vert e^{-at}+\vert b-a\vert e^{-at}\leq C\vert b-a\vert.
		\end{align*}
	Then the claim is followed by Proposition \ref{prop;useful;bound 2}.
	\end{proof}
\end{corollary}
\begin{lemma}\label{le; H-hat;1}
	Let $H_{L,L'}(T,T')=\ml I_{L,L'}\ml K(T,T')$, where the operator $\ml I_{L,L'}$ is define as \eqref{eq;operator;I}. $\ml K(t,t')$ satisfies $\ml K(t-s,t'-s)=\ml K(t,t')$ for any $s\geq0$ and the following estimate 
	\begin{align*}
		\vert \ml K(t,t')\vert\leq Fe^{-a\vert t-t'\vert}
	\end{align*}
	where $a,b,c>0$. Let $\hat H(T,T')=H(T,T)+H(T',T')-H(T,T')-H(T',T)$, if $\vert \psi (L)\vert=\vert \psi (L')\vert$, then $\hat H$ and $H$ has estimate
	\begin{align*}
		\vert H_{L,L'}(T,T')\vert	\leq& \frac{1}{\vert \psi (L)\vert^{\gamma}(\vert \psi (L)\vert^{\gamma}+a)}\wedge\frac{e^{-(\vert \psi (L)\vert^\gamma\wedge a)\vert T-T'\vert}}{\vert \psi (L)\vert^{\gamma}\vert \vert \psi (L)\vert^\gamma-a\vert}\cdot F\\
		\vert \hat H_{L,L'}(T,T')\vert\leq& F\frac{1\wedge \vert \psi(L)\vert^\gamma\vert T-T'\vert}{\vert \psi (L)\vert^{\gamma}(\vert \psi (L)\vert^\gamma+a)}
	\end{align*}
	\begin{proof}
		Without loss of generality, we assume that $T\geq T'$. By the definition of $\ml I_{L,L'}$, we can write $H_{L,L'}(T,T')$ as follows
		\begin{align*}
			H_{L,L'}(T,T')=&\int_{-\infty}^{T}\int_{-\infty}^{T'}e^{(-\vert \psi (L)\vert^\gamma(T+T'-t-t'))}\ml K(t,t')\mathrm{d}t'\mathrm{d}t\\
			=&\int_{-\infty}^{T-T'}\int_{-\infty}^{0}e^{-\vert \psi (L)\vert^\gamma(T-T'-t-t')}\ml K(t-T',t'-T')\mathrm{d}t'\mathrm{d}t\\
			=&\int_{-\infty}^{\delta}\int_{-\infty}^{0}e^{-\vert \psi (L)\vert^\gamma(\delta-t-t')}\ml K(t,t')\mathrm{d}t'\mathrm{d}t\\
			=:&H(\delta),\quad \delta=T-T'
		\end{align*}
		Applying Proposition \ref{prop;useful;bound 2}, we obtain
		\begin{align*}
		\vert H(\delta)\vert	\leq \frac{1}{\vert \psi (L)\vert^\gamma(\vert \psi (L)\vert^\gamma+a)}\wedge\frac{e^{-(\vert \psi (L)\vert^\gamma\wedge a)\delta}}{\vert \psi (L)\vert^\gamma\vert  \vert \psi (L)\vert^\gamma-a\vert}\cdot F.
		\end{align*}
		Since the expression is symmetric with respect to $T$ and $T'$, the estimate for $|H_{L,L'}(T,T')|$ holds.
		\par For $\hat H_{L,L'}(T,T')$, by a direct calculation, we have
				\begin{align*}
			\vert \hat H_{L,L'}(T,T')\vert=&\vert 2 H_{L,L'}(T,T')-H_{L,L'}(T,T)-\hat H_{L,L'}(T',T')\vert=2\vert H_{L,L'}(\delta)-H_{L,L'}(0)\vert\\
			\lesssim&2\vert  \int_{-\infty}^{\delta}\int_{-\infty}^{0} e^{-\vert \psi (L)\vert^\gamma(\delta-t-t')}\ml K(t,t')\mathrm{d}t'\mathrm{d}t-\int_{-\infty}^{0}\int_{-\infty}^{0} e^{-\vert \psi (L)\vert^\gamma(-t-t')}\ml K(t,t')\mathrm{d}t'\mathrm{d}t\vert\\
			\lesssim&2\vert \int_{-\infty}^{0}\int_{-\infty}^{0} e^{-\vert \psi (L)\vert^\gamma(\delta-t-t')}\ml K(t,t')-e^{-\vert \psi (L)\vert^\gamma(-t-t')}\ml K(t,t')\mathrm{d}t'\mathrm{d}t\vert\\
			&+2\vert \int_{0}^{\delta}\int_{-\infty}^{0} e^{-\vert \psi (L)\vert^\gamma(\delta-t-t')}\ml K(t,t')\mathrm{d}t'\mathrm{d}t\vert
			=\hat H_1+\hat H_2
		\end{align*}
		For the first part $\hat H_1$, using $|1-e^{-x}|\lesssim 1\wedge x$, by Proposition \ref{prop;useful;bound 2}, we obtain
		\begin{align*}
			\hat H_1\leq 2 (e^{-\vert \psi(L)\vert^\gamma\vert \delta}-1)H_{L,L'}(0)\leq F\frac{1\wedge \vert \psi(L)\vert^\gamma\vert \delta\vert}{\vert \psi (L)\vert^\gamma(\vert \psi (L)\vert^\gamma+a)}
		\end{align*}
		For the second part, since $t'\leq 0$, we have
		\begin{align*}
			\int_{-\infty}^{0}e^{-\vert \psi (L)\vert^\gamma(\delta-t-t')}\ml K(t,t')\mathrm{d}t'\leq F\cdot\frac{e^{-\vert \psi(L)\vert^\gamma (\delta-t)-at}}{\vert \psi(L)\vert^\gamma+a}.
		\end{align*}
Hence,
		\begin{align*}
			\hat H_2\leq F\cdot(\frac{\delta}{\vert \psi(L)\vert^\gamma+a}\wedge\frac{1}{(\vert \psi(L)\vert^\gamma+a)^2}).
		\end{align*}
Combining the above estimates yields
\begin{align*}
    \vert \hat H_{L,L'}(T,T')\vert\leq& F\frac{1\wedge \vert \psi(L)\vert^\gamma\vert T-T'\vert}{\vert \psi (L)\vert^{\gamma}(\vert \psi (L)\vert^\gamma+a)}
\end{align*}
	\end{proof}
\end{lemma}
\begin{lemma}\label{le; H-hat;2}
	Let $H(t,t')$ be following integral
			\begin{align*}
			\int_{-\infty}^{t}\int_{-\infty}^{t'}&\exp(-a\vert s-s'\vert-b\vert s-t\vert-c\vert t'-s'\vert-d\vert t'-s\vert-e\vert t-s'\vert)\mathrm{d}s'\mathrm{d}s
		\end{align*}
where $a,b,c,d,e>0$, $a+e\neq b$. Let $\hat H(t,t')=H(t,t)+H(t',t')-H(t,t')-H(t',t)$, then $\hat H$ has estimate
		\begin{align*}
		\vert \hat H(t,t')\vert\lesssim F(e^{(-(b+d)\vert t-t'\vert)}-1)+\frac{e^{-(b+d)(t-t')}}{a+c+d}(\frac{1-e^{-(a+e-b)\vert t-t' \vert}}{a+e-b}).
		\end{align*}
		\begin{proof}
		Without loss of generality, we assume that $t\geq t'$. Rewrite $H(t,t')$ as follows
		\begin{align*}
			H(t,t')=\int_{-\infty}^{t-t'}\int_{-\infty}^{0}&\exp(-a\vert s-s'\vert-b( t-t'-s)-c(-s')-d\vert-s\vert -e(t-t'-s'))\mathrm{d}s'\mathrm{d}s
		\end{align*}
	Let $K(t-t',s,s')$ is the inner part of the integral, following a straightforward calculation we split $|\hat H(t,t')|$ into two parts
		\begin{align*}
			\vert \hat H(t,t')\vert=&\vert 2H(t,t')-H(t,t)-H(s,s)\vert\\
			\lesssim&2\vert \int_{-\infty}^{t-t'}\int_{-\infty}^{0} K(t-t';s',s)\mathrm{d}s'\mathrm{d}s-\int_{-\infty}^{0}\int_{-\infty}^{0} K(0;s',s)\mathrm{d}s'\mathrm{d}s\vert\\
			\lesssim&2\vert \int_{-\infty}^{0}\int_{-\infty}^{0} K(t-t';s',s)-K(0;s',s)\mathrm{d}s'\mathrm{d}s\vert\\
			&+2\vert \int_{0}^{t-t'}\int_{-\infty}^{0} K(t-t';s',s)\mathrm{d}s'\mathrm{d}s\vert
		\end{align*}
For the second part, noting that $s>s'$ holds throughout the domain of integration. We calculate that
		\begin{align*}
		\int_{-\infty}^{0} K(t-t';s',s)\mathrm{d}s'\mathrm{d}s=\frac{e^{-(b+e)(t-t')}}{a+c+e}e^{-(a+d)s+B_f(t-t'-s)}\lesssim \frac{1\wedge e^{-(a+d)s+B_f(t-t'-s)}}{a+c+e}
		\end{align*}		
Then by Proposition \ref{prop;useful;bound 2}, we have the estimate that 
\begin{align*}
	\int_0^{t-t'}\int_{-\infty}^{0} K(t-t';s',s)\mathrm{d}s'\mathrm{d}s\lesssim \frac{1\wedge (a+d+b)\vert t-t'\vert}{(a+c+e)(a+d+b)}
\end{align*}
Otherwise, for the first part, we use the fact $K(t-t';s',s):=\exp{(-(b+e)(t-t'))}K(0;s',s)$. It then follows from the Proposition \ref{prop;useful;bound 2} that
	\begin{align*}
		\vert \hat H(t,t')\vert\lesssim F(e^{(-(b+e)(t-t'))}-1)+\frac{e^{-(b+e)(t-t')}}{a+c+e}(\frac{1-e^{-(a+d-b)(t-t')}}{a+d+b}).
	\end{align*}
	Since $t$ and $t'$ is symmetric, the proof is finished.
		\end{proof}
\end{lemma}
\begin{lemma}\label{le;package}
Fix

$$
\ml H_{k,m}(t,s)
=(m+k)e^{-(|k-m|^\gamma+|k|^\gamma)(t-s)}
+(m-k)e^{-(|k+m|^\gamma+|k|^\gamma)(t-s)}.
$$

Then there exists a constant \(c>0\) such that, for any \(0\leq\theta\leq\gamma-1\),

$$
|\ml H_{k,m}(t,s)|
\lesssim
e^{-c(|k+m|^\gamma+|m|^\gamma)|t-s|}
|m|^\theta (|k+m|+|k|)^{1-\theta},
$$

for any \(k,m\in\mathbb Z\) with \(k\neq m\).

\begin{proof}[Proof of Lemma \ref{le;package}]
By a direct computation, we have

$$
\begin{aligned}
\ml H_{k,m}(t,s)
={}&m\Big(e^{-(|k-m|^\gamma+|k|^\gamma)(t-s)}
+e^{-(|k+m|^\gamma+|k|^\gamma)(t-s)}\Big)\\
&+k\Big(e^{-(|k-m|^\gamma+|k|^\gamma)(t-s)}
-e^{-(|k+m|^\gamma+|k|^\gamma)(t-s)}\Big),
\end{aligned}
$$

where we have used the identity
\(ac+bd=\frac12((a+b)(c+d)-(a-b)(c-d))\). Noting that
\(|e^{-x}-e^{-y}|\lesssim(1\wedge|x-y|)e^{-x\wedge y}\), we obtain

$$
|\ml H_{k,m}(t,s)|
\lesssim
e^{-(||k|-|m||^\gamma+|k|^\gamma)(t-s)}
\Big(
|m|+|k|\big(1\wedge
(|k|^{\gamma-1}|m|+|k||m|^{\gamma-1})(t-s)\big)
\Big),
$$

where we have used

$$
\big||k-m|^\gamma-|k+m|^\gamma\big|
\lesssim |k|^{\gamma-1}|m|+|k||m|^{\gamma-1}.
$$

Moreover, for some \(c>0\),

$$
||k|-|m||^\gamma+|k|^\gamma
\geq c(|k+m|^\gamma+|k|^\gamma).
$$

Hence, using \(xe^{-x}\lesssim1\), we further obtain

$$
|\ml H_{k,m}(t,s)|
\lesssim
e^{-c(|k+m|^\gamma+|k|^\gamma)(t-s)}
\left(
|m|+
\frac{|k|^\gamma|m|+|k||m|^\gamma}
{|m|^\gamma+|k|^\gamma}
\right).
$$

For \(\gamma\geq3/2\), we have

$$
\frac{|k|^\gamma|m|+|k||m|^\gamma}
{|m|^\gamma+|k|^\gamma}
\lesssim
\min\big(
|k|+|k|^{\gamma-1}|m|^{2-\gamma},
|m|+|m|^{\gamma-1}|k|^{2-\gamma}
\big)
$$

and consequently

$$
\frac{|k|^\gamma|m|+|k||m|^\gamma}
{|m|^\gamma+|k|^\gamma}
\lesssim
(|k|^{2-\gamma}+|m|^{2-\gamma})
(|k|\wedge|m|)^{\gamma-1}.
$$

It follows that

$$
|\ml H_{k,m}(t,s)|
\lesssim
e^{-c(|k+m|^\gamma+|m|^\gamma)(t-s)}
\Big(
|m|+
(|k|^{2-\gamma}+|m|^{2-\gamma})
(|k|\wedge|m|)^{\gamma-1}
\Big).
$$

Finally, for every \(0\leq\theta\leq\gamma-1\), the expression in the parentheses is bounded by

$$
|m|^\theta(|k+m|+|k|)^{1-\theta}.
$$

Therefore,

$$
|\ml H_{k,m}(t,s)|
\lesssim
e^{-c(|k+m|^\gamma+|m|^\gamma)(t-s)}
|m|^\theta(|k+m|+|k|)^{1-\theta},
$$

which proves the lemma.
\end{proof}
\end{lemma}

We now use the techniques introduced above to derive key estimates for the Gaussian trees $X^{\RS{rLlr}}$
\begin{proposition}\label{prop;resonant convergence}
	Let $L=(k,l,m),L'=(k',l',m')\in\m Z_*^3$, $T,T'\geq0$, and $X\otimes Y(t,t')=X(t)Y(t')$. Denote 
    \begin{align*}
    R_{L,L'}(T,T'):=\m E\bigg(Y_k\otimes Y_k'\cdot f(\psi(L_\downarrow))f(\psi(L'_\downarrow))\ml I_{L_{\downarrow},L'_{\downarrow}} ((Y_lY_m)\otimes (Y_{l'}Y_{m'}))(t,t')\bigg)  
    \end{align*}
    where $L_\downarrow=(l,m)$, $L'_\downarrow=(l',m')$,$\psi((a,b)))=a+b$, $\ml I_{L_{\downarrow},L'_{\downarrow}}$ is defined in , then it can be written as 
	\begin{align*}
		R_{L,L'}(T,T')=\sum_{i=1}^3R_{P_i;L,L'}(T,T').
	\end{align*}
	where $R_{LL'}{(P_1;T,T')}$, $R_{LL'}{(P_2;T,T')}$ and $R_{LL'}{(P_3;T,T')}$ satisfy the following estimates
	\begin{align*}
		R_{LL'}(P_1;t,t')\lesssim\frac{\vert (l+m)\vert^2\vert k\vert^{2\sigma-\gamma}\vert l\vert^{2\sigma-\gamma}\vert m\vert^{2\sigma-\gamma}e^{-\vert k\vert^\gamma\vert t-t'\vert}}{\vert l+m\vert^\gamma(\vert l+m\vert^\gamma+(\vert l\vert^\gamma+\vert m\vert^\gamma ))}
	\end{align*}
	and 
	\begin{align*}
		R_{LL'}(P_2;t,t')\lesssim \frac{\vert l+m\vert\vert k+m\vert\vert k\vert^{2\sigma-\gamma}\vert l\vert^{2\sigma-\gamma}\vert m\vert^{2\sigma-\gamma}e^{-\vert k\vert^\gamma\wedge\vert l\vert^\gamma\vert t-t'\vert}}{(\vert l+m\vert^\gamma+\vert l\vert^\gamma)(\vert k+m\vert^\gamma+\vert k\vert^\gamma)}
	\end{align*}
	and for $a'<5\gamma-6\sigma-4$, 
	\begin{align*}
		\sum_{k,k'}\ml R_{L,L'}(P_3;t,t')\lesssim \frac{1}{\vert m\vert^{a'}}.
	\end{align*}
	\begin{proof}
		\par Rewrite $R_{L,L'}(T,T')$ as 
		\begin{align*}
		\int_{-\infty}^{t}\int_{-\infty}^{t'}e^{-\vert l+m\vert^\gamma(t-s)}e^{-\vert l'+m'\vert^\gamma(t'-s')}f(l+m)f(l'+m')\m E[Y_k(t)Y_l(s)Y_m(s)Y_{k'}(t')Y_{l'}(s')Y_{m'}(s')],
		\end{align*}
We first expand the covariance by Wick's formula. Since $f(0)=0$, there are three possible non-vanishing pairings
\[
\begin{aligned}
P_1:&\ (k,k'),(l,l'),(m,m'),\\
P_2:&\ (k,l'),(l,k'),(m,m'),\\
P_3:&\ (k,l),(l',k'),(m,m').
\end{aligned}
\]
Here $(a,b)$ denotes the contraction between the corresponding Gaussian variables $Y_a$ and $Y_b$.	By the covariance for the stationary Gaussian field $Y$, the contributions associated with the above pairings are given by
		\begin{align*}
			\vert k\vert^{\gamma-2\sigma}\vert l\vert^{\gamma-2\sigma}\vert m\vert^{\gamma-2\sigma}\ml I_1=&e^{-[\vert k\vert^\gamma\vert t-t'\vert+(\vert l\vert^\gamma+\vert m\vert^\gamma )\vert s-s'\vert]}\textbf{1}_{k+k'=0}\textbf{1}_{l+l'=0}\textbf{1}_{m+m'=0}\\
			\vert k\vert^{\gamma-2\sigma}\vert l\vert^{\gamma-2\sigma}\vert m\vert^{\gamma-2\sigma}\ml I_2=&e^{-[\vert k\vert^\gamma(t-s')+\vert l\vert^\gamma(t'-s)+\vert m\vert^\gamma\vert s-s'\vert]}\textbf{1}_{k+l'=0}\textbf{1}_{k'+l=0}\textbf{1}_{m+m'=0}\\
			\vert k\vert^{\gamma-2\sigma}\vert k'\vert^{\gamma-2\sigma}\vert m\vert^{\gamma-2\sigma}\ml I_3=&e^{-[\vert k\vert^{\gamma}(t-s)+\vert k'\vert^\gamma(t'-s')+\vert m\vert^\gamma\vert s-s'\vert]}\textbf{1}_{k+l=0}\textbf{1}_{k'+l'=0}\textbf{1}_{m+m'=0}
		\end{align*}
		therefore
        \[
        f(l+m)f(l'+m')\m E[Y_k(t)Y_l(s)Y_m(s)Y_{k'}(t')Y_{l'}(s')Y_{m'}(s')]=f(l+m)f(l'+m')\sum_{i=1}^3\ml I_i
        \]
     For $i=1,2,3$, we denote by $R_{L,L'}(P_i;t,t')$ the contribution to $R_{L,L'}(t,t')$ corresponding to the pairing $P_i$. Hence,
\[
R_{L,L'}(t,t')
=
\sum_{i=1}^{3}R_{L,L'}(P_i;t,t').
\]
	Now we remain to show the estimate for $R_{L,L'}(P_i;t,t')$, $i=1,2,3$. For the contribution corresponding to pairing $P_1$, by the assumption $|f(l+m)|\lesssim |l+m|$ and Lemma \ref{le; H-hat;1}, we obtain
		\begin{align*}
			R_{LL'}(P_1;t,t')\lesssim  & \frac{e^{-(\vert k\vert^\gamma+|l+m|^\gamma)\vert t-t'\vert}}{\vert l+m\vert^\gamma(\vert l+m\vert^\gamma+(\vert l\vert^\gamma+\vert m\vert^\gamma ))}\vert (l+m)\vert^2\vert k\vert^{2\sigma-\gamma}\vert l\vert^{2\sigma-\gamma}\vert m\vert^{2\sigma-\gamma}
		\end{align*}
	For the contribution corresponding to pairing $P_2$, Proposition \ref{prop;useful;bound 2} yields
		\begin{align*}
			\ml R_{L,L'}(P_2;t,t')\lesssim&\vert (l+m)\vert\vert l'+m'\vert\vert k\vert^{2\sigma-\gamma}\vert l\vert^{2\sigma-\gamma}\vert m\vert^{2\sigma-\gamma}\int_{-\infty}^{t}\int_{-\infty}^{t'}e^{-\vert l+m\vert^\gamma(t-s)}e^{-\vert k+m\vert^\gamma(t'-s')}e^{-\ml I_2}\mathrm{d}s\mathrm{d}s'\\
			=&\int_{-\infty}^{t}\int_{-\infty}^{t'}e^{-\vert l+m\vert^\gamma(t-s)}e^{-\vert k+m\vert^\gamma(t'-s')-\vert k\vert^\gamma(t-s')-\vert l\vert^\gamma(t'-s)-\vert m\vert^\gamma\vert s-s'\vert}\mathrm{d}s\mathrm{d}s'\\
			\lesssim&\frac{\vert (l+m)\vert\vert l'+m'\vert\vert k\vert^{2\sigma-\gamma}\vert l\vert^{2\sigma-\gamma}\vert m\vert^{2\sigma-\gamma}e^{-\vert k\vert^\gamma\wedge\vert l\vert^\gamma\vert t-t'\vert}}{(\vert l+m\vert^\gamma+\vert l\vert^\gamma)(\vert k+m\vert^\gamma+\vert k\vert^\gamma)+\vert m\vert^\gamma(\vert l+m\vert^\gamma+\vert l\vert^\gamma)\wedge(\vert k+m\vert^\gamma+\vert k\vert^\gamma)}\\
			\lesssim&\frac{\vert (l+m)\vert\vert l'+m'\vert\vert k\vert^{2\sigma-\gamma}\vert l\vert^{2\sigma-\gamma}\vert m\vert^{2\sigma-\gamma}e^{-\vert k\vert^\gamma\wedge\vert l\vert^\gamma\vert t-t'\vert}}{(\vert l+m\vert^\gamma+\vert l\vert^\gamma+|m|^\gamma)(\vert k+m\vert^\gamma+\vert k\vert^\gamma)\wedge(\vert l+m\vert^\gamma+\vert l\vert^\gamma)(\vert k+m\vert^\gamma+\vert k\vert^\gamma+|m|^\gamma)}.
		\end{align*}
		For the contribution corresponding to the pairing $P_3$, the directly estimate of the integral does not provide a suitable bound. We therefore exploit the symmetry between $L$ and $L'$ in $\mathbb{Z}^3$. Indeed, let $\ml H_{k,m}(t,s)=(m+k)e^{-(\vert k-m\vert^\gamma+\vert k\vert^\gamma)(t-s)}+(m-k)e^{-(\vert k+m\vert^\gamma+\vert k\vert^\gamma)(t-s)}$, then $\sum_{k,k'\in \m Z_*}	\ml R_{L,L'}(P_3;t,t')$ can be written as
        \begin{align*}
            \sum_{k,k'\in\m Z_*}\ml R_{L,L'}(P_3;t,t')
					=\frac14&\sum_{k,k'\in\m Z_*}\vert kk'm\vert^{2\sigma-\gamma}\int_{-\infty}^{t}\int_{-\infty}^{t'}\ml H_{k,m}(t,s)\ml H_{k',m}(t',s')e^{-\vert m\vert^\gamma\vert s-s'\vert}\mathrm{d}s\mathrm{d}s'
        \end{align*}
It follows from Lemma \ref{le;package} that
\begin{align*}
					\sum_{k,k'\in\m Z_*}\ml R_{L,L'}(P_3;t,t')
					=\frac14&\sum_{k,k'\in\m Z_*}\vert kk'm\vert^{2\sigma-\gamma}\int_{-\infty}^{t}\int_{-\infty}^{t'}\ml H_{k,m}(t,s)\ml H_{k',m}(t',s')e^{-\vert m\vert^\gamma\vert s-s'\vert}\mathrm{d}s\mathrm{d}s'\\
					\lesssim&\sum_{k,k'\in\m Z_*}\vert kk'm\vert^{2\sigma-\gamma}|m|^{2\kappa}(|k+m|+|k|)^{1-\theta}(|k'+m|+|k'|)^{1-\theta}\\
					&\cdot\int_{-\infty}^{t}\int_{-\infty}^{t'}e^{-c(\vert k+m\vert+\vert k\vert)^\gamma(t-s)}e^{-c(\vert k'+m\vert+\vert k\vert)^\gamma(t'-s')}e^{-\vert m\vert^\gamma\vert s-s'\vert}\mathrm{d}s\mathrm{d}s'
				\end{align*}
				The remaining time integral can be estimated by H\"older's inequality. For $\mk k_1,\mk k_2,\mk k_3\in[0,1]$ and  $\mk k_1+\mk k_2+\mk k_3=2$.  we have
			\begin{align*}
					\lesssim& \int_{\mathbb{R}}\int_{\mathbb{R}}e^{-c(\vert k+m\vert+\vert k\vert)^\gamma(t-s)}e^{-c(\vert k'+m\vert+\vert k\vert)^\gamma(t'-s')}e^{-\vert m\vert^\gamma\vert s-s'\vert}\mathrm{d}s\mathrm{d}s'\\
					\lesssim& \frac{1}{(\vert k+m\vert+\vert k\vert)^{\mk k_1\gamma}(\vert k'+m\vert+\vert k'\vert)^{\mk k_2\gamma}\vert m\vert^{\mk k_3\gamma}}
				\end{align*}
			 Add the weight, One can choose $0\leq\mk k_1,\mk k_2,\mk k_3\leq1$ such that  for $a'<5\gamma-6\sigma-4$ and $\sigma<\frac
                32\gamma-\frac
                32$ we have the estimate 
				\begin{align*}
					\sum_{k,k'\in \mathbb{Z}_*}\ml R_{L,L'}(P_3;t,t')\lesssim&\sum_{k,k'\in \mathbb{Z}_*}\frac{\vert kk'm\vert^{2\sigma-\gamma}|m|^{2\kappa}(|k+m|+|k|)^{1-\theta}(|k+m|+|k|)^{1-\theta}}{(\vert k'+m\vert+\vert k'\vert)^{\mk k_1\gamma}(\vert k'+m\vert+\vert k'\vert)^{\mk k_2\gamma}\vert m\vert^{\mk k_3\gamma}}\\
                    \lesssim&\sum_{k\in \m Z}\frac{|k|^{2\sigma-\gamma}}{(|k+m|+|k|)^{\mk k_1\gamma-(1-\theta)}}\sum_{k'\in \m Z}\frac{|k'|^{2\sigma-\gamma}}{(|k'+m|+|k'|)^{\mk k_2\gamma-(1-\theta)}}\frac{|m|^{2\theta+2\sigma-\gamma}}{|m|^{\mk k_3\gamma}}\lesssim \frac{1}{|m|^{a'}}
				\end{align*}
				where 
                \begin{align*}
                  &\mk k_1+\mk k_2+\mk k_3=2, \quad 0\leq \theta\leq \gamma-1\\
                  &\mk k_1\gamma-(1-\theta)+\gamma>2\sigma+1,\quad \mk k_2\gamma-(1-\theta)+\gamma>2\sigma+1
                \end{align*}
            
	\end{proof}
\end{proposition}

\bibliographystyle{alpha} 
\bibliography{ref}
\end{document}